\def\CC{\mathbb{C}}
\def\FF{\mathbb{F}}
\def\GG{\mathbb{G}}
\def\LL{\mathbb{L}}
\def\QQ{\mathbb{Q}}
\def\RR{\mathbb{R}}
\def\R{\mathbb{R}}
\def\TT{\mathbb{T}}
\def\ZZ{\mathbb{Z}}
\def\calH{\mathcal{H}}
\def\calO{\mathcal{O}}
\def\cO{\mathcal{O}}
\def\calP{\mathcal{P}}
\def\bF{\mathbf{F}}
\def\bU{\mathbf{U}}
\def\bV{\mathbf{V}}
\def\bs{\mathbf{s}}
\newcommand\frH{\mathfrak{H}}
\newcommand\frc{\mathfrak{c}}
\newcommand\fre{\mathfrak{e}}
\newcommand\frg{\mathfrak{g}}
\newcommand\frk{\mathfrak{k}}
\newcommand\frl{\mathfrak{l}}
\newcommand\frn{\mathfrak{n}}
\newcommand\frp{\mathfrak{p}}
\newcommand\frq{\mathfrak{q}}
\newcommand\frr{\mathfrak{r}}
\newcommand\frt{\mathfrak{t}}
\newcommand\frz{\mathfrak{z}}
\newcommand\tilV{\widetilde{V}}
\newcommand\tilq{\widetilde{q}}
\newcommand{\usG}{\underline{\smash{G}}}
\newcommand\aff{\textup{aff}}
\newcommand\ellp{\textup{ell}}
\newcommand\ad{\textup{ad}}
\newcommand\Ad{\textup{Ad}}
\newcommand{\Bun}{\textup{Bun}}
\newcommand{\Corr}{\textup{Corr}}
\newcommand\Jac{\textup{Jac}}
\newcommand\Loc{\textup{Locsys}}
\newcommand\Perv{\textup{Perv}}
\newcommand{\Pic}{\textup{Pic}}
\newcommand{\QCoh}{\textup{QCoh}}
\newcommand{\IndCoh}{\textup{IndCoh}}
\newcommand{\Res}{\textup{Res}}
\newcommand\Spec{\textup{Spec}}
\newcommand{\Tr}{\textup{Tr}}
\newcommand{\Vect}{\textup{Vect}}
\newcommand{\Ker}{\textup{Ker}}
\newcommand{\Image}{\textup{Im}}
\newcommand{\Exp}{\textup{Exp}}
\newcommand\Aut{\textup{Aut}}
\newcommand\Hom{\textup{Hom}}
\newcommand\Mor{\textup{Mor}}
\newcommand\gl{\mathfrak{gl}}
\newcommand{\der}{\textup{der}}
\newcommand\nc{\newcommand}
\nc\on{\operatorname}
\nc\ol{\overline}
\nc\ul{\underline}
\nc\us[1]{\underline{\smash{#1}}}
\nc\oo{\infty}
\nc\Cone{\mathit{Cone}}
\nc\ssupp{\mathit{ss}}
\nc\risom{\stackrel{\sim}{\to}}
\nc\Sh{\mathit{Sh}}
\nc\un{\diamondsuit}
\nc\orient{\mathit{or}}
\nc\sing{\mathit{sing}}
\nc\MF{\on{MF}}
\nc\inthom{\mathit{Hom}}
\renewcommand{\t}{\mathfrak{t}}
\renewcommand{\S}{\mathbb{S}}
\newcommand{\x}{\times}
\newcommand{\C}{\mathbb{C}}
\newcommand{\Z}{\mathbb{Z}}
\newcommand{\N}{\mathcal{N}}
\newcommand{\g}{\mathfrak{g}}
\newcommand{\mfp}{\mathfrak{p}}
\newcommand{\mfl}{\mathfrak{l}}
\newcommand{\mfc}{\mathfrak{c}}
\newcommand{\modu}{\text{-mod}}
\newcommand{\B}{\textbf{B}}
\newcommand{\M}{\mathcal{M}}
\newcommand{\bfa}{\mathbf{a}}
\newcommand{\bfb}{\mathbf{b}}
\newcommand{\colim}{\textup{colim}}
\newcommand{\resp}{\text{resp. }}
\newcommand{\Stk}{\mathscr{S}\textup{tk}}
\newcommand{\Cat}{\mathscr{C}\textup{at}}
\newcommand{\Set}{\mathscr{S}\textup{et}}
\newcommand{\Cplx}{\mathscr{C}\textup{plx}}
\newcommand{\Ularrow}{\mathbin{\rotatebox[origin=c]{45}{$\Uparrow$}}}
\newcommand{\Urarrow}{\mathbin{\rotatebox[origin=c]{-45}{$\Uparrow$}}}
\newtheorem{thm}[equation]{Theorem}
\newtheorem{prop}[equation]{Proposition}
\newtheorem{lem}[equation]{Lemma}
\newtheorem{cor}[equation]{Corollary}
\theoremstyle{definition}
\newtheorem{defn}[equation]{Definition}
\newtheorem{eg}[equation]{Example}
\newtheorem{no}[equation]{Notation}
\newtheorem{rmk}[equation]{Remark}
\newtheorem{cons}[equation]{Construction}
\newtheorem{conj}[equation]{Conjecture}
\newtheorem{claim}[equation]{Claim}
\numberwithin{equation}{section}
\begin{document}

\nc{\Conn}{\mathrm{Conn}}

\nc{\fg}{\mathfrak g}
\nc{\fh}{\mathfrak h}

\nc{\cN}{\mathcal N}


\title[Uniformization of semistable bundles on elliptic curves]{Uniformization of semistable bundles\\ on elliptic curves}

\author{Penghui Li}

\author{David Nadler}
\address{YMSC\\Tsinghua University\\Beijing\\100084\\China}
\address{Department of Mathematics\\University of California, Berkeley\\Berkeley, CA  94720-3840}
\email{lipenghui@tsinghua.edu.cn}
\email{nadler@math.berkeley.edu}

\begin{abstract}
Let $G$ be a connected  reductive complex  algebraic group, and $E$ a complex elliptic curve. Let $G_E$ denote the  connected component of the trivial bundle in the stack of semistable $G$-bundles on $E$. We introduce a complex analytic uniformization of $G_E$ by adjoint quotients of reductive subgroups of the loop group of $G$. This can be viewed as a nonabelian version of the classical complex analytic uniformization $ E \simeq \mathbb{C}^*/q^{\mathbb{Z}}$. We similarly construct a complex analytic uniformization of $G$ itself via the exponential map, providing a nonabelian version of the standard isomorphism $\C^* \simeq \C/\Z$, and a complex analytic uniformization of $G_E$ generalizing the standard presentation $E \simeq \C/(\Z \oplus \Z \tau )$. Finally, we apply these results to the study of sheaves with nilpotent singular support. As an application to Betti geometric Langlands conjecture in genus 1, we define a functor from $Sh_\N(G_E)$ (the semistable part  of the automorphic category) to  $\IndCoh_{\check \N}(\Loc_{\check G} (E))$ (the spectral category).
\end{abstract}

\maketitle

\tableofcontents

\section{Introduction}

\subsection{Background}
Let $G$ be a connected complex reductive algebraic group, and $E$ a complex elliptic curve. 

The moduli of $G$-bundles on $E$ plays a distinguished role in representation theory, gauge theory and algebraic combinatorics (for example~\cite{BS,Sch} as the setting of the elliptic Hall algebra),
and its geometry has been the subject of a long and fruitful study. Atiyah~\cite{A} classified vector bundles on $E$ 
in terms of line bundles and their extensions. In particular, he showed rank $n$ vector bundles with trivial Jordan-Holder factors are in bijection with unipotent adjoint orbits in $GL(n)$, with the unique irreducible such vector bundle corresponding to the regular unipotent orbit. This initiated the organizing viewpoint  that  vector bundles on $E$ form an analogue of the adjoint quotient of $GL(n)$, where the ``eigenvalues" of a vector bundle are the line bundles appearing as its Jordan-Holder factors. In a beautiful series of papers, Friedman, Morgan and Witten~\cite{FMW1,FM1,FM2} extended this to any $G$, 
definitively describing the Jordan-Holder patterns 
and the geometry of the coarse moduli  of semistable bundles.  
Our focus here is the moduli stack of semistable bundles, and
specifically the construction of an analytic uniformization of   it by finite-dimensional subvarieties of the loop group of $G$.
We discuss  motivations and applications at the end of the introduction.

\subsubsection{Holomorphic loop group with twisted conjugation}
\label{introtwisted}
Thanks to complex function theory, the uniformization $E\simeq \CC^*/{q^{\ZZ}}$, with $|q|<1$, has been known since the 19th century.
Let $\Jac(E)$ be the Jacobian variety parameterizing degree zero line bundles on $E$. 
(Thanks to Serre's GAGA, one can equivalently consider algebraic or holomorphic  bundles.)
The Abel-Jacobi map $E\to \Jac(E)$, $x\mapsto \cO_E(x-x_0)$ is an isomorphism,
inducing a similar uniformization $\Jac(E)\simeq \CC^*/{q^{\ZZ}}$.

This  isomorphism also results from the following geometric observations. By the uniformization $E\simeq \CC^*/{q^{\ZZ}}$, holomorphic line bundles on $E$ are equivalent to equivariant holomorphic line bundles on $\CC^*$. Since every holomorphic line bundle on $\CC^*$ is trivializable, equivariant holomorphic line bundles are encoded by their equivariance up to gauge. Such data can be represented by elements of the holomorphic loop group $L_{\mathit{hol}}\CC^*$ up to $q$-twisted conjugacy. Within this identification,  one finds the uniformization of $\Jac(E)$ by the constant loops $\CC^* \subset L_{\mathit{hol}}\CC^*$ up to $q$-twisted conjugacy by the coweights
$\ZZ \simeq \Hom(\CC^*, \CC^*) \subset L_{\mathit{hol}}\CC^*$.

Now let $G_E:=\Bun_G^{ss,0}(E)$ denote the  connected component of the trivial bundle in the stack of semistable $G$-bundles on $E$.
By the uniformization $E\simeq \CC^*/{q^{\ZZ}}$,  isomorphism classes of $G$-bundles on $E$
are in bijection with $q$-twisted conjugacy classes in the holomorphic loop group $L_{\mathit{hol}}G$ (see for example~\cite{BG} where this is attributed
to Looijenga). We would like to enhance this to an analytic uniformization of $G_E$ by finite-dimensional subvarieties of  $L_{\mathit{hol}} G$. As a first attempt, we could take the constant loops $G\subset L_{\mathit{hol}} G$, but unfortunately, in general, the natural map $G/G\to G_E$ from the adjoint-quotient is neither surjective nor \'etale.
We will correct for both of these shortcomings by considering multiple charts together with their gluing; see the main results as described in Sect.~\ref{ss: intro main results}. 

\subsubsection{Connections on a circle with gauge transformation}
\label{introgauge}
Our arguments also apply to an easier situation to give a similar uniformization of $G/G$ in terms of (open subsets of) adjoint quotients of reductive Lie algebras. In this case, the role of $L_{hol}G$ with the action of twisted conjugation is replaced by the affine space of connections on a circle with the action of gauge transformation.

\subsection{Main results}\label{ss: intro main results}

Assume that $G$ is semisimple and simply-connected.  Let $T\subset G$ be a maximal torus, and denote by  $X_*(T) = \Hom(\CC^*, T)$ the coweight lattice.

The real affine space $\t_\R:= X_*(T) \otimes \R$ has a natural stratification by simplices coming from the hyperplanes 
$$H_{\alpha,n}:=\{ x \in \t_\R \;|\; \alpha(x)=n \}, \;\; \text{for } \alpha \text{ a root of } G, n \in \Z $$

Let $C$ be an alcove of $\t_\R$, i.e a top dimensional simplex.  There is a naturally defined category $\mathscr F_C$ of faces of $C$, whose objects are faces of $C$, i.e simplices in $\overline{C}$, and whose morphisms are given by the closure relation.

For any $J \in \mathscr{F}_C$, we have canonically associated finite-dimensional connected reductive subgroup $G_J\subset L_{hol} G$,
whose Lie algebra $\g_J \subset L_{hol} \g$ is spanned by $\t$ and those affine root spaces whose affine root vanishes on $J$. 

We introduce an analytic twisted adjoint-invariant open subset
 $\g^{se}_J \subset \g_J$ (\resp $G^{se}_J \subset G_J$)  of elements with ``small eigenvalues'' with respect to $J$. Roughly speaking, an eigenvalue in $\t$ (\resp $T$) is small with respect to $J$ if its real part (resp. $q$-part) lies in a simplex whose closure contains $J$ (for details, see definition before Proposition~\ref{seinet} for $G^{se}_J$, and Theorem~$\ref{covergroup} (6)$ for $\g^{se}_J$ ). Denote by $G_J/'G_J (\resp \g_J/'G_J)$ be the quotient stack w.r.t the twisted conjugation (Sect \ref{introtwisted}) (\resp the gauge action (Sect \ref{introgauge}, which we shall also refer to as a ``twisted" action)).

\begin{thm}[Theorem~$\ref{covergroup}(6)$, $\ref{colimitdiagram2}$]
\label{intro:colim}
There are isomorphisms of complex analytic stacks
$$(1)\xymatrix{
\colim_{J \in \mathscr{F}_C } 
\mathfrak{g}^{se}_J/'G_J  \ar[r]^-\sim &  G/G  
}
$$
$$(2)\xymatrix{
\colim_{J \in \mathscr{F}_C} 
G^{se}_J/'G_J  \ar[r]^-\sim & G_E  
}
$$
\end{thm}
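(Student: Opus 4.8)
The plan is to deduce Theorem~\ref{intro:colim} from Theorem~\ref{intro: thm cover} plus the explicit description of the overlaps of the charts as intersections indexed by vertices. The key point is that the cover in Theorem~\ref{intro: thm cover} is by open substacks, so $G/G$ (\resp $G_E$) is recovered as the colimit of the \v Cech nerve of that cover; one then identifies this \v Cech nerve, up to cofinality, with the small diagram indexed by faces of $C$. Concretely, I would proceed as follows.

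First I would record the descent statement for open covers of analytic stacks: if $\{U_i \to X\}$ is a jointly surjective family of open embeddings of complex analytic stacks, then $X \simeq \colim_{\emptyset \neq S \subset I} U_S$ where $U_S := \bigcap_{i \in S} U_i$ and the colimit is over the poset of nonempty finite subsets $S$ (equivalently, $X$ is the colimit of the full \v Cech nerve, which for a cover by open embeddings collapses to this poset colimit since all the higher self-intersections $U_i \times_X \cdots \times_X U_i$ equal $U_i$). Apply this to the cover of Theorem~\ref{intro: thm cover}, indexed by the set $I$ of vertices of $C$, with $U_{J_0} = \g^{se}_{J_0}/G_{J_0}$ (\resp $G^{se}_{J_0}/G_{J_0}$). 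This gives $G/G \simeq \colim_{\emptyset \neq S \subset I} U_S$.

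Next I would invoke the two displayed identities immediately preceding the theorem, namely $\g^{se}_J/G_J = \bigcap_{J_0 \in \text{vert}(J)} \g^{se}_{J_0}/G_{J_0}$, to identify the overlap $U_S$ for a set $S$ of vertices: if the vertices in $S$ span a face $J$ of $C$, then $U_S = \g^{se}_J/G_J$, and if they do not span a face of $\bar C$ then $U_S = \emptyset$. (The second case should follow because the "small eigenvalue" conditions relative to distinct vertices of $C$ that are not collinear-in-a-face impose incompatible constraints on the real part — \resp $q$-part — of an eigenvalue; this is where I would need to check the combinatorics of the simplicial decomposition of $\t_\R$, using that the closed faces containing a given vertex $J_0$ correspond to simplices in $\bar C$ through $J_0$.) Discarding the empty terms (which do not affect the colimit), the index poset collapses from "nonempty subsets of vertices of $C$" to "subsets of vertices spanning a face of $C$", which is precisely the poset of faces of $C$, i.e.\ $\mathscr F_C$ (with its closure-relation morphisms corresponding to the face maps $U_{J} \hookrightarrow U_{J'}$ for $J' \subset J$). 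Naturality of the chart identifications then upgrades this to an equivalence of diagrams, yielding $\colim_{J \in \mathscr F_C} \g^{se}_J/G_J \isom G/G$, and identically for $G_E$.

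The main obstacle I expect is the second case of the overlap computation: showing that when a set of vertices of $C$ does not span a common face, the corresponding intersection of "small eigenvalue" charts is genuinely empty (not merely small), so that the index poset really does collapse to $\mathscr F_C$ and no extra glue is needed. This requires a careful translation of the "small with respect to $J$" condition — real part (\resp $q$-part) of an eigenvalue lying in a simplex whose closure contains $J$ — into a statement about the arrangement of hyperplanes $H_{\alpha,n}$, and the observation that for two vertices $J_0, J_1$ of $C$ the set of simplices whose closure contains both is nonempty iff $J_0, J_1$ lie in a common face, in which case that set of simplices is exactly the faces containing the edge $J_0 J_1$. A secondary, more bookkeeping-type obstacle is making precise the ambient $2$-categorical descent for open covers of complex analytic \emph{stacks} (as opposed to schemes or spaces) and checking that the \v Cech nerve of a cover by open embeddings has the stated degeneracy; this is standard but should be stated carefully, perhaps by reduction to the case of a presentation by analytic spaces.
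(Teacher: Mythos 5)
Your proposal is correct in its essential structure and agrees with the paper's own route: establish open-cover \v Cech descent for stacks (the paper's Proposition~\ref{opendescent}), apply it to the cover by vertex charts from Theorem~\ref{intro: thm cover}, identify the multiple overlaps with the face charts, and recognize the resulting index poset as $\mathscr F_C$.

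One point you flag as the ``main obstacle'' is not actually an obstacle and should be dropped. Since $G$ is assumed almost simple and simply-connected, the alcove $C$ is a simplex, so \emph{every} nonempty subset of its vertices spans a face of $C$; the ``second case'' of non-face-spanning vertex sets simply does not occur, and the poset of nonempty subsets of vertices is already isomorphic to $\mathscr F_C$ by the functor $Ver$ (see Proposition~\ref{productofpolytope} and the remark immediately after it). No combinatorial analysis of incompatible small-eigenvalue constraints is needed.

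What does carry the real weight, and which you largely defer by citing the displayed intersection formula, is the verification that the fiber product over $G/G$ (resp.\ $G_E$) of the vertex charts is exactly the face chart $\g^{se}_J/G_J$ (resp.\ $G^{se}_J/G_J$) --- i.e.\ that the asserted equality holds as an identity of open substacks of the target, not merely as a statement about the sources. The paper handles this not by a direct pointwise argument but by reducing along the characteristic-polynomial map to the abelian statement for $T//W_\aff$ (Proposition~\ref{colimtorus}, patterned on Proposition~\ref{opencolimit}), and then transporting it via the Cartesian square of Proposition~\ref{pscartesian}. Your write-up should make explicit that this is where the content lies, and either reproduce that reduction or supply an alternative argument for the intersection identity inside $G/G$ or $G_E$.
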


One of our motivations for the above is to study $\oo$-categories of  complexes of sheaves with nilpotent singular support. To this end, we show by a propagation and an untwisting argument that:
\begin{prop}[Proposition~\ref{probagation}, \ref{untwisting}, \ref{group retraction}] For sheaves with nilpotent singular support, there are equivalences, compatible with the diagram $\mathscr{F}_C$:
$$
\xymatrix{
Sh_\N(\g_J/G_J) \ar[r]^-{\sim} & Sh_\N(\g_J/'G_J) \ar[r]^-\sim & Sh_\N(\g_J^{se}/'G_J)
}
$$
$$
\xymatrix{
Sh_\N(G_J/G_J) \ar[r]^-{\sim}  & Sh_\N(G_J/'G_J) \ar[r]^-\sim  & Sh_\N(G_J^{se}/'G_J)
}
$$
where $G_J/ G_J,\g_J/G_J$ are the quotient stacks by usual conjugations. 
\end{prop}
We deduce the following main result of the paper:
\begin{thm}[Theorem~\ref{groupcharactersheaf}]
\label{intro:lim}

There are equivalences of $\oo$-categories:
$$ (1)
\xymatrix{
Sh_\N (G/G)   \ar[r]^-\sim & \lim_{J \in\mathscr{F}_C^{op}} 
Sh_\N(\mathfrak{g}_J/ G_J)
}
$$
$$ (2)
\xymatrix{
Sh_\N (G_E)   \ar[r]^-\sim & \lim_{J \in\mathscr{F}_C^{op}} 
Sh_\N(G_J/G_J)
}
$$

\end{thm}

\begin{rmk}
	\label{remarkmainthm}
\begin{enumerate}
	\item In the limit, the arrow from $J$ to $J'$ is identified as parabolic restriction w.r.t the parabolic subalgebra $\frp^{J}_{J'}$ (\resp subgroup $P^J_{J'}$) defined by the relative position between $J$ and $J'$ (Definition~\ref{parabolicfacet}). Similarly, the (higher) commutativities are given by (higher) transitivity isomorphisms between parabolic restrictions.
	
	\item $Sh_{\mathcal{N}}(\mathfrak{g}/G)$ is by definition the category of character sheaves on $\g$. Fourier transform gives an equivalence $\TT: Sh_{\mathcal{N}}(\mathfrak{g}/K) \xrightarrow{\simeq} Sh(\N/G)$. The latter category is studied in the generalized Springer theory initiated in \cite{Lu7}. For characteristic 0 coefficients, $Sh(\N/G)$ is explicitly calculated in \cite{Rid,RR2}. For characteristic $p$ coefficients, the abelian category $\Perv(\N/K)$ is the subject of modular generalized Springer theory \cite{AHJR}.
	
	\item By \cite{MV}, $Sh_\N(G/G)$ agrees with the category of character sheaves introduced by Lusztig, which serves as a geometric avatar for the character theory of the finite group of Lie type $G(\FF_q)$.
	
	\item The group $G_J$ is explicitly known:  let $C$ be the standard alcove, and denote by $S_0$ the set of affine simple roots, then there is natural identification between $\mathscr{F}_C^{op}$ and $\mathscr{P}^{\circ}(S_0):=$ the category of proper subset of $S_0$, via $J \mapsto \{ \alpha \in S_0: \alpha(J)=0 \}$. Under this identification, $G_J$ is generated by the one parameter subgroup corresponding to the roots in $J$. Hence the Dynkin diagram of $G_J$ is $J$ viewed as a subdiagram of the affine Dynkin diagram of $G$. In particular, when $J$ is a vertex of the alcove, $G_J$ is isomorphic to either $G$ or a Pseudo-Levi subgroup of $G$ (a connected maximal rank reductive subgroup of $G$ that is not contained in any parabolic subgroup), and all Pseudo-Levi subgroups arise in this way, c.f Borel–de Siebenthal \cite{BDS}.

\item 

Recall that the conjugation actions in Theorem~\ref{intro:colim} are twisted (which depends on $q$). Nevertheless, in the last theorem, the conjugations are the usual (untwisted) ones. To achieve this, one needs to untwist all the conjugations compatibly with the diagram $\mathscr{F}_C^{op}$. (This essentially comes down to the fact that the simplices $J$'s are contractible, and the nilpotent cone in $\g_J/G_J$ (\resp  $G_J/G_J$) is constant along the direction of $J$.)
 Hence the right hand sides of Theorem~\ref{intro:lim} are completely Lie theoretic, and in particular, the right hand side in (2) is irrelevant to the elliptic curve $E$.

\item For a torus $T$, let $Loc(T/T)$ (\resp $Loc(T_E)$) be the $\oo$-category of local systems on the adjoint quotient
(\resp  on the degree zero component of $T$-bundles on $E$). 
Theorem~\ref{intro:lim} can be thought of as an analogy for a simple, simply-connected group, of the statement:
$$\xymatrix{ Loc(T/T) \ar[r]^-{\sim} & \lim_{BX_*(T)} Loc(\t/ T)} $$
$$\xymatrix{ Loc(T_E) \ar[r]^-{\sim} & \lim_{BX_*(T)} Loc(T/ T)} $$
where $BX_*(T)$ denotes the classifying space of the coweight lattice (viewed as an $\infty$-groupoid), the object in $BX_*(T)$ goes to $Loc(\t/T)$
(\resp $Loc(T/T)$), and all (higher) morphisms go to the identity.
Indeed, the first equivalence (and similarly the second one) can be seen as follows: a local systems on $T/T$ is a local system on $pt/T$ together with $X_*(T)$-action by monodromies, these data is equivalent to an object in $\lim_{BX_*(T)}(Loc(pt/T))$, which is equivalent to an object in $\lim_{BX_*(T)}(Loc(\frt/T))$, since $\frt$ is contractible.

Combining the statements for a simple, simply-connected group and a torus, one can obtain a general statement for any reductive group.
\end{enumerate}
\end{rmk}

\begin{eg}$(G=SL_2)$ Theorem~\ref{intro:lim} (1) gives:
  \[  Sh_\N(SL_2/SL_2) = 
\lim \{
\xymatrix{
 Sh_\mathcal{N}({\left[\begin{smallmatrix} 
\C & \C \\
 \C & \C 
 \end{smallmatrix}\right]}/\sim) \ar[r]_{\left[\begin{smallmatrix} 
* & * \\
 0 & * 
 \end{smallmatrix}\right]} &  Sh_{\mathcal{N}}({\left[\begin{smallmatrix} 
\C & 0 \\
0 & \C 
 \end{smallmatrix}\right]}/\sim)  & Sh_\mathcal{N}({\left[\begin{smallmatrix} 
\C & \C z \\
 \C z^{-1} & \C 
 \end{smallmatrix}\right]}/\sim) \ar[l]^{\left[\begin{smallmatrix} 
* & 0 \\
 * & * 
 \end{smallmatrix}\right]}
} \}
\]
where the matrices stand for the corresponding Lie subalgebras of $L\mathfrak{sl}_2$, and $/\sim$ is shorthand for taking the quotient by the corresponding adjoint action. The arrows ``$\rightarrow$" in the diagram are parabolic restrictions with respect to the indicated parabolic subalgebras. 
\end{eg}

\begin{eg} $(G=SL_3)$ 
Theorem~\ref{intro:lim} (1) gives

$Sh_\N(SL_3/SL_3)=\lim$
$$ 
\xymatrixrowsep{0.31in}
\xymatrixcolsep{-0.15in}
\xymatrix{
     &    &  {Sh_\N(\left[\begin{smallmatrix} 
\C & \C z & \C z\\
 \C z^{-1} & \C & \C \\
\C z^{-1} & \C & \C
 \end{smallmatrix}\right] /\sim)} \ar[dddl]_{\left[\begin{smallmatrix} 
* &  0 & 0 \\
* &  * & * \\
* & * & *
 \end{smallmatrix}\right]} \ar[dddr]^{\left[\begin{smallmatrix} 
* &  0 & * \\
 * &  * &  * \\
* & 0 & *
 \end{smallmatrix}\right]} \ar[dddd]|-{\left[\begin{smallmatrix} 
* &  0 & 0\\
 * & * & * \\
* & 0 & *
 \end{smallmatrix}\right]}&    &        \\
        &    &    &   &     \\
       &    &    &   &     \\
  & {Sh_\N(\left[\begin{smallmatrix} 
\C & 0  &0  \\
0  & \C & \C \\
0 & \C & \C
 \end{smallmatrix}\right] /\sim)}\ar[dr]|-{{\left[\begin{smallmatrix} 
* & 0 & 0\\
 0 & * & * \\
0 & 0 & *
 \end{smallmatrix}\right]}} \ar@{}[ru]|-{\Rightarrow} \ar@{}[dd]|-{\Downarrow}  &   & {Sh_\N(\left[\begin{smallmatrix} 
\C & 0 & \C z\\
0  & \C & 0 \\
\C z^{-1} & 0 & \C
 \end{smallmatrix}\right] /\sim)} \ar[dl]|-{\left[\begin{smallmatrix} 
* & 0 & 0 \\
0 & * & 0 \\
* & 0 & *
 \end{smallmatrix}\right]} \ar@{}[ul]|-{\Leftarrow} \ar@{}[dd]|-{\Downarrow} &     \\
     &    &  {Sh_\N(\left[\begin{smallmatrix} 
\C & 0 & 0\\
 0& \C & 0 \\
0 & 0 & \C
 \end{smallmatrix}\right] /\sim)} &    &        \\
     &    &    &    &        \\
{Sh_\N (\left[\begin{smallmatrix} 
\C& \C & \C\\
 \C & \C & \C \\
\C & \C & \C
 \end{smallmatrix}\right] /\sim)} \ar[uuur]^{\left[\begin{smallmatrix} 
* & * & * \\
 0 & * & * \\
0 & * & *
 \end{smallmatrix}\right]} \ar[rr]_{\left[\begin{smallmatrix} 
*& * & *\\
 * & * & * \\
0 & 0 & *
 \end{smallmatrix}\right]} \ar[uurr]|-{{\left[\begin{smallmatrix} 
*& * & *\\
 0 & * & * \\
0 & 0 & *
 \end{smallmatrix}\right]}}   &    &  {Sh_\N(\left[\begin{smallmatrix} 
\C & \C  & 0 \\
 \C  & \C & 0 \\
0 & 0& \C
 \end{smallmatrix}\right] /\sim)} \ar[uu]|-{\left[\begin{smallmatrix} 
*& *& 0\\
 0 &* & 0 \\
0 & 0 & *
 \end{smallmatrix}\right]}
\ar@{}[ul]|-{\Ularrow}  \ar@{}[ur]|-{\Urarrow}
  &      & {Sh_\N(\left[\begin{smallmatrix} 
\C & \C  & \C z\\
 \C  & \C & \C z\\
\C z^{-1} & \C z^{-1} & \C
 \end{smallmatrix}\right] /\sim)}  \ar[uuul]_{\left[\begin{smallmatrix} 
* & * & * \\
 0 & * & 0 \\
* & * & *
 \end{smallmatrix}\right]} \ar[ll]^{{\left[\begin{smallmatrix} 
* & * & 0\\
 * & * & 0 \\
* & * & *
 \end{smallmatrix}\right]}} \ar[uull]|-{\left[\begin{smallmatrix} 
* & * & 0 \\
 0 & * & 0 \\
* & * & *
 \end{smallmatrix}\right]}
}
$$
where the 2-arrows ``$\Rightarrow$" in the diagram are the transitivity natural isomorphisms between parabolic restrictions. 
\end{eg}

\begin{rmk}
The theorem is compatible with Springer theory in the sense that there is a commutative diagram:
$$
\xymatrix{ 
\C [W_{\aff}] \modu^\heartsuit \ar[d] \ar[r]^-\sim & \lim_{J \in \mathscr{F}_C} 
\C [W_J] \modu^\heartsuit  \ar[d]\\
\Perv_\N (G/G) \ar[d]  \ar[r]^-\sim & \lim_{J \in \mathscr{F}_C} 
\Perv_\N(\mathfrak{g}_J/G_J) \ar[d] \\
Sh_\N (G/G)   \ar[r]^-\sim & \lim_{J \in \mathscr{F}_C} 
Sh_\N(\mathfrak{g}_J/G_J)
}
$$
where $W_J$ is the Weyl group of $G_J$ (which equals the centralizer/stabilizer of $J$ in the affine Weyl group $W_{\aff}:= W \ltimes X_*(T)$); $A\modu^\heartsuit$ is the abelian category of $A$-modules. $\Perv(X)$ denotes the category of perverse sheaves on $X$; the first and second limit is taken inside $\Cat$ the category of  categories, and the last limit is taken inside $\Cat_\oo$ the category of $\oo$-categories. Note that the first isomorphism follows from the Coxeter presentation of $W_\aff$, hence Theorem~\ref{intro:lim} can be thought of as a Coxeter presentation of character sheaves. One can also upgrade the first line to an $\oo$-categorical statement, see \cite{Li2} for details.

We also define more general descent diagrams for general reductive groups. 
 See Section~
\ref{descentcategory} below for more details.
\end{rmk}

\subsection{Applications}

\subsubsection{Global Fourier/Radon transform}

There are natural integral transforms on the each term in the right hand side of Theorem~\ref{intro:lim}. Namely, the Fourier transform $\TT_J:Sh_\N(\g_J/G_J) \to Sh(\N_{\g_J}/G_J)$ in (1), the Radon transform $R_J: Sh_\N(G_J/G_J) \to Sh(B_J \backslash G_J/B_J)$ and the inverse Radon transform $\check{R}_J: Sh_\N(B_J \backslash G_J/B_J) \to Sh(G_J/_{ad}G_J) $ in (2), where $B_J := P^J_C$ is a Borel subgroup of $G_J$. The integral transforms are compatible with the diagram, hence pass to the (co)limit: 
$$ \TT: Sh_\N(G/G) \xrightarrow{\sim}  \lim_{J \in \mathscr F_C} Sh(\N_{\g_J}/G_J)  $$
$$ R: Sh_\N(G_E) \to \lim_{J \in \mathscr F_C} Sh(B_J\backslash G_J/B_J) $$
$$  \check{R}: \colim_{J \in \mathscr F_C}  Sh(B_J\backslash G_J/B_J) \to  \colim_{J \in \mathscr F_C} Sh_\N(G_J/G_J)  \simeq  Sh_\N(G_E)   $$
 We refer to these functors as the global Fourier/(inverse) Radon transform. Note that in the last row, we identify limit in $\oo$-categories with colimit in $\infty$-categories (with continuous functors), see \cite[Lemma 1.3.3]{Gai}. \\
 
 \paragraph{}\textit{Sheaf theoretic Kirillov orbit method.} The Kirillov orbit method is a heuristic method in representation theory that states every irreducible character of a Lie group should be given by the Fourier transform of an orbital distribution. The global Fourier transform  $\TT: Sh_\N(G/G) \xrightarrow{\sim}  \lim_{J \in \mathscr F_C} Sh(\N_{\g_J}/G_J)$ gives a sheaf theoretic realization of this heuristic: any character sheaf is given by a compatible system of (nilpotent) orbital sheaves. The appearance of the compatible  system indexed by $\mathscr{F}_C$ reflects the feature of sheaves: a sheaf is determined by its value on an open cover (indexed by $\mathscr{F}_C$ in this case) together with gluing, while an analytic function can be determined by its value on a single open subset. \\
 
 \paragraph{} \textit{Spectral description of character sheaves.}  The category $Sh(\N_{\g_J}/G_J)$ is described by the generalized Springer correspondence \cite{Lu7,RR2}. Based on that and the global Fourier transform,  the first author obtains the following spectral description of the category $Sh_\N(G/G)$ of character sheaves on $G$: denote $W_\aff^J:=N_{W_\aff} (W_J)/W_J$ , $\Lambda_J \subset W_\aff^J$ the subgroup of translations, $W^J:=W^J_\aff/\Lambda_J$, $\check S_J:=$ dual torus of $\Lambda_J \otimes \CC^*$ and $c_J$ the number of cuspidal sheaves on $\N_{\g_J}/G_J$. Denote $\mathcal{L}X:=X \times_{X \times X} X$ the derived loop space of $X$.
 Put $\widehat{G}:=\coprod_{J \in \mathscr{F}_C} ((\mathcal{L}\check S_J)/W^J)^{\coprod c_J}$, and $\QCoh(\widehat{G})$ the $\oo$-category of quasi-coherent sheaves on $\widehat{G}$.
\begin{thm}[\cite{Li2}]
	\label{charactersheavesspectral}
	  There is an equivalence of $\infty$-categories:
	$$Sh_\N(G/G) \simeq \QCoh(\widehat{G})$$
\end{thm}

 \paragraph{} \textit{Global Radon transform and Betti Geometric Langlands.} Let us first recall the Betti geometric Langlands conjecture proposed by Ben-Zvi--Nadler. Let $B$ be a Borel subgroup of $G$ and denote by $N$ its unipotent radical. Let $\check G$ be the Langlands dual group of $G$, 
 Let $\Sigma$ be a compact Riemann surface, and denote by $\Bun_G(\Sigma):=Map(\Sigma,BG)$ the derived moduli stack of holomorphic principal $G$-bundles on $\Sigma$, and by $\Loc_{\check G}(\Sigma):=Map(\Pi \Sigma,B \check G)$ the derived moduli stack of $\check G$ local system on $\Sigma$, where $\Pi \Sigma$ is the fundamental $\infty$-groupoid of $\Sigma$. Denote by $\IndCoh_{\check \N}(\Loc_{\check{G}}(\Sigma))$ the dg category of ind-coherent sheaves on $\Loc_{\check{G}}(\Sigma)$ with singular support in the global nilpotent cone $\check \N \subset T^{*,-1}\Loc_{\check{G}}(\Sigma)$ (c.f Arinkin-Gaitsgory \cite{AG}).

\begin{conj}[{\cite[Conjecture 1.5]{BZN16}}] 
	\label{bettigl}
	There is an equivalence of dg-categories:
	$$\LL_G: Sh_\N(\Bun_G(\Sigma)) \simeq  \IndCoh_{\check \N}(\Loc_{\check{G}}(\Sigma))$$
\end{conj}

\begin{rmk}
	We expect that Theorem~\ref{charactersheavesspectral} can be interpreted as semistable part of Conjecture~\ref{bettigl} for $\Sigma =$ nodal genus 1 curve. 
\end{rmk}	
We denote by $I$ the Iwahori subgroup corresponding to the alcove $C$, and by $N_J (\resp I_0)$ the unipotent radical of $B_J (\resp I)$. Denote by  $\check B$ the dual Borel subgroup of $\check G$, and by $\check N$ its unipotent radical. For a group $T$ acting on $X$,  denote by $Sh_{T \times T}(X) \subset Sh(X)$ the full subcategory of sheaves which are locally constant on $T$ orbits. Let $\Tr(A):=A \otimes_{A \otimes A^{op}} A$ be the trace of a monoidal category $A$. 

Now for $\Sigma=E$, we expect to define a functor:
$$\xymatrix{ \LL_{G}^{ss} :Sh_{\N}(G_E)  \ar[r]  & \IndCoh_{\check \N}(\Loc_{\check G} (E))  }$$ by the compositions of following functors:
\begin{enumerate}
	\item $Sh_{\N}(G_E) \simeq \lim Sh_\N(G_J/G_J)$. \\This is our main theorem. 
	
	\item $\lim Sh_{\N} (G_J/G_J) \simeq \colim \; Sh_{\N} (G_J/G_J) $. \\ Again, this is because we can identify limit in $\oo$-categories with colimit in $\infty$-categories (with continuous functors).
	
	\item (expected) $\colim \; Sh_{\N} (G_J/G_J)  \simeq  \colim \; \Tr(Sh_{T \times T}(N_J\backslash G_J/N_J))$. 
	\\ Denote by $Sh^u_{T \times T}(-) \subset Sh_{T \times T}(-)$ the full subcategory of sheaves with $T \times T$ acting by unipotent monodromies. Define the unipotent character sheaves $Sh_\N^u(G/G) \subset Sh_\N(G/G)$ to be the image of $Sh^u_{T \times T}(N\backslash G/N)$ under inverse Radon transform. This expected equivalence is known for unipotent monodromies by Ben-Zvi--Nadler \cite[Theorem 1.8]{BZN09}: $Sh_\N^u(G_J/G_J) \simeq \Tr (Sh^u_{T \times T}(N_J\backslash G_J/N_J))$ and it is expected for general monodromies \cite[Expectation 1.23]{BZN09}. 

	\item $\colim \; \Tr(Sh_{T \times T}(N_J\backslash G_J/N_J)) 
	\to \Tr(Sh_{T \times T}(I_0 \backslash LG/I_0))$. 
     \\This functor is induced by the monoidal functors $ Sh_{T \times T}(N_J\backslash G_J/N_J) \to \Tr(Sh_{T \times T}(I_0 \backslash LG/I_0))$.
	
	\item (expected) $\Tr(Sh_{T \times T}(I_0 \backslash LG/I_0)) 
	\simeq  \Tr(\IndCoh(\check B /\check B \times_{\check G/\check G} \check B /\check B ))$. \\
	This is known for unipotent monodromies by Bezrukavnikov \cite[Theorem 1 (2)]{Bez}:
	$Sh^u_{T \times T}(I_0 \backslash LG/I_0) \simeq \IndCoh^u(\check B /\check B \times_{\check G/\check G} \check B /\check B )$, where $\IndCoh^u(\check B /\check B \times_{\check G/\check G} \check B /\check B ) \subset \IndCoh(\check B /\check B \times_{\check G/\check G} \check B /\check B )$ is the full subcategory of objects supported on $\check N /\check N \times_{\check G/\check G} \check N /\check N$.  It is expected for general monodromies  \cite[Conjecture 58]{Bez}.
	
	\item  $\Tr(\IndCoh(\check B /\check B \times_{\check G/\check G} \check B /\check B )) \simeq \IndCoh_{\check \N}(\Loc_{\check G} (E)) $. 
	\\ This is proved by Ben-Zvi--Nadler--Preygel \cite[Theorem 4.4]{BZNP}. Denote by $\Loc^u_{\check G} (E) \subset \Loc_{\check G} (E)$ be the substack of local systems whose first monodromies are unipotent, and by \\
	${\IndCoh^u_{\check \N}(\Loc_{\check G} (E)) \subset \IndCoh_{\check \N}(\Loc_{\check G} (E))} $
	the full subcategory of sheaves supported on $\Loc^u_{\check G} (E) $. Inside the equivalence above, we have the unipotent version $\Tr(\IndCoh^u(\check B /\check B \times_{\check G/\check G} \check B /\check B )) \simeq \IndCoh^u_{\check \N}(\Loc_{\check G} (E))$.

\end{enumerate}

In particular, denote by $Sh^u_{\N}(G_E) :=\lim Sh^u_\N(G_J/ G_J) \subset Sh_\N(G_E)$ the full subcategory corresponding to unipotent character sheaves, then (3) and (5) above are known for unipotent monodromies, so we have defined a functor 
$\LL_G^{ss,u}: Sh^u_{\N}(G_E) \to  \IndCoh_{\check \N}(\Loc_{\check G} (E)^u)$.
Under Conjecture~\ref{bettigl}, we expect the following diagram to commute:
$$\xymatrix{  Sh^u_\N(G_E) \ar[r]^-{\LL^{ss,u}_G} \ar@{^{(}->}[d] &  \IndCoh_{\check \N}(\Loc_{\check G} (E)^u) \ar@{^{(}->}[d] \\
	Sh_\N(G_E) \ar[r]^-{\LL^{ss}_G} \ar@{^{(}->}[d]^{j_!} &  \IndCoh_{\check \N}(\Loc_{\check G} (E)) \ar@{=}[d] \\
	Sh_\N(\Bun_G(E))   \ar[r]^-{\LL_G}_-{\sim}  &    \IndCoh_{\check \N}(\Loc_{\check G} (E))
}$$
for $j: G_E \to \Bun_G(E)$ the open embedding. Hence it is natural to expect:

\begin{claim}
 $\LL_G^{ss,(u)}$ is fully-faithful.
\end{claim}	
Assuming the expected equivalences (3) and (5) above, this claim is equivalent to a statement about affine Hecke categories:
\begin{claim}
	\label{fullyfaithful}
The natural functor
$$\xymatrix{\colim_{J \in \mathscr{F}_C}  \Tr(Sh_{T \times T}(N_J\backslash G_J/N_J)) \ar[r] &
 \Tr(Sh_{T \times T}(I_0 \backslash LG/I_0))} $$
is fully-faithful.
\end{claim}	

We shall prove these claims in a future paper. An analogous statement to Claim~\ref{fullyfaithful} for Weyl group has been proved in \cite{Li19}.

\begin{rmk}
	The word global Fourier/Radon transform refers to being global on the moduli stack $\Bun^{(ss)}_G(\Sigma)$, rather than on the Rieman surface $\Sigma$. Nevertheless, we expect that localizing on moduli stack is related to taking nearby cycles along degenerations of Riemann surface. This fits into the general framework of \cite[Conjecture 4.15]{BZN16} where they proposed an approach to Conjecture~\ref{bettigl} via gluing (i.e taking (co)limits) from degenerations. 
	
\end{rmk}

\subsubsection{Topological nature of $Sh_\N(G_E)$}

We can also define the uniformizations universally over the moduli of elliptic curve $\M_{1,1}$, this gives a natural notion of parallel transport:

\begin{prop}[Corollary~\ref{locallyconstant}]
The $\oo$-category $Sh_\N(G_E)$ of complexes of sheaves with nilpotent singular support is locally constant over the moduli space of elliptic curves $\mathcal{M}_{1,1} $.
\end{prop}
\begin{rmk}
		
 As in Remark~\ref{remarkmainthm} (3), the right hand side in Theorem~\ref{intro:lim} (2) is irrelevant to the elliptic curve $E$. However, the equivalence there  depends on a choice of basis in $H^1(E,\mathbb{Z})$ (and a point in $E$). Hence Theorem~\ref{intro:lim} (2) does NOT imply $Sh_\N(G_E)$ is constant over $\mathcal{M}_{1,1} $.  It is only constant after making the choice of basis (i.e. after a base change to  the upper half plane $\mathcal{H}$). And in fact the resulting sheaf of categories on $\mathcal{M}_{1,1}$ has interesting monodromy. For $G=SL_n$, this sheaf contains the monodromy of the $SL(2,\Z)$ action on $E[n]/S_n$, by considering the cuspidal objects, where $E[n]$ is the set of $n$-torsion points of $E$.
\end{rmk}

With modest further effort, and similar  applications of the above results, one can extend the corollary 
to  the $\oo$-category $Sh_\cN(\Bun_G(E))$ of complexes of sheaves with nilpotent singular support on the entire moduli
of all $G$-bundles on $E$.
This category contains the Hecke eigensheaves of the geometric Langlands program, and we expect it to offer also a theory of affine character sheaves. Furthermore, under Langlands duality/mirror symmetry, it is expected to correspond to a derived category of coherent sheaves on the commuting stack. (Note that the commuting stack, and hence its coherent sheaves as well, is evidently a topological invariant, only depending on the fundamental group of the elliptic curve.) This is in turn the subject of beautiful recent developments
(Schiffmann-Vasserot \cite{SV1,SV2,SV3} on Macdonald polynomials and double affine Hecke algebras;
Ginzburg \cite{Gi4} on Cherednik algebras and the Harish Chandra system)
and in particular its role as affine character sheaves was established in~\cite{BZNP}.

\subsubsection{Dependence of restriction functor on parabolic subgroups}

During the proof of our main theorem, we also obtain the following result:

\begin{cor}
Let $P_1,P_2 \subset G$ be two parabolic subgroups of a $G$ with the same Levi $L \subset G$. Then there is a (non-canonical) natural isomorphism between the parabolic restrictions $$\xymatrix{\Res_{P_1} \simeq \Res_{P_2}:Sh_\N(G/G) \ar[r] & Sh_\N(L/L)  } $$
\end{cor}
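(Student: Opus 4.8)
The plan is to deduce this directly from Theorem~\ref{intro:lim}~(1). Fix the alcove $C$ and recall that $Sh_\N(G/G) \simeq \lim_{J \in \{\text{faces of }C\}^{op}} Sh_\N(\g_J/_{ad}G_J)$, where the structure maps of the diagram are parabolic restriction functors along the parabolic subalgebras determined by the relative position of faces (Remark~\ref{remarkmainthm}~(1)). The key observation is that the Levi $L$ corresponds to a subset $I$ of the affine simple roots (equivalently, to a face $J_L$ of $C$ on which exactly the affine roots in $I$ vanish), and the two parabolics $P_1, P_2 \supset L$ correspond to two different ways of completing $I$ to define a parabolic — concretely, to two different faces $J_1, J_2$ of $C$ with $J_L \subset \bar J_i$, or more precisely to two chambers of the face lattice adjacent to $J_L$. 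In either case $Sh_\N(L/L)$ is (a twist/untwisting of) $Sh_\N(\g_{J_L}/_{ad}G_{J_L})$, and $\Res_{P_i}$ is, up to the compatible untwisting of Remark~\ref{remarkmainthm}~(2), the structure map of the diagram from the vertex at $C$ (or from a common refining face) down to $J_L$ factoring through the face corresponding to $P_i$.

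First I would make precise the dictionary between parabolics $P \supset L$ in $G$ and the combinatorial data in the face lattice $\mathscr{F}_C$: a parabolic with Levi $L$ is the same as a choice of which roots outside $\Phi_L$ go into the unipotent radical, i.e. a choice of a connected component / an adjacency. Since $P_1$ and $P_2$ have the same Levi, the faces $J_1, J_2$ they determine both lie over $J_L$ and both admit $J_L$ in their closure; in particular there is a chain $C \supset \cdots \supset J_i \supset \cdots \supset J_L$ for $i=1,2$. Second, using the transitivity isomorphisms between parabolic restrictions that witness the (higher) commutativity of the diagram in Theorem~\ref{intro:lim} (Remark~\ref{remarkmainthm}~(1)), I would identify $\Res_{P_i}\colon Sh_\N(G/G)\to Sh_\N(L/L)$ with the composite structure functor $Sh_\N(\g_C/_{ad}G_C) \to Sh_\N(\g_{J_i}/_{ad}G_{J_i}) \to Sh_\N(\g_{J_L}/_{ad}G_{J_L})$ precomposed with the equivalence $Sh_\N(G/G)\isom Sh_\N(\g_C/_{ad}G_C)$ coming from the chart at the alcove $C$ (which is an open chart whose restriction is an equivalence on $\N$-sheaves, by the results preceding Theorem~\ref{intro:lim}). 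Third, since both composites are structure maps of the \emph{same} diagram landing at the \emph{same} object $J_L$, and the diagram is a genuine (homotopy-coherent) limit diagram, the two functors are canonically — hence a fortiori non-canonically — isomorphic: any two morphisms in the face poset with the same source and target are equal, so the corresponding structure functors agree up to the coherence data built into $\textbf{g}^*$.

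The main obstacle I anticipate is the bookkeeping in the second step: making sure that the face $J_i$ attached to $P_i$ is genuinely the one through which the $i$-th parabolic restriction factors, and that the untwisting of Remark~\ref{remarkmainthm}~(2) is performed compatibly so that one really lands in the \emph{same} model of $Sh_\N(L/L)$ for $i=1$ and $i=2$ (a priori $P_1$ and $P_2$ could present $L/L$ via different faces in the same $W_{\aff}^{ex}$-orbit, and one must move between them by an element of the diagram's symmetry, which is exactly the source of the non-canonicity). A secondary subtlety is that $J_1, J_2$ need not both lie in the \emph{same} alcove's face lattice as stated, so one may first have to translate by $W_{\aff}^{ex}$ to bring both into $\mathscr{F}_C$; this is harmless since the diagram in Theorem~\ref{intro:colim}/\ref{intro:lim} is, before passing to the quotient $\mathscr F_C$, defined $W_{\aff}^{ex}$-equivariantly. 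Once the dictionary is set up, the actual isomorphism is formal from the limit description, requiring no computation with sheaves at all.
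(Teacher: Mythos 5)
Your plan does not work as stated, and the reason is worth spelling out because it exposes a real limitation of the descent diagram. First, a notational slip: you write of ``the equivalence $Sh_\N(G/G)\isom Sh_\N(\g_C/_{ad}G_C)$,'' but the alcove $C$ is the top-dimensional face, so $\g_C=\t$ and $G_C=T$; no such equivalence exists. You presumably mean the vertex $J_0$ of $C$ for which $G_{J_0}=G$. But even after this correction there is a genuine gap: the limit diagram in Theorem~\ref{intro:lim}(1) is indexed by $\mathscr{F}_C$, a poset, and its structure maps are the \emph{specific} parabolic restrictions $\Res_{\mfp^{J}_{J'}}$ determined by the alcove geometry. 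For a fixed standard Levi $L$ there is a \emph{unique} face $J_L$ in the star of $J_0$ with $G_{J_L}=L$, and a \emph{unique} arrow $J_0\to J_L$ in $\mathscr{F}_C$, whose value under $\textbf{g}^*$ is $\Res_{\mfp^{J_0}_{J_L}}$ — this is one parabolic $P$ with Levi $L$, not two. The other parabolic simply does not occur as a structure map of this diagram; there are no two distinct morphisms $J_0\to J_L$ in a poset to compare. Your third step, which appeals to two composites into the same target being identified by coherence data, therefore has nothing to latch onto.

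You anticipated part of the difficulty and propose to repair it via $W_\aff^{ex}$-equivariance: translate the face seen by $P_2$ back into $\mathscr{F}_C$. This is closer to the paper's general machinery (Section~\ref{descentcategory}), but it is not free: one would need to check that the $2$-morphism data $\eta$ of the equivariant diagram really produces an isomorphism $\Res_{P_1}\simeq\Res_{P_2}$ after untwisting, and that every pair $P_1,P_2$ with the same Levi is in fact related by an element of $W_\aff^{ex}$ preserving $L$ — neither claim is established in your sketch. The paper's actual proof sidesteps all of this by a mechanism you did not use. The point of Section~\ref{basedres} (and its Lie-group analogue, Definition-Proposition~\ref{baserestrictiongroup}) is to introduce a restriction functor $R_U:Sh_\N(G/G)\to Sh_\N(L/L)$ that depends only on a \emph{base open subset} $U\subset L^r$ and not on a parabolic at all, and then to prove in Proposition~\ref{baseparabolicgroup}(1) that $\Res_P\simeq R_U$ for \emph{every} parabolic $P\supset L$. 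The corollary then follows instantly from $\Res_{P_1}\simeq R_U\simeq\Res_{P_2}$; the non-canonicity is exactly the choice of $U$ (equivalently, of a point of $\mfc^r_\mfl$). This is a local geometric argument — on $q^{-1}(U)\subset\frp$ the projection $q$ to $\mfl$ is an isomorphism, so $\Res_\frp$ loses its dependence on $\frp$ after restricting, and the contraction Lemma~\ref{contraction} shows that restricting to $U$ loses no information on $\N$-sheaves — and it does not pass through the global descent theorem at all. If you want to salvage a descent-based argument, you would essentially have to re-prove Proposition~\ref{baseparabolicgroup}(1) along the way, at which point the limit diagram is unnecessary.
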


Such statements has been proved for orbital sheaves on Lie algebras in \cite{Mi}, and for perverse character sheaves on Lie groups in \cite{Gi2}. During the proof of our main theorem, we define a restriction functor $R_U: Sh_\N(G/G) \to Sh_\N(L/L)$ depending on a choice of retractable subset $U \subset L$. The idea is that both parabolic restriction functors are isomorphic to the unique extension of the pullback along $U/L \to G/G$. Hence each choice of $U$ gives such a natural isomorphism. In fact, the space of choices of such $U$ is connected but not contractible. This is explained in detail in Section~\ref{sectiondependence}.

\subsection{Outline of the argument in an example} 
\label{firsteg}
To illustrate the ideas, we give a first example in its most plain form.

Let $G=SL_2$, $\g=\mathfrak{sl}_2$, $T,\t$ the diagonal matrices in $G,\g$. Let $U:=\{ X \in \g: |Re(\lambda(X))|<1/2    \}$, for $\lambda(X)$ an eigenvalue of $X$. Let $V$ be another copy of $U$. We have 
$U \to G$ by $X \mapsto \exp(2\pi i X)$ and $V \to G$ by $Y \mapsto 
\begin{pmatrix} -1 & 0 \\
                           0  &  -1
                       \end{pmatrix} 
\exp(2 \pi i Y)$. Let $D=\{ H \in \t: 0< \lambda_1(H) < 1/2\}$, where $\lambda_1(H)$ is the first eigenvalue of $H$. We have $D \x G/T \to U$ by $(H,g) \mapsto gHg^{-1}$ and $D \x G/T \to V$ by 
$(H,g) \mapsto g(H- 
\begin{pmatrix} 1/2 & 0 \\
                           0  &  -1/2
                       \end{pmatrix} ) g^{-1}$. 
Notice that $[H,\begin{pmatrix} 1/2 & 0 \\
                           0  &  -1/2
                       \end{pmatrix}]=0.$
The commutative diagram
\[\xymatrix{  &  D \x G/T \ar[dl]\ar[dr] \ar@{}|{\square}[dd] &\\
U  \ar[dr]  &            &    V \ar[dl]\\
&     G   &
}
\]
 is cartesian. All arrows are open embeddings and $U \coprod V \to G$ is surjective.
The diagram is $G$-equivariant, and passing to the quotient, we have 
\begin{equation}
\label{example}
\xymatrix{  &  D/T \ar[dl]\ar[dr] \ar@{}|{\square}[dd] &\\
U/G  \ar[dr]  &            &    V/G \ar[dl]\\
&     G/G   &
}
\end{equation}
with all actions being adjoint actions. Passing to sheaves, the pullback functors preserve nilpotent singular supports. Hence we have a

$$Sh_\N(G/G) \simeq
\lim{ 
\left(	
\xymatrixcolsep{-0.05in}
\vcenter{\xymatrix{
 &  Sh_{\mathcal{N}}(D/T) &  \\
Sh_\mathcal{N}(U/G) \ar[ru] & & Sh_\mathcal{N}(V/G) \ar[lu]
}}
\right) } $$
The singular support condition allows us to deduce that $Sh_\N(\g/G) \xrightarrow{j^*} Sh_\N(U/G)$ is an equivalence, for $j:U \to \frg$ the open embedding. Similarly we have $Sh_\N(\frt/T) \simeq Sh_\N(D/T)$ and $Sh_\N(\g/G) \simeq Sh_\N(V/G)$. Hence we get

$$Sh_\N(G/G) \simeq
\lim{ 
	\left(	
	\xymatrixcolsep{-0.05in}
	\vcenter{\xymatrix{
			&  Sh_{\mathcal{N}}(\frt/T) &  \\
			Sh_\mathcal{N}(\g/G) \ar[ru] & & Sh_\mathcal{N}(\g/G) \ar[lu]
	}}
	\right) } $$

Moreover the functors on the right hand side can be identified with parabolic restriction (see Section~\ref{propagation_untwisting} for a detailed discussion). Hence this gives a description of the category of character sheaves on a Lie group in terms of categories of character sheaves on Lie algebras. 
It turns out that the charts $U/G,V/G$ and $D/T$ above appear naturally in side the infinite dimensional gauge uniformizations.

\subsection{Acknowledgements} We are indebted to David Ben-Zvi, Roman Bezrukavnikov, Dragos Fratila,  Sam Gunningham, Quoc Ho, Jacob Lurie, Tao Su and Zhiwei Yun for  helpful discussions.  We would like to thank the organizers of the workshop ``Geometric Langlands and derived algebraic geometry" at CIRM Luminy, where a preliminary version of the results was presented. 
We thank an anonymous referee for many helpful comments on this paper.
PL is grateful for the support of the Advanced Grant “Arithmetic and Physics of Higgs moduli spaces” No. 320593 of the European Research Council.
DN is grateful for the support of NSF grant DMS-1502178.

\section{Preliminaries on category theory}

\subsection{The Grothendieck construction and Kan extensions}

References for the Grothendieck construction (also referred to as unstraightening functor) are \cite[Sect 3.2]{Lur}, \cite[I.1.1.4]{GR}. We shall only define it in the context we need.
By a $(2,1)$-category, we mean a strict $2$-category, such that all $2$-morphisms are invertible. And a $2$-groupoid is a strict $2$-category, such that all $1$ and $2$-morphisms are invertible. We can view a $(2,1)$-category as an $\oo$-category and a $2$-groupoid as an $\oo$-groupoid. For any $\oo$-category $\mathscr{C}$, denote by $\mathscr{C}^{\triangleright}$ the $\oo$-category obtained by add one final object $*$ to $\mathscr{C}$. For a functor $F:\mathscr{C}^{\triangleright} \to \mathscr{T}$ between $\oo$-categories, we say $F$ is a colimit diagram if the natural map $\colim_\mathscr{C} F \to  F(*)$ is an isomorphism in $\mathscr{T}$. 

\begin{defn} Let $\mathscr{C}$ be an ordinary category, and $\bF: \mathscr{C} \to \mathscr{S}_{ \leq 2} $ a strict functor to the category $\mathscr{S}_{\leq 2}$ of small $2$-groupoids.
The \textit{Grothendieck construction} $\int^{\mathscr{C}} \bF$ is a $(2,1)$-category with 
\begin{itemize}
	\item objects: $(x,E),$ for $x \in \mathscr{C}$ and $E \in \bF(x)$;
	\item 1-morphisms: $(f,a): (x,E) \to (y,F)$, for $f: x \to y$ a $1$-morphism in $\mathscr{C}$, and $a: \bF(f)(E) \to F$ a morphism in $\bF(y)$;
	\item 2-morphisms: $\eta_\alpha: (f,a) \Rightarrow (f,b)$ for $\alpha: a \Rightarrow b $ a $2$-morphism in $\bF(y)$.
\end{itemize}	
 There is a natural functor $p: \int^{\mathscr{C}} \bF \to \mathscr{C}$, via $(x,E) \mapsto x, (f,a) \mapsto f$, and $\eta_\alpha \mapsto id$.
\end{defn}

\begin{defn}
Given a functor $\rho: \mathscr{A} \to \mathscr{B}$ a functor between $\oo$-categories and an $\oo$-category $\mathscr{T}$, denote $\rho^*:[\mathscr{B},\mathscr{T}] \to [\mathscr{A},\mathscr{T}]$ the induced functor. The \textit{left Kan extension} $\rho_!$ is the left adjoint to $\rho^*$.

\end{defn}

We collect some basic properties of Kan extensions:

\begin{prop} Let $K:\mathscr{A} \to  \mathscr{T}$ be a functor,
\begin{enumerate}
\item Let $\pi: \mathscr{A} \to pt$ (the one point category), 
then $\pi_! \mathscr{A}(pt) \simeq \colim_\mathscr{A} K$,
 provided either side of the equation exist.
\item Let $\rho: \mathscr{A} \to \mathscr{B}, \varphi: \mathscr{B} \to \mathscr{C}$, and assume that $\rho_{!} (K) , \varphi_{!}(\rho_{!}(K))$ exist, then $(\varphi \circ \rho)_{!}(K)$ exist and $(\varphi \circ \rho)_{!}(K)\simeq \varphi_{!}(\rho_{!}(K))$.
\item Let $\rho: \mathscr{A} \to \mathscr{B}$, then $\colim_{\mathscr{A}} K \simeq \colim_{\mathscr{B}} \rho_!(K)$.
\end{enumerate}
\end{prop}
\begin{proof}
(1) follows from the definition of colimit. (2) follows from the fact that $\rho^* \circ \varphi^* \simeq (\varphi \circ \rho)^*$ and that adjoints are canonical. (3) follows from (1) and (2), by taking $\mathscr{C}=pt$ in (2).
\end{proof}

The following statement can be found in \cite[I.1.2.2.4]{GR}.
\begin{prop}
\label{computeKan}
Let $p : \int^\mathscr{C} \bF  \to \mathscr{C}$ the natural map, and $K \in [\int^\mathscr{C} \bF, \mathscr{T}]$.

Then $p_!(K)(x)= \textup{colim}_{\bF(x)} K$, provided that the colimits exist.

\end{prop}

\subsection{The index category $\int^{\Delta^{op}} {S^\bullet_{\Gamma}}$}
\label{descentcategory}

Denote by $\Delta$ the simplex category, it is the category of finite non-empty linearly ordered sets. Denote by $[n]$ the linear ordered set $\{0 \to 1 \to ... \to n \} \in \Delta.$

\begin{defn}
Let $S$ be a set, $\Gamma$ a group acting on $S$. 
\begin{enumerate}
\item $S // \Gamma $ denotes the set of orbits.	
\item  $S / \Gamma$ denotes the quotient groupoid.	
\item The $(2,1)$-category $S_{\Gamma}$ has
\begin{itemize}
	\item object $s$ for every $s \in S$;
	\item morphism $\gamma: s \to t$ for every $\gamma \in \Gamma, s,t \in S$, such that $\gamma(s)=t$;
	\item 2-morphism $\gamma'\gamma^{-1}: \gamma \Rightarrow \gamma'$, for every $\gamma,\gamma': s \to t$. 
\end{itemize}
The identity and composition are given by the obvious ones. We see that $(-)_{\Gamma}$ defines a functor $\Gamma$-$\Set \to \mathscr{S}_{\leq 2}.$ 

\item Let $S^{\bullet}: \Delta^{op,\triangleright} \to  \Gamma$-$\Set$ be the functor $[n] \mapsto S^{n+1}$ with diagonal $\Gamma$-action, where $S^0:=pt$ the one point set. And define $S^{\bullet}_{\Gamma}:=(-)_{\Gamma} \circ S^\bullet:   \Delta^{op,\triangleright} \to    \mathscr{S}_{\leq 2}$.

\end{enumerate}
\end{defn}

\begin{prop}
The natural functor $S_{\Gamma} \to S//\Gamma$ is an equivalence of $(2,1)$-categories, where we view a set as a $(2,1)$-category with trivial $1$ and $2$-morphisms.
\end{prop}
\begin{proof}
	This functor is clearly essentially surjective. Let $s,t \in S$, we need to show the map $\Hom_{S_\Gamma}(s,t) \to \Hom_{S//\Gamma}(s,t)$ is an equivalence of ordinary categories. If there is no  $\gamma \in \Gamma$, such that $\gamma(s)=t$, then both categories are empty. Otherwise, $\Hom_{S//\Gamma}(s,t)$ is the singleton category $\{*\}$ and $\Hom_{S_\Gamma}(s,t)$ consist of object $\gamma \in \Gamma$, with $\gamma(s)=t$, and any two objects $\gamma, \gamma'$ are isomorphic by the arrow $\gamma'\gamma^{-1}$. Hence $\Hom_{S_\Gamma}(s,t)$ is also isomorphic to $\{*\}$.
\end{proof}

\begin{rmk} 
\begin{enumerate}
\item The indexing $(2,1)$-category $S_{\Gamma}$ is natural since the stacks we use are $1$-truncated (e.g. $\g/G, G/G$ and $\Bun_G(C)$), and such stacks form a $(2,1)$-category inside $\Stk$. 
 
\item The category $\int^{\Delta^{op, \triangleright}} S^{\bullet}//\Gamma $ has a final object, denote by $pt$. And $\int^{\Delta^{op, \triangleright}} S^{\bullet}//\Gamma \simeq (\int^{\Delta^{op}} S^{\bullet}//\Gamma)^\triangleright$.
\end{enumerate}

\end{rmk}

\begin{cons} 
\label{egxreg}
Let $X$ be a set with an action of a discrete group $\Gamma$. 
Choose a $\Gamma$-set $S$, and subset $V_s \subset X,$ for each $s \in S$, such that the collection of subsets $\{ V_s: s \in S \}$ are $\Gamma$-invariant, i.e, $\gamma(V_s)=V_{\gamma s}$, for any $\gamma \in \Gamma, s \in S$. For any $\bs=(s_1,s_2,...,s_k) \in S^k$, put $|\bs|:=\{s_1,s_2,...,s_k\} \subset S$, denote by $V_\bs:= \bigcap_{s \in |\bs|} V_s$, $\Gamma_\bs$ the stabilizer of $\Gamma$ at $\bs$ (for the diagonal action).
Then we define a functor 
$$\xymatrix{\bV:\int^{\Delta^{op,\triangleright}} S^{\bullet}_{\Gamma}  \ar[r] & \mathscr{S}} $$

\begin{enumerate}
\item $\bV(\bs):=V_\bs/\Gamma_\bs$, where $\bV(pt):=X/\Gamma$;
\item $\bV(\gamma):=Ac_\gamma: V_\bs/\Gamma_\bs \to V_{\gamma{\bs}}/\Gamma_{\gamma \bs}$ the action by $\gamma$; 
\item for $2$-morphism $\gamma' \gamma^{-1}: \gamma \Rightarrow \gamma': \bs \to \mathbf{t}$. Define $\bV(\gamma' \gamma^{-1}):= \eta_{\gamma'\gamma^{-1}} \circ Ac_\gamma  : Ac_\gamma \Rightarrow Ac_{\gamma'}$, where $\eta_{\gamma' \gamma^{-1}} : Id_{V_\mathbf{t}} \Rightarrow Ac_{\gamma'\gamma^{-1}}: V_\mathbf{t}/\Gamma_\mathbf{t} \to V_\mathbf{t}/\Gamma_\mathbf{t}$  is the canonical trivialization of the action of $\gamma'\gamma^{-1} \in \Gamma_{\mathbf{t}}$ as inner automorphism.
\item for $\delta:\bs \to \bs'$, then $V_\bs \subset V'_{\bs'}$ and $\Gamma_\bs \subset \Gamma_{\bs'}$, this gives $\bU(\delta):=V_\bs/\Gamma_\bs \to V_{\bs'}/\Gamma_{\bs'}$
\end{enumerate}
Let$ \xymatrix{\bV_0:\int^{\Delta^{op,\triangleright}} S^{\bullet}_{\Gamma}  \ar[r] & \mathscr{S}}$ be the functor by same formula above except we replace / by //. Hence $\bV_0$ is isomorphic to the composition $\xymatrix{\int^{\Delta^{op,\triangleright}} S^{\bullet}_{\Gamma} \ar[r]^-{\bV} & \mathscr{S} \ar[r]^-{\pi_0} & \Set \subset \mathscr{S} } $
 \end{cons}
 
 \begin{prop}
 \label{xcolimit}
Assume that $\bigcup_{s \in S} V_s= X$, then the natural morphisms in $\mathscr{S}$ is an equivalence:
$$\xymatrix{\colim_{\int^{\Delta^{op}} S^{\bullet}_{\Gamma} } \bV
 \ar[r]^-{\sim} & \bV(*)= X/\Gamma}.$$
Assume further that for any $x \in V_s$, the stabilizers satisfies $\Gamma_x \subset \Gamma_s$, then the natural morphism is an equivalence:
$$\xymatrix{\colim_{\int^{\Delta^{op}} S^{\bullet}_{\Gamma} } \bV_0
	\ar[r]^-{\sim} & \bV_0(*)= X// \Gamma}.$$

 \end{prop} 
\begin{proof}
Denote $p: \int^{\Delta^{op}} S^\bullet_{\Gamma} \to \Delta^{op} $ the natural map. By Proposition~\ref{computeKan}, we have $p_!(\bV)([n]) = \colim_{S^n_{\Gamma}} V_\bs/\Gamma_\bs \simeq \coprod_{[\bs] \in S^n//\Gamma, \text{ fix } \bs \text{ a lift of } [\bs]} V_\bs/\Gamma_\bs \simeq (\coprod_{\bs \in S^n} V_\bs)/\Gamma$. 

Hence $p_!\bV$ is isomorphic to the functor $[n] \mapsto (\coprod_{s \in S} V_s)^n_X /\Gamma$, where for any map $Y \to X$, denote by $Y^n_X:= Y \times_X  ... \times_X Y$ the $n$-fold fiber product.
Therefore $\colim_{\int^{\Delta^{op}} S^\bullet_{\Gamma} } \bV \simeq  \colim_{\Delta^{op}} p_!\bV \simeq  X/\Gamma$.
For the second statement, note that under the additional assumption, the functor $\bV$ takes $1$-morphisms to fully-faithful morphisms in $\mathscr{S}$, hence the second equivalence follows from the first one.

\end{proof}

\begin{rmk}
 The upshot of this construction is that the charts $V_\bs/\Gamma_\bs \to X/\Gamma$ are usually non-Galois. The category $\int^{\Delta^{op}} S^{\bullet}_{\Gamma}$ gives a way to organizes these non-Galois charts. The main example we have in mind is when $X=\t$ and $\Gamma = W_{\aff}$. 
 
\end{rmk}

\subsection{The $\oo$-category of correspondence}
The reference is \cite[Chapter 7]{GR}. We shall use the $\oo$-category of correspondences to organize the higher morphisms between various restriction functors in our main theorem. Morally speaking, the advance of using correspondences is that the higher coherence follows from cartesian property of certain squares.

Let $\mathscr{C}$ be an $\infty$-category. 
\begin{defn}
	Define $\Corr(\mathscr{C})$ the $\infty$-category via $\Mor_{\Cat_{\oo}}([n], \Corr({\mathscr{C})}):= \textup{Grid}^{\geq \textup{dgnl}}_n(\mathscr{C}), $ where  $\textup{Grid}^{\geq \textup{dgnl}}_n(\mathscr{C}) \subset  \textup{Mor}_{\Cat_{\oo}}(([n] \times [n]^{op})^{\geq \textup{dgnl}},\mathscr{C})$ is the full $\oo$-subgroupoid consist of commutative diagram $\underline{c}$:
	\[\xymatrix{ c_{0,n}  \ar[r] \ar[d] &  c_{0,n-1}  \ar[r] \ar[d]  & ...  \ar[r] \ar[d] &  c_{0,1} \ar[r] \ar[d] &  c_{0,0}  \\
		c_{1,n}   \ar[r]  \ar[d]  & c_{1,n-1}  \ar[r] \ar[d]  & ...  \ar[r] \ar[d] &  c_{1,1}     \\
		... \ar[r]   \ar[d]      &     ... \ar[r] \ar[d]   &  ...   \\
		c_{n-1,n}   \ar[r] \ar[d]  &   c_{n-1,n-1}  \\
		c_{n,n}
	}
	\]
	such that all squares are cartesian.
\end{defn}

\begin{rmk}
	$\Corr(\mathscr{C})$ can be defined as an $(\infty,2)$-category. However, for our purpose, we only viewed $\Corr(\mathscr{C})$ as an $\infty$-category as defined above. 
\end{rmk}

There is an canonical equivalence $can:\textup{Corr}(\mathscr{C})^{op} \simeq \textup{Corr}(\mathscr{C})$, by switching the source and target for a correspondence (i.e flipping along the main diagonal of the above diagram). For any functor $F:\mathscr{I} \to \textup{Corr}(\mathscr{C})$, denote by 
\begin{equation}
\label{conjugatefunctor}
\overline{F}:= can \circ F^{op}: \mathscr{I}^{op} \to \textup{Corr}(\mathscr{C}).
\end{equation}

\subsubsection{Functors into the category of correspondences}

Assume that $\mathscr{C}$ is an $\oo$-category with a final object $*$. Then we can view $\Mor_{\Cat_{\oo}}([n],\Corr(\mathscr{C}))$ as a full subgroupoid of $\Mor_{\Cat_{\oo}}([n] \times [n]^{op},\mathscr{C})$, by sending all entries below the off-diagonal to $*$. For any $\oo$-category $\mathscr{I}$, write $\mathscr{I}= \colim_{[i]/\mathscr{I}} [i]$, then we can view $\Mor_{\Cat_{\oo}}(\mathscr{I},\Corr(\mathscr{C}))$ as a full subgroupoid of $\Mor_{\Cat_{\oo}}(\mathscr{I} \times \mathscr{I}^{op},\mathscr{C})$. For $\mathscr{I}=[1] \times [n]$, we have the following:

\begin{prop}
	\label{naturaltransformation}
	$\textup{Mor}_{\Cat_{\oo}}([1] \times [n],\Corr(\mathscr{C}))  \subset 
	\textup{Mor}_{\Cat_{\oo}}(([1] \times [1]^{op}  \times [n] \times [n]^{op}),\mathscr{C})$ is the full $\oo$-subgroupoid consists of $\underline{c}^{0,0}  \leftarrow \underline{c}^{0,1} \to \underline{c}^{1,1}$, such that $\underline{c}^{l,k} \in \textup{Grid}^{\geq \textup{dgnl}}_n(\mathscr{C})$ and the following labeled squares are cartesian, for all $i,j$:
	\begin{equation}
	\label{twocartesian}
	\xymatrix{  c^{1,1}_{i+1,j+1}   &  c^{1,1}_{i,j+1} \ar[l] \ar[r] \ar@{}[rd]|-{\square} &  c^{1,1}_{i,j} \\
		c^{0,1}_{i+1,j+1} \ar[u] \ar[d] \ar@{}[rd]|-{\square} &    c^{0,1}_{i,j+1} \ar[l] \ar[r] \ar[u] \ar[d]  &  c^{0,1}_{i,j} \ar[u] \ar[d] \\
		c^{0,0}_{i+1,j+1}  &         c^{0,0}_{i,j+1}     \ar[r] \ar[l]      &   c^{0,0}_{i,j}
	} 
	\end{equation}
\end{prop}	

\begin{proof} Let $\textbf{c}=c^{l,k}_{i,j} \in  \textup{Mor}_{\Cat_{\oo}}(([1] \times [1]^{op}  \times [n] \times [n]^{op}),\mathscr{C})$. Then 
	
	$\textbf{c} \in  \textup{Mor}_{\Cat_{\oo}}([1] \times [n],\Corr(\mathscr{C}))$ 
	
	$\Longleftrightarrow$ for any $f: [m] \to [1] \times [n]$, $f^*\textbf{c} \in \Mor_{\Cat_{\oo}}([m],\Corr(\mathscr{C}))$, 
	
	$\Longleftrightarrow$ for $f_i:[n+1] \to [1] \times [n]$, $f_i^*\textbf{c} \in \Mor_{\Cat_{\oo}}([n+1],\Corr({\mathscr{C}}))$, where $f_i$ is the $(n+1)$-simplex $(0,0) \to (0,1) \to ... \to (0,i) \to (1,i) \to ...\to (1,n)$, for $i=0,1,...,n$.
	
	$\Longleftrightarrow$ for $i=0,1,...,n$, the following squares are cartesian: 
	\[\xymatrix{ c_{0,n}^{0,1}  \ar[r] \ar[d] &  c_{0,n-1}^{0,1}  \ar[r] \ar[d]  & ... \ar[d] \ar[r] & c^{0,1}_{0,i} \ar[r] \ar[d] &  c_{0,i}^{0,0} \ar[r] \ar[d]  &  ... \ar[d] \ar[r] & c^{0,0}_{0,1} \ar[d] \ar[r]  & c_{0,0}^{0,0}   \\
		c_{1,n}^{0,1}   \ar[r]  \ar[d]  & c_{1,n-1}^{0,1}  \ar[r] \ar[d]  & ... \ar[d] \ar[r]  &  c^{0,1}_{1,i} \ar[r] \ar[d] & c^{0,0}_{1,i}  \ar[d] \ar[r] & ... \ar[d] \ar[r] & c^{0,0}_{1,1}     \\
		... \ar[r]   \ar[d]      &     ... \ar[r] \ar[d]   &  ... \ar[d] \ar[r]  &  ... \ar[d] \ar[r] & ... \ar[d] \ar[r] &  ...  \\
		c_{i,n}^{0,1}   \ar[r] \ar[d]  & c^{0,1}_{i,n-1}  \ar[d] \ar[r] & ... \ar[d] \ar[r]  & c^{0,1}_{i,i} \ar[d] \ar[r]  & c^{0,0}_{i,i}  \\ 
		c_{i,n}^{1,1}   \ar[d] \ar[r] & c_{i,n-1}^{1,1}  \ar[d] \ar[r] &  ... \ar[d] \ar[r] & c^{1,1}_{i,i} \\ 
		...   \ar[d] \ar[r]  & ... \ar[d] \ar[r]   &  ...   & 	\\
		c_{n-1,n}^{1,1}    \ar[d] \ar[r]  & c_{n-1,n-1}^{1,1}    \\
		c_{n,n}^{1,1}
	}
	\]
	The squares can be divided into five groups: (1) two rows consisting of $c^{0,1}_{i,*}$ and $c^{1,1}_{i,*}$; (2) two columns consisting of $c^{0,1}_{*,i}$ and $c^{0,0}_{*,i}$; (3) the left upper corner consisting of $c^{0,1}_{*,*}$; (4) triangle on the right consisting of $c^{0,0}_{*,*}$; (5) triangle on the bottom consisting of $c^{1,1}_{*,*}.$ Then $(1),(2)$ corresponds to the two cartesian squares in (\ref{twocartesian}), and (3),(4),(5) corresponds to the condition that $\underline{c}^{l,k} \in \textup{Grid}^{\geq \textup{dgnl}}_n(\mathscr{C})$. 
\end{proof}

\begin{cor}
	\label{natural trans in corr}
	Let $\underline{c} \to \underline{d} \in \textup{Mor}_{\Cat_{\oo}}([1] \times [n] \times [n]^{op},\mathscr{C})$, such that $\underline{c} , \underline{d} \in \textup{Grid}_n^{\geq \textup{dgnl}}(\mathscr{C})$. 
	\begin{enumerate}
		\item Assume that for any $i,j$ the square 
		$$\xymatrix{ c_{i,j+1} \ar[r] \ar[d] &   c_{i,j} \ar[d]  \\
			d_{i,j+1}   \ar[r]        &    d_{i,j}
		}$$
		is cartesian. Then $\underline{c} \leftarrow \underline{c} \to \underline{d} \in \textup{Mor}_{\Cat_{\oo}}([1] \times [n],\Corr(\mathscr{C})). $ 
		\item Similarly, assume that for any $i,j$ the square 
		$$\xymatrix{ c_{i,j} \ar[r] \ar[d] &   c_{i+1,j} \ar[d]  \\
			d_{i,j}   \ar[r]        &    d_{i+1,j}
		}$$
		is cartesian. Then $\underline{c} \leftarrow \underline{c} \to \underline{d} \in \textup{Mor}_{\Cat_{\oo}}([1] \times [n],\Corr(\mathscr{C})^{op}). $ 
	\end{enumerate}	
\end{cor}	

\subsubsection{Correspondences and sheaves}
See Appendix~\ref{analytic stacks} for our convention on analytic stacks. Let $\Stk^\circ \subset \Stk$ be the subcategory consist of analytic stacks and morphisms which are representable up to unipotent gerbes.

For $f:X\to Y$ in $\Stk^\circ$, the functors $f_!,f_*,f^!,f^*$ are defined. For $F: X \to Y$ a morphism in $\Corr(\Stk^\circ)$ given by $ X \xleftarrow{f} Z \xrightarrow{g} Y$, we define $Sh(F):=g_*f^!: Sh(X) \to Sh(Y)$ a morphism in $\Cat_\infty$. Following the method in \cite[Chapter 7, Theorem 3.2.2, Theorem 5.2.4]{GR}, we extend $Sh(F)$ to a functor 
$$ Sh: \Corr(\Stk^\circ) \to \Cat_{\infty}.$$


\begin{eg}
	\label{1simplex}
	Let $X_i \leftarrow Y_i \to Z_i$, $i=1,2$ be two $1$-simplices in $\Corr({\Stk^\circ})$. Then by Proposition~\ref{naturaltransformation}, the following commutative diagram 
	$$ \xymatrix{  X_1   &  Y_1 \ar[l] \ar[r] \ar@{}[rd]|-{\square} &  Z_1 \\
		X \ar[u] \ar[d] \ar@{}[rd]|-{\square} &    Y \ar[l] \ar[r] \ar[u] \ar[d]  &  Z \ar[u] \ar[d] \\
		X_2  &        Y_2     \ar[r] \ar[l]      &   Z_2
	} $$
	gives a map between these two $1$-simplices (i.e a functor from $\Delta^1 \times \Delta^1 \to \Corr(\Stk^\circ)$). Composing with $Sh$ we get the following morphism between 1-simplices in $\Cat_\oo$:
	$$\xymatrix{ Sh(X_1)  \ar[r] \ar[d]  &  Sh(Z_1) \ar[d]  \\
		Sh(X_2)	 \ar[r]	 & 	Sh(Z_2)		}$$
	The commutativity is induced by base change isomorphisms of the two cartesian squares.

\end{eg}

\section{Preliminaries on Lie theory}

\subsection{Groups generated by reflections}
A reference for this section is \cite[V]{Bour}.

We will denote by $A$  a real affine space of finite dimension, and by $L$ the vector space of translations of $A$. Assume that $L$ is provided with an inner product. Let $\mathfrak{H}$ be a set of hyperplanes of $A$, and $W=W_\frH$ be the subgroup of automorphism of $A$ generated by orthogonal reflections $r_H$ with respect to the hyperplanes $H \in \mathfrak{H}$. We assume the following conditions are satisfied:

\begin{enumerate}
\item For any $w \in W$ and $H \in \mathfrak{H}$, the hyperplane $w(H)$ belongs to $\mathfrak{H};$
\item The group $W$, provided with the discrete topology, acts properly on $A$.
\end{enumerate}

Given two points $x$ and $y$ of $E$, denote by $R\{x,y\}$ the equivalence relation: \\
for any hyperplane $H \in \mathfrak{H}$, either $x \in H$ and $y \in H$ or $x$ and $y$ are strictly on the same side of $H$.

\begin{defn}  \hfill
\label{facet}
\begin{enumerate} 
\item A \textit{facet} of $A$ is an equivalence class of the equivalence relation defined above.
\item A \textit{chamber} of $A$ is a facet that is not contained in any hyperplane $H \in \mathfrak{H}$.
\item A \textit{vertex} of $A$ is a facet that consists of a single point.
\item For $S \subset A$ subset, the \textit{star} of $S$ is $St_S:=\bigcup_{J \text{ facet}, S \cap \overline{J} \neq \emptyset} J$; and  $W_S:=\{w \in W: w|_S =id\}$ denotes the group of elements fixing $S$. Note that for $S,J$ facets, then $S \cap \overline{J} \neq \emptyset$ if and only if $S \subset \overline{J}$.
\item Let $J$ be a facet, A \textit{face} of $J$ is a facet $I$, such that $I \subset \overline{J}$.
\item For a facet $J$, denote by $\mathscr{F}_J$ the poset of all faces of $J$, and for  $I,I'\in \mathscr{F}_J$, we say $I \leq I'$ if $ I \subset \overline{I}'$. We also view $\mathscr{F}_J$ as a category with morphism given by the partial order.
\end{enumerate}
\end{defn}

We collect some facts:

\begin{thm} \hfill
\label{reflectiongroup}
\begin{enumerate}
\item For $J \subset A$ a facet, the group $W_J$ is generated by $\{r_H : J \subset H\}$.
\item For any chamber $C$, the closure $\overline{C}$ of $C$ is a fundamental domain for the action of $W$ on $A$, i.e., every orbit of $W$ in $A$ meets $\overline{C}$ in exactly one point. 
\end{enumerate}
\end{thm}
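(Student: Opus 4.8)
This is a classical result — it is the core of the structure theory of the pair $(W,C)$ developed in \cite[Chapter~V]{Bour} — so the plan is to recall that argument, organized around the fixed chamber $C$. For the existence half of (2) I would fix a point $a\in C$ and use the properness of the $W$-action: every orbit $Wx$ is closed and discrete, so $y\mapsto d(y,a)$ attains a minimum on it, say at $wx$. I claim $wx\in\overline C$. Since $C$ is a chamber it lies, for each $H\in\mathfrak H$, strictly inside one of the two open half-spaces determined by $H$; writing $\overline{D_H}$ for the corresponding closed half-space, properness makes $\mathfrak H$ locally finite and hence $\overline C=\bigcap_{H}\overline{D_H}$. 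If $wx\notin\overline C$, pick $H$ with $wx\notin\overline{D_H}$, so $a$ and $wx$ lie strictly on opposite sides of $H$; writing $a=a_0+s\nu$ and $wx=b_0+t\nu$ with $a_0,b_0\in H$, $\nu$ a unit normal and $s>0>t$, one gets $d(r_Hwx,a)^2-d(wx,a)^2=(s+t)^2-(s-t)^2=-4st<0$, contradicting minimality because $r_Hw\in W$. Hence every orbit meets $\overline C$.

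Next I would reduce (1) to a statement about point stabilizers. The containment $\langle r_H: J\subseteq H\rangle\subseteq W_J$ is clear, since each such $r_H$ fixes $H\supseteq J$ pointwise, and $W_J\subseteq\Stab_W(x)$ is trivial for any $x\in J$. Moreover, for $x\in J$ one has $\{H\in\mathfrak H: x\in H\}=\{H\in\mathfrak H: J\subseteq H\}$: if $x\in H$ but $J\not\subseteq H$ then, $J$ being an equivalence class of the relation $R\{x,y\}$, all of $J$ would lie strictly on one side of $H$, against $x\in J\cap H$. So it suffices to prove $\Stab_W(x)=\langle r_H: x\in H\rangle$, which then squeezes $W_J$ between $\Stab_W(x)$ and $\langle r_H: J\subseteq H\rangle$ and forces equality throughout; in the same language, uniqueness in (2) reads: if $x,x'\in\overline C$ and $x'\in Wx$, then $x=x'$.

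The two remaining assertions are the genuine content, and in the Bourbaki treatment they come together with two companion facts: the reflections $\mathfrak S$ in the walls of $C$ (those $H$ with $\dim(H\cap\overline C)=\dim A-1$) generate $W$, and the only element of $W$ stabilizing a chamber setwise is the identity. The engine is a gallery argument — measure the distance between two chambers by the number of hyperplanes of $\mathfrak H$ separating them, and induct on it, using that crossing a single wall of a chamber changes the distance to $C$ by exactly $\pm1$ so that one can always reflect ``toward $C$'', together with an exchange property of reduced words in $\mathfrak S$. Granting $W=\langle\mathfrak S\rangle$ and triviality of chamber stabilizers, uniqueness in (2) follows by induction on word length in $\mathfrak S$; and $\Stab_W(x)$ is handled by restricting the whole discussion to the finitely many chambers whose closure contains $x$, on which the reflections $r_H$ with $x\in H$ act transitively (a gallery between two such chambers can be chosen to cross only hyperplanes through $x$), so that triviality of chamber stabilizers yields $\Stab_W(x)=\langle r_H: x\in H\rangle$. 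I expect the inductive gallery analysis — in particular establishing triviality of chamber stabilizers and the ``reflect toward $C$'' step — to be the main obstacle; by comparison, the metric argument for existence in (2) and the facet-to-point reduction for (1) are routine.
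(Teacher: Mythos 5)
The paper gives no proof of this theorem: it is stated in the subsection ``Groups generated by reflections'' whose only reference is \cite[V]{Bour}, i.e.\ these are exactly the facts the paper is importing from Bourbaki. Your sketch is a correct recollection of that Bourbaki argument --- metric minimization for existence in (2), reduction of (1) to the claim $\Stab_W(x)=\langle r_H: x\in H\rangle$ via the identification $\{H:x\in H\}=\{H:J\subseteq H\}$ for $x\in J$, and gallery/word-length induction for the rest --- so it matches the source the paper cites; the only blemish is the algebra slip $(s+t)^2-(s-t)^2=4st$, not $-4st$ (the conclusion $<0$ is still correct since $s>0>t$).
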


Fix a chamber $C$, for faces $J,J'$ of $C$ (which are automatically facets of $E$), such that $J \subset \overline{J'}$, we have $St_{J'} \subset St_J$ and $W_{J'} \subset W_J$. The maps $St_{J'}/W_{J'} \rightarrow St_J/W_J$ and $St_{J'}//W_{J'} \rightarrow St_J//W_J$ give two functors $\mathscr F_C \rightarrow \mathscr{S}$.

\begin{prop}
\label{opencolimit}
 The morphism in $\mathscr{S}$:
  $$ \xymatrix{ 
 	\colim_{\mathscr F^{op}_C} St_J/W_J \ar[r]^-{\sim} & A/W 
 }$$ 
 $$ \xymatrix{ 
\colim_{\mathscr F^{op}_C} St_J//W_J \ar[r]^-{\sim} & A//W 
  }$$ 
are equivalences, where $St_J$ and $A$ are equipped with discrete topology.
\end{prop}
\begin{proof} The two statement are equivalent since all arrows (including the augmentations) in the diagram are fully-faithful maps.
For the second statement, we have $St_J//W_J = St_J \cap \overline{C}$, and $A//W=\overline{C}$ by last Theorem. Hence $\colim_{\mathscr{F}^{op}_C} St_J//W_J \simeq \coprod_{x \in \overline{C}} |\textup{N}(x/\mathscr{F}^{op}_C)| \times \{x\} \simeq \coprod_{x \in \overline{C}} \{x\}  \simeq \overline{C}$. Where $x/\mathscr{F}^{op}_C \subset \mathscr{F}^{op}_C$ is the full subcategory consists of faces $I$, such that $x \in St_I$, and $|\textup{N}(-)|$ denotes the geometric realization of the nerve of $-$. We see that $x/\mathscr{F}^{op}_C = \mathscr{F}^{op}_{I}$, for $I$ the face containing $x$, hence $|\textup{N}(x/\mathscr{F}^{op}_C)| \simeq \overline{I}$ is contractible.
\end{proof}

\subsection{Lie theoretic reminder}
\label{Lietheoretic}

\begin{no}
\label{lienotation}
Let $G$ be a reductive algebraic group, $T \subset G$ a maximal torus. Denote by $\Phi=\Phi(G,T)$ the set of roots, by $X_*(T):=\Hom(\C^*,T)$ the coweight lattice, and by $\t_\R:=X_*(T) \otimes \R$. The Weyl group $W:=N_G(T)/T$ acts naturally on $T, X_*(T)$ and $\t_\R$. Let $W_\aff:= W \ltimes X_*(T)$ be the affine Weyl group and $ \Phi_{\aff}:=\{ \alpha_0 -n : \alpha_0 \in \Phi, n \in \Z \} \subset Map(\t_\R,\R)$ be the set of affine roots.  
Denote by $\g$ the Lie algebra of $G$, by $L\g$ and $LG$ the polynomial loop algebra and loop group. For any $\alpha_0 \in \Phi$, denote by $\g_{\alpha_0} \subset \g$ the root space of $\alpha_0$, and for $\alpha=\alpha_0 - n \in \Phi_\aff$, denote by $\g_\alpha:=\g_{\alpha_0} z^n \subset L\g$ the root space of $\alpha$. Fix a lift of set $W \to N_G(T) \subset G$. It gives a lift $W_\aff \to LG$. For $w \in W_\aff$, denote its lift by $\dot{w}$.
\end{no}
Assume further that $G$ is semisimple and simply-connected.  Then $\t_\R$ carries an inner product induced by the Killing form. Denote by $\frH:=\{ \{\alpha(x)=0: x \in \t_\R\}_{\alpha \in \Phi} \}$ and $\frH_{\aff}:=\{ \{\alpha(x)=0: x \in \t_\R\}_{\alpha \in \Phi_\aff}\} $ two collections of hyperplanes in $\t_\R$,  let $W_\frH,W_{\frH_\aff}$ be the corresponding groups generated by reflections. 
The inclusion $\Z \subset \R$ induces $ X_*(T) \subset \t_\R$.

\begin{thm} Viewing $X_*(T)$ as translations of $\t_\R$,  we have the following equality as subgroup of affine linear transformation of $\t_\R$:
\begin{enumerate}

\item $W_\frH=W$
\item  $W_{\frH_\aff} = W_\aff $.
\end{enumerate}
\end{thm}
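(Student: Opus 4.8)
The plan is to identify each generating orthogonal reflection of $W_\frH$ (resp.\ $W_{\frH_\aff}$) with an explicit element of $W$ (resp.\ $W_\aff$) via the standard coroot formula for a reflection, and then to check that the two groups have the same generators. The only input beyond bookkeeping will be that, under the Killing form, a root hyperplane reflection is given by the coroot, together with the lattice identity $X_*(T)=Q^\vee$ coming from simple-connectivity.

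For (1), fix $\alpha\in\Phi$ with associated reflection $s_\alpha\in W$, acting on $\t_\R=X_*(T)\otimes\R$ by $s_\alpha(x)=x-\alpha(x)\alpha^\vee$, where $\alpha^\vee\in X_*(T)$ is the coroot. I would first observe that $s_\alpha$ is a nontrivial involution (since $\alpha(\alpha^\vee)=2$), that it preserves the Killing-form inner product on $\t_\R$ (as $W$ does), and that it fixes $H_\alpha=\{x:\alpha(x)=0\}$ pointwise; an isometric involution fixing a hyperplane pointwise is necessarily the orthogonal reflection in that hyperplane, so $r_{H_\alpha}=s_\alpha$. Letting $\alpha$ run over $\Phi$, the $r_{H_\alpha}$ then generate exactly $\langle s_\alpha:\alpha\in\Phi\rangle=W$, which acts faithfully on $\t_\R$ because $G$ is semisimple; this gives $W_\frH=W$.

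For (2), the same computation with a choice of base point shows that for an affine root $\alpha=\alpha_0-n$ the orthogonal reflection in $H_{\alpha_0,n}=\{x:\alpha_0(x)=n\}$ is $x\mapsto s_{\alpha_0}(x)+n\alpha_0^\vee$, i.e.\ $t_{n\alpha_0^\vee}\circ s_{\alpha_0}$, where $t_v$ denotes translation by $v$. Taking $n=0$ recovers all of $W$; composing $r_{H_{\alpha_0,n}}$ with $r_{H_{\alpha_0,0}}=s_{\alpha_0}$ produces the translation $t_{n\alpha_0^\vee}$, so $W_{\frH_\aff}$ contains translation by the coroot lattice $Q^\vee=\Z\langle\alpha_0^\vee\rangle_{\alpha_0\in\Phi}$; and since every generator $t_{n\alpha_0^\vee}s_{\alpha_0}$ lies in $W\ltimes Q^\vee$, this yields $W_{\frH_\aff}=W\ltimes Q^\vee$. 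It then remains to match this with $W_\aff=W\ltimes X_*(T)$, which is exactly where the simply-connected hypothesis enters: for $G$ semisimple and simply-connected the coweight lattice equals the coroot lattice, $X_*(T)=Q^\vee$, whence $W_{\frH_\aff}=W_\aff$.

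I expect the only genuinely nonformal point to be the identification, in (1), of the inner-product orthogonal reflection in $H_\alpha$ with the coroot formula $x\mapsto x-\alpha(x)\alpha^\vee$: this reduces to recalling that under the Killing form the coroot $\alpha^\vee$ corresponds to $2\alpha/(\alpha,\alpha)$, so that the orthogonal reflection $x\mapsto x-2\tfrac{(x,v_\alpha)}{(v_\alpha,v_\alpha)}v_\alpha$ (with $v_\alpha$ the vector dual to $\alpha$) is precisely the asserted map. Everything else is bookkeeping with semidirect products and faithfulness of the relevant linear actions; as a byproduct one also sees that conditions (1)--(2) of the reflection-group framework hold for $\frH$ and $\frH_\aff$, since $W$ is finite and $W_\aff$ acts properly on $\t_\R$.
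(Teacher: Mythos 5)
The paper states this theorem without proof, as a classical fact from the theory of reflection groups (the reference to Bourbaki ch.\ V in \S 3.1 is the implicit source), so there is no in-paper argument to compare against. Your proof is correct and is the standard one: you identify the orthogonal reflection $r_{H_\alpha}$ with the Weyl reflection $s_\alpha(x)=x-\alpha(x)\alpha^\vee$ via the uniqueness of a nontrivial isometric involution fixing a hyperplane pointwise, observe that composing parallel reflections $r_{H_{\alpha_0,n}}\circ r_{H_{\alpha_0,0}}$ produces the coroot translation $t_{n\alpha_0^\vee}$, and invoke $X_*(T)=Q^\vee$ for semisimple simply-connected $G$ to match $W\ltimes Q^\vee$ with $W_\aff=W\ltimes X_*(T)$; the remarks on faithfulness of the $W$-action on $\t_\R$ and on the Bourbaki axioms (1)--(2) holding are appropriate bookkeeping.
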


\subsubsection{Levi and Parabolic subgroups associated to facet geometry} 
\begin{defn}
	\label{parabolicfacet}
Let $J$ be a facet of $\t_\R$ equipped with $\frH_\aff$. 
\begin{enumerate}
\item $\Phi_J:=\{ \alpha \in  \Phi_\aff: \alpha(J)=0   \}$.

\item Denote by $\g_J \subset L\g$  the Lie-subalgebra:
$$ \g_J:= \t \oplus \bigoplus_{\alpha \in \Phi_J } \g_\alpha, $$
where $\g_\alpha \subset L\g $ is the root space of $\alpha$. Denote by $G_J$ the corresponding subgroup of $LG$.

\item Let $J,J'$ two facet with $J \subset \overline{J'}$, denote by $\frp^{J}_{J'} \subset \g_J$ the subalgebra:
$$\frp^{J}_{J'}:= \t \oplus \bigoplus_{\alpha \in \Phi_J, \alpha(J')>0} \g_\alpha.$$ 
Denote by $P^J_{J'}$ the corresponding subgroup of $G_J$.

\item For $a \in \t_\R$, put $G_a = G_J$ for $J$ the affine facet $J$ containing $a$.
\end{enumerate}
\end{defn}

Theorem~\ref{reflectiongroup} (1) implies:
\begin{prop}
\label{connected}
$W_J \subset G_J.$ 
\end{prop}

\subsubsection{Transitivity of parabolic subgroups}
\label{transpara}
Let $R  \subset G$ be a parabolic subgroup with Levi $K$, and $Q \subset K$ be a parabolic subgroup with Levi $L$. Denote by $P:= Q \circ R:= Q \times_K R.$ We have the following diagram:
\[
\xymatrixcolsep{0.4in}
\xymatrix{   & &  P \ar[dr]^{\tilde{p}_1} \ar[ld]_{\tilq_2} & & \\
	& Q  \ar[dr]^{p_1} \ar[dl]_{q_1} \ar@{}[rr]|{\square}&  &   R \ar[dr]^{p_2} \ar[dl]_{q_2} \\
	L  & &  K  &  & G
}  \]

Denote by $\frl,\frp,\frq,\frk,\frr,\frg$ the corresponding Lie algebras.

\begin{prop}
\label{transad}
 There are commutative diagrams of stacks with the middle squares being cartesian, where all actions are adjoint actions:
\[
\xymatrixcolsep{0.4in}
\xymatrix{   & &  \frp/P \ar[dr]^{\tilde{p}_1} \ar[ld]_{\tilq_2} & & \\
& \frq/Q  \ar[dr]^{p_1} \ar[dl]_{q_1} \ar@{}[rr]|{\square}&  &   \frr/R \ar[dr]^{p_2} \ar[dl]_{q_2} \\
\frl/L  & &  \frk/K  &  & \frg/G
}  \]
\[
\xymatrixcolsep{0.4in}
\xymatrix{   & &  P/P \ar[dr]^{\tilde{p}_1} \ar[ld]_{\tilq_2} & & \\
& Q/Q  \ar[dr]^{p_1} \ar[dl]_{q_1} \ar@{}[rr]|{\square}&  &   R/R \ar[dr]^{p_2} \ar[dl]_{q_2} \\
L/L  & &  K/K  &  & G/G
}  \]

\end{prop}

\begin{proof}
We have
\[
\xymatrixcolsep{0.4in}
\xymatrix{   & &  BP \ar[dr]^{\tilde{p}_1} \ar[ld]_{\tilq_2} & & \\
	& BQ  \ar[dr]^{p_1} \ar[dl]_{q_1} \ar@{}[rr]|{\square}&  &   BR \ar[dr]^{p_2} \ar[dl]_{q_2} \\
	BL  & &  BK  &  & BG
}  \]
	
Therefore, for any stack $X$, we have 
\[
\xymatrixcolsep{0in}
\xymatrix{   & & Map(X, BP) \ar[dr]^{\tilde{p}_1} \ar[ld]_{\tilq_2} & & \\
& Map(X,BQ)  \ar[dr]^{p_1} \ar[dl]_{q_1} \ar@{}[rr]|{\square}&  &   Map(X,BR) \ar[dr]^{p_2} \ar[dl]_{q_2} \\
Map(X,BL)  & & Map(X,BK)  &  &  Map(X,BG)
}  \]
Then we obtain the first diagram by taking $X=B\widehat{\GG}_a$, and second diagram by taking $X=S^1$. 
\end{proof}

The following proposition is easy to check:
\begin{prop} Recall $\g_J,\frp^{J}_{J'}$ as in Definition~\ref{parabolicfacet}, we have:
\begin{enumerate}
\item $\g_{J'} \subset \frp^J_{J'} \subset \g_J$, and $\frp^J_{J'}$ is a parabolic subalgebra of $\g_J$ with Levi factor $\g_{J'}$.
\item $\frp^{J'}_{J''} \circ \frp^J_{J'} = \frp^J_{J''}$.
\end{enumerate}
\end{prop}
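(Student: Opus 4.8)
The plan is to reduce both assertions to elementary bookkeeping with affine roots, via the translation of faces of $\t_\R$ into (real) cocharacters. For (1), first note that an affine root $\alpha$ vanishing on $J'$ vanishes on its affine span, hence on $J\subseteq\overline{J'}$; thus $\Phi_{J'}\subseteq\Phi_J$, $\g_{J'}\subseteq\g_J$ and $G_{J'}\subseteq G_J$, the latter two connected reductive by Proposition~\ref{connected}. Now fix $x_0\in J$ and $x_1\in J'$ inside $\t_\R=X_*(T)\otimes\R$ and set $\mu:=x_1-x_0$. For $\alpha=\alpha_0-n\in\Phi_J$ we have $n=\alpha_0(x_0)$, so $\alpha(x_1)=\langle\alpha_0,\mu\rangle$, and since $\alpha$ has a constant sign on the face $J'$ this computes the sign of $\alpha$ on $J'$. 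As $\alpha\mapsto\alpha_0$ is injective on $\Phi_J$ (two affine roots with the same linear part differ by a nonzero constant, which cannot vanish on $J$), we may regard $\Phi_J$ as the root system of the reductive group $G_J$ relative to $T$, and $\mu$ as a real cocharacter of $T$; by construction $\frp^J_{J'}$ coincides with the $\mu$-nonnegative subalgebra $\g_J^{\mu\ge 0}$ (that is, $\t$ together with the root spaces on which $\mu$ pairs $\ge 0$), and in particular it contains $\g_{J'}=\g_J^{\mu=0}$, giving the inclusions $\g_{J'}\subseteq\frp^J_{J'}\subseteq\g_J$. Approximating $\mu$ by an integral cocharacter lying in the same face of the $\Phi_J$-chamber arrangement (which changes none of these subspaces), the standard structure theory of parabolic subgroups of $G_J$ then gives the rest of (1): $\frp^J_{J'}$ is a parabolic subalgebra, integrating to a parabolic subgroup of $G_J$, with Levi $\g_{J'}$ ($G_{J'}$) and nilradical $\frn^J_{J'}:=\bigoplus_{\alpha\in\Phi_J,\ \alpha(J')>0}\g_\alpha$.

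For (2) we have $J\subseteq\overline{J'}\subseteq\overline{J''}$, so (1) applied to the pair $(J',J'')$ produces the parabolic $\frp^{J'}_{J''}=\g_{J''}\oplus\frn^{J'}_{J''}$ of $\g_{J'}$. Unwinding the definition of the composition $\circ$ from Proposition~\ref{trans} (adjoin to the smaller parabolic the nilradical of the larger one), the asserted identity $\frp^{J'}_{J''}\circ\frp^J_{J'}=\frp^J_{J''}$ becomes, inside $\g_J$,
\[
\frn^{J}_{J'}\oplus\bigl(\g_{J''}\oplus\frn^{J'}_{J''}\bigr)=\g_{J''}\oplus\frn^{J}_{J''}.
\]
Since $\Phi_{J''}\subseteq\Phi_{J'}\subseteq\Phi_J$, the three summands on the left are spanned by root spaces indexed by the pairwise disjoint sets $\{\alpha\in\Phi_J:\alpha(J')>0\}$, $\Phi_{J''}$, $\{\alpha\in\Phi_{J'}:\alpha(J'')>0\}$; cancelling the common $\g_{J''}$, the identity is equivalent to the combinatorial equality
\[
\{\alpha\in\Phi_J:\alpha(J'')>0\}=\{\alpha\in\Phi_J:\alpha(J')>0\}\ \sqcup\ \{\alpha\in\Phi_{J'}:\alpha(J'')>0\},
\]
whose right side is disjoint ($\alpha(J')>0$ in the first piece, $\alpha(J')=0$ in the second).

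This equality rests on a single convexity fact: if an affine root $\alpha$ takes a strictly positive (resp. strictly negative) value at some point of $\overline{J''}$, then $\alpha(J'')>0$ (resp. $\alpha(J'')<0$) — because the face $J''$ lies in exactly one of $\{\alpha>0\}$, $\{\alpha=0\}$, $\{\alpha<0\}$, it cannot lie in $\{\alpha=0\}$ (else $\alpha\equiv0$ on $\overline{J''}$), and it cannot lie in the opposite open half-space (else $\overline{J''}$ would sit in the corresponding closed half-space). Granting this: for $\alpha\in\Phi_J$ with $\alpha(J'')>0$, $\alpha$ is not negative at any point of $\overline{J''}\supseteq J'$, so $\alpha(J')\ge 0$, whence $\alpha(J')>0$ (first piece) or $\alpha\in\Phi_{J'}$ (second piece, using $\alpha(J'')>0$); conversely an $\alpha\in\Phi_J$ with $\alpha(J')>0$ is positive at a point of $\overline{J''}$, hence $\alpha(J'')>0$, and an $\alpha\in\Phi_{J'}$ with $\alpha(J'')>0$ already lies in $\Phi_J$. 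I expect the only genuine work to be this convexity/monotonicity argument together with the clerical matter of tracking which reductive group each parabolic lives in (the composition $\circ$ being defined relative to a fixed chain of Levi subgroups); everything else is the root-datum dictionary of Section~\ref{Lietheoretic} and Proposition~\ref{connected}.
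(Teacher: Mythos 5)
Your proof is correct, and since the paper dismisses this proposition as ``easy to check from root datum'' without giving any argument, there is no in-text proof to compare against; the cocharacter reduction for (1) and the constant-sign-on-facets convexity argument for (2) are precisely the intended root-datum bookkeeping, and you carry them out correctly. Two notational points are worth making explicit rather than tacit: you (rightly) read the defining condition $\alpha(J')>0$ for $\frp^J_{J'}$ as $\alpha(J')\geq 0$, since otherwise $\g_{J'}\cap\frp^J_{J'}=\t$ and the inclusion $\g_{J'}\subset\frp^J_{J'}$ in (1) fails; and you (rightly) unwind $\frp^{J'}_{J''}\circ\frp^J_{J'}$ as ``adjoin to the parabolic $\frp^{J'}_{J''}$ of the smaller Levi the nilradical of $\frp^J_{J'}$,'' which is the operation the identity requires, even though the ordering convention in Section~\ref{transpara} sets $R\circ P=\langle P,U_R\rangle$ with $R$ the parabolic of the \emph{larger} group, so matching that convention literally would read the formula as $\frp^J_{J'}\circ\frp^{J'}_{J''}$. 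These are apparent typographical slips in the paper rather than gaps in your argument, but a referee-proof write-up should state the corrected reading before proving it.
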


\section{Preliminaries on singular support} A reference for this section is \cite{KS}. In the following sections, we assume for simplicity that the coefficient ring $k$ has characteristic $0$, see Remark~\ref{char0} about how to drop this constrain. Actually many of the statements hold for more general (possibly non-stable) coefficients. 

Let $X$ be a differentiable manifold, $F \in Sh(X)$ a sheaf on $X$. Then the \textit{singular support} of $F$ is the subset of $T^*X$ defined by the following equivalent conditions:
\begin{enumerate}
	\item  $(x,\xi) \notin SS(F) $ .
	\item  For any differentiable function $f$ on $X$, with $f(x)=0$ and $df_x=\xi,$ the map $ \colim_{U \ni x} F(U)  \xrightarrow{\sim} \colim_{U \ni x} F(U \cap \{f<0\})$ is an isomorphism.
\end{enumerate}
For $X$ be a smooth analytic stack, $SS(F) \subset  T^*X$ is defined via descent. The singular support relates the sheaf theory on $X$ to the symplectic geometry of $T^*X$:
\begin{thm}
	\begin{enumerate}
		\item  $SS(F)$ is a closed conical coisotropic substack of $T^*X$.
		\item  $SS(F)$ is a Lagrangian substack if and only if $F$ is weakly constructible, i.e, there is a stratification $\{S_i\}_{i \in I}$ of $X$, such that $F|_{S_i}$ is a locally constant sheaf for all $i \in I$
	\end{enumerate}
	
\end{thm}	

\cite{La,Gi3} showed that the global nilpotent cone in $T^*\Bun_G$ is a Lagrangian, hence all sheaves we considered are actually weakly-constructible.  
For  $\Lambda \subset T^*X$, put $Sh_{\Lambda}(X):= \{ F \in Sh(X): SS(F) \subset \Lambda \}$, $D^{b}_{\Lambda}(X):=\{ F \in Sh_{\Lambda}(X): F \text{ is constructible }
\}$ the $\oo$-category of constructible sheaves, and $\Perv_{\Lambda}(X):=\{F \in D^{b}_{\Lambda}(X): F \text{ is perverse}\}$ the category of perverse sheaves.
Our results in the remaining of the paper will be stated for $Sh_\Lambda(X)$, but they also apply to $D^{b}_{\Lambda}(X)$ and $\Perv_{\Lambda}(X)$.

Let $f: Y \to X$ be a map between smooth analytic stacks. For any $\Lambda \subset T^*X$, denote $f^*\Lambda:=\;^t f'(f_\pi^{-1}(\Lambda))$ and $f_*\Lambda:= f_\pi(\;^t f'{}^{-1}(\Lambda))$:
$$\xymatrix{ T^*Y  & Y \times_X T^*X \ar[r]^-{ ^t f'}  \ar[l]_-{f_\pi} &  T^*X  }$$
Assume further that $f$ is smooth, then $SS(f^!(F))=SS(f^*(F)) = f^*SS(F)$. Hence we have:
\begin{prop}
	\label{descentss}
	In the context of Theorem~\ref{descent}, let $\Lambda \subset T^*X(*)$. For any $I \in \mathscr{I}^{\triangleright},$ put $f_I:= X(I \to *): X(I) \to X(*)$. Then the functor: 
	$Sh_{X,\Lambda}: \mathscr{I}^{\triangleright,op} \to \Cat_\oo,$ by $I \mapsto Sh_{f_I^*\Lambda}(X(I))$ is a limit diagram.
\end{prop}

\subsection{Fourier and Radon transform}

Let $V$ be a complex vector space, denote by $Sh_{\mathbb{R}^+}(V) \subset Sh(V)$ the fully subcategory of conical sheaves, i.e sheaves that are locally constant along $\mathbb{R}^{+}$-orbits.
The \textit{Fourier-Sato transform} is by definition  $\TT:= q_{2!}q_1^*[\dim_\C V]: Sh_{\mathbb{R}^+}(V) \to Sh_{\mathbb{R}^+}(V^*)$ 
$$\xymatrix{
	V   & Q \ar[l]_{q_1} \ar[r]^{q_2}  & V^* , }$$
where $Q=\{(x,y) \in V \times V^* | \textup{ Re} ( \langle x,y \rangle) \leq 0 \}.$ The functor $\TT$ is an equivalence between $\oo$-categories (with this shift, $\TT$ also preserves perverse sheaves).

Let $G$ be a reductive group, the \textit{Radon transform} and \textit{inverse Radon transform}  are by definition $R:=s_*r^!, \check{R}:=r_!s^*$: 
$$ \xymatrix{  \frac{N\backslash G/N}{T}  &    G/_{\Ad}B    \ar[r]^-r \ar[l]_-{s}  &      G/G              }$$

 Let $P$ be a parabolic subgroup of a reductive group $G$ with Levi factor $L$, define the \textit{parabolic restriction} with respect to $\frp$ to be the functor $\Res_\frp:=q_*p^!: Sh(\g/G) \to Sh(\frl/L)$:
\begin{equation}
\xymatrix{
	\frl/L   & \frp/P \ar[l]_q \ar[r]^p  & \g/G  }
\end{equation}
And similarly $\Res_P:=q_*p^!:$
$$\xymatrix{
	L/L   & P/P \ar[l]_q \ar[r]^p  & G/G  }$$

Assume further that $P$ contains $B$, put $B_L, N_L$ the image of $B,N$ under $P \to L$. Then $B_L$ is a Borel subgroup of $L$, and $N_L$ is the nilpotent radical of $B_L$. Define the corresponding parabolic restriction for Hecke categories $\Res_P:=q_* p^! : $
$$ \xymatrix{  \frac{N_L\backslash L/N_L}{T}  & \frac{N\backslash P/N}{T}   \ar[r]^-p  \ar[l]_-q & \frac{N\backslash G/N}{T}   } $$

Statement (2) below was pointed out to us by Sam Gunningham:
\begin{prop} 
	\label{restrictioncommute}
	 \begin{enumerate} 
		\item Identify $\g^* \simeq \g$ , $ \frl^* \simeq  \frl $ via an invariant bilinear form $\kappa$,  then $\Res_\frp$ naturally commute with Fourier transformation $\TT$:
		$$\xymatrix{   Sh_{\R^+}(\g/G)   \ar[r]^{\Res_\frp}  \ar[d]_{\TT}^{\sim}  &      Sh_{\R^+}(\frl/L) \ar[d]_{\TT}^{\sim}  \\
			Sh_{\R^+}(\g/G)   \ar[r]^{\Res_\frp}   &   Sh_{\R^+}(\frl/L) } $$
		\item $\Res_P$ naturally commute with Radon transformation $R$ 
			$$\xymatrix{  Sh(G/G)   \ar[r]^{\Res_P}  \ar[d]_{R}  &     Sh(L/L)   \ar[d]_{R} \\
			Sh(\frac{N\backslash G/N}{T})    \ar[r]^{\Res_P}   & Sh(\frac{N_L\backslash L/N_L}{T})  } $$
		\end{enumerate}
\end{prop}
\begin{proof}
	(1) is \cite[Lemma 4.2]{Mi}. We include a proof for reader's convenience. Denote by $\check{p}:\g^* \to \frp^*$ and $\check{q}: \frl^* \to \frp^*$ the dual map of $p$ and $q$. Put $\frn_\frp$ the nilpotent radical of $\frp$, and $d=\dim \frn_\frp$. We have the commutative diagram:
	$$\xymatrix{ Sh_{\R^+}(\g/G) \ar[r] \ar[d]^{\TT} & Sh_{\R^+}(\g/P)  \ar[r]^{p^!} \ar[d]^{\TT} & Sh_{\R^+}(\frp/P) \ar[r]^{q_*} \ar[d]^{\TT} & Sh_{\R^+}(\frl/P) \ar[r]^{\sim} \ar[d]^{\TT} & Sh_{\R^+}(\frl/L) \ar[d]^{\TT} \\
		Sh_{\R^+}(\g^*/G) \ar[r] \ar[d]^{\kappa} & Sh_{\R^+}(\g^*/P)  \ar[r]^{\check{p}_*[-d]} \ar[d]^{\kappa} & Sh_{\R^+}(\frp^*/P) \ar[r]^{\check{q}^![d]} \ar[d]^{\kappa}& Sh_{\R^+}(\frl^*/P) \ar[r]^{\sim} \ar[d]^{\kappa} & Sh_{\R^+}(\frl^*/L) \ar[d]^{\kappa} \\
		Sh_{\R^+}(\g/G) \ar[r] \ar@{=}[d] & Sh_{\R^+}(\g/P)  \ar[r]^{\check{p}_*[-d]} \ar@{=}[d] & Sh_{\R^+}((\frg/\frn_\frp)/P) \ar[r]^{\check{q}^![d]}  & Sh_{\R^+}(\frl/P) \ar[r]^{\sim} \ar@{=}[d] & Sh_{\R^+}(\frl/L) \ar@{=}[d] \\
		Sh_{\R^+}(\g/G) \ar[r] & Sh_{\R^+}(\g/P)  \ar[r]^{p^!} & Sh_{\R^+}(\frp/P) \ar[r]^{q_*} & Sh_{\R^+}(\frl^*/P) \ar[r]^{\sim} & Sh_{\R^+}(\frl^*/L)  
	}$$
	where the top middle two squares are given by \cite[Proposition 3.7.14]{KS}, and the bottom midterm square is the base change isomorphism of the cartesian square:
	$$\xymatrix{    \frp \ar[r] \ar[d] &  \frp/\frn_\frp = \frl \ar[d] \\
		\g \ar[r]   &    \frg/\frn_\frp} $$
	(2)  The natural isomorphism is given by the diagram:
	$$ \xymatrix{ 
	\frac{N_L\backslash L/N_L}{T}  	  &   L/_{\Ad} B_L \ar[r] \ar[l]  \ar@{}[rd]|-{\square} &    L/L   \\
	\frac{N\backslash P/N}{T} \ar[u] \ar[d]	\ar@{}[rd]|-{\square}  &	P/_{\Ad} B  \ar[r] \ar[l] \ar[u] \ar[d]  	&  P/P \ar[u] \ar[d]		\\
	\frac{N\backslash G/N}{T}	  & G/_{\Ad}B \ar[r] \ar[l]  &	G/G		 } $$
\end{proof}

\subsection{Retractable substacks}
\label{rescaling}

\begin{defn}
	Let $X$ be an analytic stack, $\Lambda \subset T^*X$. A open substack $j: U \hookrightarrow X$ is \textit{retractable} w.r.t. $\Lambda$ if the restriction functor:
	$$ \xymatrix{Sh_{\Lambda}(X) \ar[r]_{\sim}^{j^*} & Sh_{\Lambda}(U)}$$
	is an equivalence.
\end{defn}	

Let $A$ be a Lie group acting on a smooth manifold $X$, $\mathcal{L} \subset T^*X$ be a closed $A$-invariant conical isotropic subset. Let $X/A$ be the quotient stack, $\pi: X \rightarrow X/A$ the natural projection and $\mu: T^*X \rightarrow \mathfrak{a}^*$ the moment map, then $\pi^*(T^*(X/A))=\mu^{-1}(0)= \coprod_{x \in X} T^*_{Ax}X$, and

\begin{prop}
The subset	$\mathcal{L}$ is contained in $\mu^{-1}(0)$. 
\end{prop} 
\begin{proof}
	Suffices to check on the smooth locus of $\mathcal{L}$, where it follows from definition of isotropic submanifold.
\end{proof}  

\begin{prop}
	\label{orbit}
	Let $U \subset X$ open subset, $F \in Sh_{\mathcal{L}}(U):=Sh_{\mathcal{L}|_U}(U)$, then $F|_{Ax \cap U}$ is locally constant for all $x \in U$.
\end{prop}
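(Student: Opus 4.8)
The plan is to prove the statement not by restricting $F$ directly to the orbit, but by pushing the microlocal estimate up to $A\times U$ along the action map. A direct restriction cannot work, and the reason is exactly the content of the previous proposition: over a point $y$ of an orbit the covectors in $SS(F)\subseteq\mathcal L\subseteq\mu^{-1}(0)=\coprod_{x\in X}T^{*}_{Ax}X$ annihilate $T_y(Ay)$, so $SS(F)$ is \emph{tangent} to the orbit there — the worst, characteristic, case for the non-characteristic inverse image theorem applied to the orbit inclusion. So the orbit direction has to be ``unfolded'' by the group action.

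First I would fix $x_0\in Ax\cap U$; it suffices to show $F|_{Ax\cap U}$ is locally constant near $x_0$. I would choose a ball neighborhood $A'$ of $e$ in $A$ and a neighborhood $U'$ of $x_0$ in $U$ small enough that the action map $a\colon A'\times U'\to U$, $a(g,y)=gy$, is defined. Since $y\mapsto gy$ is a diffeomorphism for each fixed $g$, the map $a$ is a submersion, so the microsupport estimate for smooth inverse images gives $SS(a^{-1}F)\subseteq a_d\,a_\pi^{-1}(SS\,F)$. Now the differential of $a$ in the $A$-direction, $T_gA'\to T_{gy}U$, has image the tangent space $T_{gy}(Ay)$ to the orbit through $gy$; hence any covector $\xi\in T^{*}_{gy}U$ lying in $\mu^{-1}(0)$ — equivalently, annihilating $T_{gy}(Ay)$ — pulls back under $a_d$ to a covector on $A'\times U'$ with vanishing $T^{*}A'$-component. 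Since $SS(F)\subseteq\mu^{-1}(0)$, I would conclude $SS(a^{-1}F)\subseteq\{(g,y;0,\eta)\ :\ \eta\in T^{*}_{y}U'\}$, i.e. $a^{-1}F$ has no singular codirections along $A'$.

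By the standard fact of microlocal sheaf theory that a complex on $A'\times U'$ with microsupport in the ``horizontal'' cone $\{(g,y;0,\eta)\}$ is, over a contractible $A'$, the pullback of its restriction to $\{e\}\times U'$, and since $a(e,-)=\mathrm{id}$, I would get $a^{-1}F\simeq\mathrm{pr}_{U'}^{-1}F$ near $(e,x_0)$. Restricting this isomorphism along $A'\times\{x_0\}$, and writing $\alpha_{x_0}\colon A'\to U$, $g\mapsto gx_0$, as the composite of the submersion $q\colon A'\to Ax\cap U'$ (whose differential is surjective onto the tangent spaces of $Ax\cap U'$, again by the orbit computation above) with the inclusion $Ax\cap U'\hookrightarrow U$, one finds that $q^{-1}(F|_{Ax\cap U'})\simeq\alpha_{x_0}^{-1}F$ is locally the constant complex with stalk $F_{x_0}$ near $e$; since local constancy descends along the submersion $q$, $F|_{Ax\cap U}$ is locally constant near $x_0$, as desired. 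The only genuinely subtle point is the one flagged in the first paragraph — that one must replace the orbit inclusion by the action map — after which the argument is routine bookkeeping of the estimate $SS(a^{-1}F)\subseteq a_d\,a_\pi^{-1}(SS\,F)$ together with the contractibility of $A'$.
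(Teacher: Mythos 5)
Your argument is correct and reaches the paper's conclusion from the same two microlocal inputs --- the containment $SS(F)\subseteq\mathcal{L}\subseteq\mu^{-1}(0)$, and the fact that a complex whose microsupport omits the fiber codirections of a submersion is locally a pullback along that submersion --- but it routes through a different map, and that is a real improvement. The paper applies [KS, Prop.~6.6.2] to the quotient $\pi\colon U\to X/A$ and concludes $F\simeq\pi^*F'$ locally; this is brief but tacit, since for a general Lie group action $X/A$ need not be a manifold and $\pi$ need not be a submersion in any usable sense (an irrational line on a torus is the standard counterexample). You instead use the action map $a\colon A'\times U'\to U$, which is an honest submersion between honest manifolds no matter how bad the orbit space is, show that $SS(a^{-1}F)$ has no $T^*A'$-component (the $A'$-partial of $da$ sweeps out exactly the orbit directions at $gy$, which is precisely what covectors in $\mu^{-1}(0)$ annihilate), apply the pullback criterion to $\mathrm{pr}_{U'}$, and then descend along $q\colon A'\to A'x_0$. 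Your opening remark --- that the orbit inclusion is maximally characteristic for $F$, so the non-characteristic restriction machinery cannot be invoked directly on the orbit and the group must be ``unfolded'' --- is correct and is the right conceptual explanation for why the action map appears. Two small polish items: the pullback identification from KS is a local statement, so it is cleaner to shrink $A'\times U'$ at the outset than to appeal to a global ``contractible $A'$'' version; and the target of $q$ should be taken to be the embedded submanifold $A'x_0$ rather than $Ax\cap U'$, since the full orbit is in general only immersed, though for the actions arising later in the paper this distinction is vacuous.
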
 
\begin{proof} $SS(F) \subset \mathcal{L} \subset \pi|_U^*(T^*(X/A))$, hence by \cite[Prop. 6.6.2]{KS}, in a neighborhood of $x \in U$, $F=\pi|_U^*(F')$, for some $F' \in Sh(X/A)$. 
\end{proof}

For $X$ a manifold with $\mathbb{R}^+$ action, recall that a biconical subset of $T^*X$ is a conical subset invariant under the induced $\RR^+$ action on $T^*X$. The following proposition is the most important technical result of the paper.

\begin{prop}
	\label{contraction}
	Let $X$ be a smooth manifold with $\mathbb{R}^{+}$ action, and $j: U \hookrightarrow X$ an open embedding, such that $U \cap \mathbb{R}^{+}x$ is contractible, for any $x \in X$. Then $U$ is retractile w.r.t any closed biconical isotropic subset $\mathcal{L}$ of $T^*X$. 
\end{prop}

\begin{proof} By Proposition~\ref{orbit}, any $F \in Sh_{\mathcal{L}}(X)$ is conic, i.e. satisfies $F|_{\mathbb{R}^{+}x}$ is locally constant for all $x \in X$. Hence for $F_1,F_2 \in Sh_{\mathcal{L}}(X)$ by \cite[Prop. 3.7.4(iii), Cor. 3.7.3]{KS}, $Hom_{Sh_{\mathcal{L}}(X)}(F_1,F_2) \xrightarrow{\simeq} Hom_{Sh_{\mathcal{L}}(U)}(j^*F_1,j^*F_2)$ is an isomorphism, hence $j^*$ is fully faithful. Let $F' \in Sh_{\mathcal{L}}(U)$, then by Proposition~\ref{orbit}, for any $x \in U$, $F'|_{\mathbb{R}^{+}x \cap U}$ is locally constant. We have natural maps
	\[
	\xymatrix{
		U \times \mathbb{R}^{+} \ar[d]^{\tilde{j}} \ar[dr]^{a'} & U \ar[l]_{i'} \ar[d]^j \\
		X \times \mathbb{R}^{+} \ar@<-1ex>[r]_p  \ar@<1ex>[r]^a & X \ar[l]|i
	}
	\]
	where $\tilde{j}=j \times Id$, $a$ is the action map, $p$ is the projection, $i(resp. \; i')$ is inclusion to $X(resp. \; U) \times \{1\}$. $a':=a \circ \tilde{j} : U \times \mathbb{R}^{+} \rightarrow X$, then $a'$ is $\mathbb{R}^{+}$-equivariant, has contractible fibers and $F' \boxtimes k_{\mathbb{R}^{+}}$ is constructible along the fibers of $a'$. Define $F:=a'_*(F' \boxtimes k_{\mathbb{R}^{+}}) \in Sh(X)$, then $F$ is conic by \cite[Prop. 3.7.4(ii)]{KS} . There is a chain of isomorphisms 
	\begin{equation}
	\label{chain}
	F' \simeq i'^*(F' \boxtimes k_{\mathbb{R}^{+}})  \simeq i'^*a'^*(F) \simeq i'^* \tilde{j}^*a^*(F) \simeq i'^*\tilde{j}^*p^*(F)  \simeq j^*(F)
	\end{equation}
	where the second isomorphism is by \cite[Prop. 2.7.8]{KS}, the fourth isomorphism is by \cite[Prop. 3.7.2]{KS}. Now $a'$ is smooth and surjective, and $SS(a'^*F)=SS(F' \boxtimes {k}_{\mathbb{R}^{+}}) \subset \mathcal{L} \times T^*_{\mathbb{R}^{+}}{\mathbb{R}^{+}}= a'^*(\mathcal{L})$ since $\mathcal{L}$ is biconical. Hence $SS(F) \subset \mathcal{L}$ by descent  \cite[Prop. 5.4.5]{KS}. Combining with (\ref{chain}), the functor $j^*$ is essentially surjective. 
\end{proof}

For any smooth manifold $U$, denote by $\mathbf{0}_U \subset T^*U$ the zero section. Fix $u \in U$, let $X$ any analytic stack, and $\Lambda \subset T^*X$. We have $\pi:U \times X \to X$, via $(u,x) \mapsto x$, and $i: X \to U \times X$ via $x \mapsto (u,x)$.
\begin{prop}
	\label{contractiblefibers}
	Assume that $U$ is contractible, then $\pi_!,\pi_*,i^!,i^*$ maps $Sh_{\mathbf{0}_U \times \Lambda}(U \times X)$ isomorphically onto $Sh_{\Lambda}(X)$, and $\pi^!,\pi^*$ maps $Sh_{\Lambda}(X)$ isomorphically onto $Sh_{\mathbf{0}_U \times \Lambda}(U \times X)$.
\end{prop}
\begin{proof}
By \cite[Prop. 5.4.5(ii)]{KS} we see that $Sh_{ \mathbf{0}_U \times T*X } (_U \times X)$ exactly consist of the sheaves on $U \times X$ which are contractible along fibers of $\pi$. Then by \cite[Prop.2.7.8]{KS}, the functors 
$$\pi^* :Sh(X)  \longleftrightarrow Sh_{\mathbf{0}_U \times T^*X}(U \times X)  :\pi_*$$ are inverse to each other.
Then by descent $SS(\pi^*(F)) \subset   \mathbf{0}_U \times \Lambda$ if and only if $SS(F) \subset \Lambda$, hence we have inverse functors:
$$\pi^* :Sh_{\Lambda}(X)  \longleftrightarrow Sh_{\mathbf{0}_U \times \Lambda}(U \times X)  :\pi_*.$$
The statement for $(\pi_!,\pi^!)$ follows since $\pi^!=\pi^*$ up to a shift. And the statement for $i^*,i^!$ also follows since $\pi \circ i =id$.
\end{proof}

\subsection{Nilpotent cones} 
We have the identifications: 
$$H^0(T^*(\g/G)) \simeq \{(Y,X) \in \g \times \g \;|\; [Y,X]=0 \}/G,$$
$$H^0(T^*(G/G)) \simeq \{(g,X) \in G \times \g \;|\; \Ad_g(X)=X \}/G,$$
$$H^0(T^*\Bun_G(\Sigma)) \simeq \{(\mathcal{P},\phi) \;|\; \mathcal{P} \in \Bun_G(\Sigma), \phi \in H^0(\Sigma, \g_\mathcal{P} \otimes \Omega_\Sigma) \}. $$ 
Where $\Sigma$ above is a compact Riemann surface. Denote the  corresponding nilpotent cones by: 
$$\N_{T^*(\g/G)}:=\{(Y,X) \in H^0(T^*(\g/G)) \;|\; X \text{ is nilpotent.}\}$$
$$\N_{T^*(G/G)}:=\{(g,X) \in H^0(T^*(G/G)) \;|\; X \text{ is nilpotent.}\}$$
$$\N_{T^*\Bun_G(\Sigma)}:=\{(\mathcal{P},\phi) \in H^0(T^*\Bun_G(\Sigma)) \;|\; \phi \text{ is nilpotent.}\}$$
We will use nilpotent cones lying in different spaces in the paper. When the context is understood, we shall drop the indices and write all nilpotent cones as $\N$. The nilpotent cones above are known to be Lagrangian \cite{La,Gi3}, hence the sheaves with singular support in $\N$ are actually weakly constructible.

\begin{prop}  
	\label{parabolicnilpotent}
	The parabolic restrictions preserve nilpotent singular support:
	\begin{enumerate}
		\item $\Res_\frp$ takes $Sh_\N(\g/G)$ into $Sh_\N(\frl/L).$
		\item $\Res_P$ takes $Sh_\N(G/G)$ into $Sh_\N(L/L)$.
\end{enumerate}	
\end{prop}	
	\begin{proof}
			(1) Since $\N_{T^*(\g/G)}$ is a biconical Lagrangian, we have $Sh_\N(\g/G) \subset Sh_{\R^+}(\g/G)$, and $Sh(\N/G) \subset Sh_{\R^+}(\g/G)$. Then by Proposition~\ref{restrictioncommute} the following diagram commute:
		$$\xymatrix{   Sh_{\N}(\g/G)   \ar[r]^{\Res_\frp}  \ar[d]_{\TT}  &      Sh_{\R^+}(\frl/L) \ar[d]_{\TT}  \\
			Sh(\N_\g/G)   \ar[r]^{\Res_\frp}   &   Sh_{\R^+}(\frl/L), } $$
	 where the left arrow is by  \cite[Theorem 5.5.5]{KS}.
	 Now the bottom arrow takes		$Sh(\N_\g/G) $ into $ Sh(\N_\frl/L)  $ because $q(\N_\g \cap \frp)= \N_\frl$.
		Hence the top arrow lands in $Sh_\N(\frl/L)$ by \cite[Theorem 5.5.5]{KS}. \\
		(2) Recall by assumption, the coefficient $k \supset \QQ$.  Denote by $Sh_T(\frac{N\backslash G/N}{T}) \subset Sh(\frac{N\backslash G/N}{T})$ the subcategory of sheaves constructible w.r.t to the left (or equivalently right) $T$ orbits. By \cite[Theorem 4.4]{MV}, for a sheaf $F$ on $G/G$, we have $SS(F) \in \N$ if and only if $R(F) \in Sh_T(\frac{N\backslash G/N}{T})$.  Then by Proposition~\ref{restrictioncommute}, the following diagram commute:
			$$\xymatrix{  Sh_\N(G/G)   \ar[r]^{\Res_P}  \ar[d]_{R}  &     Sh(L/L)   \ar[d]_{R} \\
			Sh_T(\frac{N\backslash G/N}{T})    \ar[r]^{\Res_P}   & Sh(\frac{N_L\backslash L/N_L}{T})  } $$ 
	\end{proof}	
So $R \circ \Res_P (F) \simeq  \Res_P \circ R(F)$ lies in $Sh_T(\frac{N_L\backslash L/N_L}{T})$. By \cite[Theorem 4.4]{MV} again, we conclude that $\Res_P(F)$ lies in $Sh_\N(L/L)$.

\begin{rmk}
	\label{char0}
	Proposition~\ref{parabolicnilpotent} (2) is the only place in the paper we use the assumption $char(k)=0$. One can alternatively prove this statement by using first part of  Theorem~\ref{groupcharactersheaf}, which does not rely on Proposition~\ref{parabolicnilpotent} (2), and therefore drop the characteristic $0$ assumption.
\end{rmk}


\nc\hol{\mathit{hol}}
\nc\poly{\mathit{poly}}

\section{Twisted conjugacy classes in the loop group}
\label{twistedconj}

Fix $q \in \mathbb{C}^*$ with $|q|<1$,
and let $E=\mathbb{C}^*/q^{\mathbb{Z}}$ be the corresponding elliptic curve. 
In this section, we focus on the connected component $G_E$ of the trivial bundle in the moduli stack of semistable $G$-bundles on $E$.
We describe the geometry of $G_E$ in terms of the Lie theory of  $q$-twisted conjugacy classes in the holomorphic loop group.
 We work in the context of complex analytic stacks (see Appendix~\ref{analytic stacks} for the facts used).


\subsection{Automorphism groups}
\label{automorphismgroups}

The aim of this subsection is to calculate the automorphism groups of semisimple semistable bundles. The main result is Corollary~\ref{automorphism}, stating that the automorphism group can be calculated in terms of affine root systems. This was previously obtained in~\cite[Theorem 5.6]{BEG}, though the approaches to the component groups differ somewhat. Our approach makes the role of the affine Weyl group transparent. 

We adopt Notation~\ref{lienotation}.

Let $\Bun_G(E)$ be the moduli stack of $G$-bundles on $E$. Let $L_{\hol}G$ be the holomorphic loop group of holomorphic maps $g(z):\CC^*\to G$. 
It  acts on itself by $q$-twisted conjugation 
$$
Ad'_{k(z)} g(z) := k(qz)g(z)k(z)^{-1}
$$

For any  $g(z) \in L_{hol}G$, we can define a $G$-bundle 
$$
\xymatrix{
\mathcal {P}_{g(z)}:= \mathbb{C}^* \times_{q^\mathbb{Z}} G \ar[r] & E = \mathbb{C}^*/q^\ZZ
}
$$ 
where the $q^\mathbb{Z}$-action of  is given by 
$$
q\cdot (z,x) = (qz,g(z)x)
$$ 
Note if $g(z), h(z)$ are $q$-twisted conjugate, then their associated bundles
$\mathcal {P}_{g(z)}, \mathcal {P}_{h(z)}$ are isomorphic.

The automorphism group of $\mathcal {P}_{g(z)}$ admits the description
$$Aut(\mathcal {P}_{g(z)}) \simeq \{k(z)|k(qz) =g(z)k(z)g(z)^{-1}\}=C_{L_{\hol}G}(g(z))
$$ 
 as a $q$-twisted centralizer, since the automorphisms of  $\mathcal {P}_{g(z)}$ are isomorphic to the automorphisms of the corresponding $q^\mathbb{Z}$-equivariant  $G$-bundle over $\mathbb{C}^*$.
 
 Since any $G$-bundle on $\CC^*$ is trivializable,
  we have an isomorphism of groupoids 
 $$
 L_{\hol}G/'L_{\hol}G\simeq \Bun_G(E)(\mathbb{C})
 $$
where $/'$ donotes the quotient with respect to $Ad'$.
For $s \in T$, set $$G_s:=C_{L_{hol}G}(s).$$
  Note that $G_s$ is preserved by $q$-twisted conjugation on itself: for $f(z),g(z) \in G_s, f(qz) g(z) f(z)^{-1} \in G_s$ by direct calculation. 

For the moment, $G_s$ is simply an abstract group. In the rest of this subsection, we will calculate  $G_s$ explicitly and equip it with the structure of  algebraic group, which is compatible with the one coming from the automorphism group of a $G$-bundle.

\subsubsection{Calculation of $G_s^0$}
Fix $s \in T$. We start by equipping $G_s$ with a structure of an algebraic group and calculate its neutral component $G_s^0$, the result is given in Corollary~\ref{Gs0}.\\

Recall $LG \subset L_{\hol} G$ denote the subgroup of polynomial loops $g(z) :\CC^*\to G$.
We will regard it as an ind-scheme, more specifically, as the increasing union of its closed subschemes of prescribed zeros and poles.
Let $L\fg$ denote its Lie algebra.

We will begin with $G=GL_N$. Let $T_N \subset GL_N$ be the invertible diagonal matrices. 

\begin{lem}  \label{twist: qde}
Let $f: \mathbb{C}^* \rightarrow \mathbb{C}$ be a holomorphic function. Assume that $f(qz) =af(z)$, for some $a \in \mathbb{C}$. If $a \in q^{\mathbb{Z}}$, then $f(z) =cz^n$, for some $c \in \mathbb{C}$ and $n=log_q(a)$; otherwise, $f(z) \equiv 0$.
\end{lem}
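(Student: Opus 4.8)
The plan is to analyze the Laurent expansion of $f$ on $\mathbb{C}^*$. Write $f(z)=\sum_{k\in\mathbb{Z}} c_k z^k$, which converges on all of $\mathbb{C}^*$ since $f$ is holomorphic there. The functional equation $f(qz)=af(z)$ translates termwise into $c_k q^k = a\, c_k$ for every $k\in\mathbb{Z}$, i.e. $c_k(q^k-a)=0$. Since $|q|<1$, the values $q^k$ for distinct $k\in\mathbb{Z}$ are pairwise distinct, so there is at most one index $k$ with $q^k=a$.

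From this the two cases are immediate. If $a=q^n$ for some (necessarily unique) $n\in\mathbb{Z}$, then $c_k=0$ for all $k\neq n$, so $f(z)=c_n z^n$; and $n=\log_q(a)$ by definition. If $a\notin q^{\mathbb{Z}}$, then $q^k\neq a$ for every $k$, forcing $c_k=0$ for all $k$, hence $f\equiv 0$. (The case $a=0$ is subsumed in the second: $q^k\neq 0$ always, so $f\equiv 0$.)

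There is essentially no obstacle here: the only point requiring a word of justification is the termwise reading of the functional equation, which follows from uniqueness of Laurent coefficients (equivalently, from the fact that the Laurent series of $f(qz)$ is $\sum_k c_k q^k z^k$, obtained by substitution in the absolutely convergent series on any compact annulus). I would simply remark that comparing Laurent coefficients of $f(qz)$ and $af(z)$ gives $c_k q^k=a c_k$ for all $k$, and conclude as above.
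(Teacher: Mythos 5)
Your proof is correct and is exactly the argument the paper has in mind — the paper's own proof is the one-liner ``Follows from an elementary comparison of the coefficients of the Laurent expansion of $f$,'' which you have simply written out in full, including the needed observation that $|q|<1$ makes the $q^k$ pairwise distinct.
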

\begin{proof}
Follows from an elementary comparison of the coefficients of  the Laurent expansion of $f$. 
\end{proof}

\begin{prop}
Let $s=diag(\lambda_1, \lambda_2, ... ,\lambda_N) \in T_N$. 

Let $I_s$ consist of those 
$(i,j)$ such that $\lambda_i/\lambda_j \in q^\mathbb{Z}$, where $n_{ij}=\log_q(\lambda_i/\lambda_j)$.

Set $\mathfrak{gl}_{N,s}=\bigoplus_{(i,j)\in I_s} \mathbb{C}z^{n_{ij}}E_{ij} \subset L_\poly \mathfrak{gl}_N$,
where $E_{ij} \in \mathfrak{g}_N$ is the elementary matrix with non-zero  $(i,j)$-entry.

Then under the standard embedding $GL_N \rightarrow \mathfrak{gl}_N$ as invertible matrices, $GL_{N,s}$ consists of the invertible matrices in $\mathfrak{gl}_{N,s}$.
\end{prop}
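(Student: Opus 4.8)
The plan is to translate the twisted centralizer condition into a system of scalar $q$-difference equations, one for each matrix entry, and then feed these equations into Lemma~\ref{twist: qde}. First I would unwind the definitions. By the description of $G_s$ in this subsection, $GL_{N,s} = C_{L_{hol}GL_N}(s) = \{k(z) \in L_{hol}GL_N \mid k(qz) = s\,k(z)\,s^{-1}\}$. Writing $k(z) = (k_{ij}(z))$, where each $k_{ij}$ is holomorphic on $\mathbb{C}^*$, and using that $s = \mathrm{diag}(\lambda_1,\dots,\lambda_N)$ is diagonal, the relation $k(qz) = s\,k(z)\,s^{-1}$ is equivalent to the collection of entrywise identities $k_{ij}(qz) = (\lambda_i/\lambda_j)\,k_{ij}(z)$, $1 \le i,j \le N$.

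Next I would apply Lemma~\ref{twist: qde} with $a = \lambda_i/\lambda_j$. It gives: if $(i,j) \in I_s$, i.e. $\lambda_i/\lambda_j \in q^{\mathbb{Z}}$, then $k_{ij}(z) = c_{ij} z^{n_{ij}}$ for some $c_{ij} \in \mathbb{C}$ with $n_{ij} = \log_q(\lambda_i/\lambda_j)$; otherwise $k_{ij} \equiv 0$. Thus a matrix-valued holomorphic function $k(z)$ on $\mathbb{C}^*$ satisfies $k(qz) = s\,k(z)\,s^{-1}$ if and only if $k(z) \in \mathfrak{gl}_{N,s}$, viewed as a (possibly non-invertible) matrix. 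Intersecting with the invertibility built into $L_{hol}GL_N$ (that is, $\det k(z) \neq 0$ for all $z \in \mathbb{C}^*$) yields exactly the invertible elements of $\mathfrak{gl}_{N,s}$, which is the assertion.

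Finally I would note that the resulting set is visibly a group, consistent with $GL_{N,s}$ being one: $\mathfrak{gl}_{N,s}$ is closed under matrix multiplication, since $(i,j),(j,k) \in I_s$ forces $(i,k) \in I_s$ with $n_{ik} = n_{ij} + n_{jk}$, whence $z^{n_{ij}}E_{ij}\cdot z^{n_{jk}}E_{jk} = z^{n_{ik}}E_{ik} \in \mathfrak{gl}_{N,s}$; and if $k \in \mathfrak{gl}_{N,s}$ is invertible in $L_{hol}GL_N$, then applying the equivalence above to $k^{-1}$ (which satisfies $k^{-1}(qz) = s\,k^{-1}(z)\,s^{-1}$) shows $k^{-1} \in \mathfrak{gl}_{N,s}$. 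There is no real obstacle here: the proposition is essentially a repackaging of Lemma~\ref{twist: qde}. The only point requiring care is the meaning of ``invertible matrix in $\mathfrak{gl}_{N,s}$'' — it must be read as invertibility as a holomorphic loop, equivalently pointwise on $\mathbb{C}^*$ — so that $\mathfrak{gl}_{N,s} \cap L_{hol}GL_N$ is stable under inversion and matches the group $GL_{N,s}$ coming from automorphisms of $G$-bundles.
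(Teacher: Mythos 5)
Your proof is correct and takes essentially the same route as the paper: unwind the twisted-centralizer condition $k(qz) = s\,k(z)\,s^{-1}$ entrywise into the scalar $q$-difference equations $k_{ij}(qz) = (\lambda_i/\lambda_j)k_{ij}(z)$ and apply Lemma~\ref{twist: qde} to each. The closing sanity checks (closure of $\mathfrak{gl}_{N,s}$ under multiplication and stability under inversion) are not strictly needed for the proposition as stated — $GL_{N,s}$ is a group by construction and the reductive-group assertions are deferred to the following corollary — but they do no harm and anticipate what comes next.
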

\begin{proof} 
For $g(z) = \oplus_{(i, j)} g_{ij}(z) E_{ij} \in GL_{N,s}$, observe that
  $g(qz) =s\cdot g(z) \cdot s^{-1}$ is equivalent to $g_{ij}(qz) = (\lambda_i/\lambda_j) g_{ij}(z)$ for all $(i,j)$. By the previous lemma, $g_{ij}(z)=cz^{n_{ij}}$ if $\lambda_i/\lambda_j = q^{n_{ij}}$ and $g_{ij}(z)=0$ otherwise. In particular, $g_{ii}(z)$ is constant. 
\end{proof}

\begin{cor}
$GL_{N,s}\subset L_\hol GL_N$ lies in $LGL_N \subset L_{\hol}GL_N$ and is Zariski-closed therein. 
With its reduced subscheme structure, $GL_{N,s}$ is a reductive algebraic group, its $q$-twisted conjugation is an algebraic action, and the evaluation map 
\[
\xymatrix{
ev_1: GL_{N,s} \ar[r] &  GL_N & g(z) \ar@{|->}[r] &  g(1)
}
\]
is an injective homomorphism of algebraic groups. Furthermore, the Lie algebra of $GL_{N,s}$ is precisely  $\mathfrak{gl}_{N,s}$.
\end{cor}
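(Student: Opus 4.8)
The plan is to deduce everything from the explicit description of $GL_{N,s}$ as the invertible elements of the finite-dimensional vector space $\mathfrak{gl}_{N,s} \subset L_\poly\mathfrak{gl}_N$ established in the previous proposition. First I would observe that since every entry $g_{ij}(z)$ of an element of $GL_{N,s}$ is a scalar multiple of the monomial $z^{n_{ij}}$ with $n_{ij}$ a \emph{fixed} integer determined by $s$, the space $\mathfrak{gl}_{N,s}$ is a finite-dimensional subspace of the ind-scheme $L_\poly GL_N$; explicitly it lies in the closed subscheme of loops with poles and zeros bounded by $\max_{ij}|n_{ij}|$, so $GL_{N,s}$ is contained in $LGL_N$. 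To see it is Zariski-closed there, I would note that the conditions "$g_{ij}(z)$ is a scalar multiple of $z^{n_{ij}}$ and $g_{ij}\equiv 0$ for $(i,j)\notin I_s$" are Zariski-closed conditions on any bounded piece of $L_\poly\mathfrak{gl}_N$, and invertibility (non-vanishing of $\det g(1)$, say) is an open condition cutting out the group inside the linear space $\mathfrak{gl}_{N,s}$.

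Next I would identify the algebraic group structure. The key point is that $\mathfrak{gl}_{N,s}$ is closed under the (pointwise) matrix product: if $g_{ij}(z)=c_{ij}z^{n_{ij}}$ and $h_{jk}(z)=d_{jk}z^{n_{jk}}$ with $(i,j),(j,k)\in I_s$, then $\lambda_i/\lambda_k=(\lambda_i/\lambda_j)(\lambda_j/\lambda_k)\in q^\ZZ$ so $(i,k)\in I_s$ and $n_{ik}=n_{ij}+n_{jk}$, whence $\sum_j g_{ij}(z)h_{jk}(z)$ is again a scalar multiple of $z^{n_{ik}}$. Thus $\mathfrak{gl}_{N,s}$ is an associative (non-unital in general, but containing the relevant idempotents) finite-dimensional $\CC$-algebra, and $GL_{N,s}$ is precisely its group of units — hence an affine algebraic group, with multiplication and inversion algebraic. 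The $q$-twisted conjugation $g(z)\mapsto k(qz)g(z)k(z)^{-1}$ restricted to $G_s$ is, under the monomial parametrization, a bilinear-then-rational operation in the coefficients, hence algebraic; the cleanest way to phrase this is via the evaluation isomorphism below, which intertwines $q$-twisted conjugation on $GL_{N,s}$ with ordinary conjugation by a fixed element on its image.

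For the evaluation map, I would argue that $ev_1\colon GL_{N,s}\to GL_N$, $g(z)\mapsto g(1)$, is an algebraic homomorphism (clear, since it reads off the coefficients $c_{ij}$) and is injective because an element $g(z)=\sum c_{ij}z^{n_{ij}}E_{ij}$ is completely determined by the scalars $c_{ij}=g_{ij}(1)$. That it is a closed immersion then follows since its image is the subgroup of $GL_N$ of matrices supported on $I_s$ whose $(i,j)$ and $(j,i)$ entries are suitably constrained — concretely, $ev_1(GL_{N,s})$ is the centralizer in $GL_N$ of the semisimple element $\diag(q^{m_1},\dots,q^{m_N})$ for an appropriate choice of integers $m_i$ with $m_i-m_j=n_{ij}$, which exists exactly on each block where $I_s$ is an equivalence relation; in particular the image is reductive, so $GL_{N,s}$ is reductive. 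Finally, the statement about the Lie algebra is immediate: $GL_{N,s}$ is open in the vector space $\mathfrak{gl}_{N,s}$, so its tangent space at the identity is $\mathfrak{gl}_{N,s}$, which matches the Lie bracket computation $[z^{n_{ij}}E_{ij}, z^{n_{kl}}E_{kl}]$ staying inside $\mathfrak{gl}_{N,s}$ by the same additivity of exponents.

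I expect the only real subtlety — and the main obstacle worth stating — to be the bookkeeping needed to produce the auxiliary diagonal element whose centralizer in $GL_N$ equals $ev_1(GL_{N,s})$: the relation $I_s$ need not be an equivalence relation on $\{1,\dots,N\}$ globally (it is, once one groups indices by the archimedean absolute value $|\lambda_i|$, since $\lambda_i/\lambda_j\in q^\ZZ$ forces $|\lambda_i|=|\lambda_j||q|^{n_{ij}}$), so one must first decompose into blocks on which $I_s$ \emph{is} an equivalence relation and handle each block separately. Once that is observed, identifying $ev_1(GL_{N,s})$ with a genuine centralizer (hence a Levi subgroup of $GL_N$) and transporting all structures through $ev_1$ makes reductivity, algebraicity of the $q$-twisted action, and the closed-immersion claim essentially formal.
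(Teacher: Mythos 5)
Your overall strategy — exhibit $GL_{N,s}$ as the unit group of the finite-dimensional associative $\CC$-algebra $\mathfrak{gl}_{N,s}$ and transport the structure through $ev_1$ — is correct and is the argument the paper leaves implicit, but the final paragraph contains two errors. The first is that $I_s$ \emph{is} an equivalence relation on $\{1,\dots,N\}$: reflexivity and symmetry are immediate, and you proved transitivity yourself when checking that $\mathfrak{gl}_{N,s}$ is closed under matrix multiplication. So the stated worry is unfounded, and the proposed remedy of grouping indices by $|\lambda_i|$ would actually break things, since within an $I_s$-class $|\lambda_i|$ is \emph{not} constant: $|\lambda_i|/|\lambda_j| = |q|^{n_{ij}} \neq 1$ whenever $n_{ij}\neq 0$. (A smaller slip: $\mathfrak{gl}_{N,s}$ is unital, since $(i,i)\in I_s$ with $n_{ii}=0$.)

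The second error is more substantive: the centralizer of your $D = \mathrm{diag}(q^{m_1},\dots,q^{m_N})$ is not $ev_1(GL_{N,s})$. Since $|q|<1$ forces $q$ to be non-torsion, $q^{m_i}=q^{m_j}$ iff $m_i=m_j$; but for $(i,j)$ in the same $I_s$-class with $n_{ij}\neq 0$ you have $m_i\neq m_j$, so $C_{GL_N}(D)$ kills the $(i,j)$ entry and is strictly smaller than $ev_1(GL_{N,s}) = \{A\in GL_N : a_{ij}=0 \text{ unless } (i,j)\in I_s\}$. The fix is to take $D$ \emph{constant on each $I_s$-class} $C_k$ and distinct across classes, so that $C_{GL_N}(D)=\prod_k GL(|C_k|)$. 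Better still, bypass centralizers entirely: since $I_s$ is an equivalence relation, $ev_1$ is an isomorphism of associative algebras from $\mathfrak{gl}_{N,s}$ onto the block-diagonal subalgebra $\prod_k M_{|C_k|}(\CC)\subset\mathfrak{gl}_N$, hence induces an isomorphism of algebraic groups $GL_{N,s}\simeq\prod_k GL_{|C_k|}$ on unit groups, which gives reductivity, algebraicity of the twisted action, and the Lie algebra identification all at once.
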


For a general reductive algebraic group $G$ with maximal torus $T\subset G$, choose an embedding of pairs
$i: (G,T) \rightarrow (GL_N, T_N)$. This induces embeddings $LG \subset L_\poly GL_N$, $G_s \subset GL_{N,s}$, with $G_s =LG \cap GL_{N,s}$. Hence $G_s$ is Zariski-closed in both $GL_{N,s}$ and $LG$.
Thus we have the following generalization of the previous corollary.

\begin{prop}
\label{gs}
The subgroup $G_{s}\subset L_\hol G$ lies in $LG \subset L_{\hol}G$ and is Zariski-closed therein. 
With its reduced subscheme structure, $G_{s}$ is a reductive algebraic group, its $q$-twisted conjugation is an algebraic action, and the evaluation map 
\[
\xymatrix{
ev_1: G_{s} \ar[r] &  G & g(z) \ar@{|->}[r] &  g(1)
}
\]
is an injective homomorphism of algebraic groups. 
Moreover, the natural map 
$G_s \rightarrow Aut(\mathcal{P}_s)$ is an isomorphism of algebraic groups.

\end{prop}
\begin{proof}
Only the last statement needs proof. We have a commutative diagram of abstract groups:
$$\xymatrix{
G_s \ar[rr]^{\sim} \ar@{^{(}->}[rd]_{ev_1} & & Aut(\calP) \ar[ld]^{res_0}  \\
  &  G = Aut(\calP_0)
} $$
where $\calP_0$ is the fiber of $\calP$ over $0 \in E$, and the ``$=$'' means canonical isomorphism. The two vertical maps are injective morphisms of algebraic groups, so the top arrow is also a morphism of algebraic groups.

\end{proof}

Since $G_s =LG \cap GL_{N,s}$, its Lie algebra satisfies $\mathfrak{g}_s$= $L\mathfrak{g} \cap \mathfrak{gl}_{N,s}$. More explicitly, it admits the following description. Regard the set of affine roots $\Phi_\aff$ as a subset of $Map(T,\C^*)$ via $\alpha=\alpha_0 + n \mapsto \{ s \mapsto \alpha_0(s) q^n  \}$, and put $\Phi_s:=\{ \alpha \in \Phi_\aff \, | \, \alpha(s)=1 \}$. For any $\alpha \in \Phi_\aff$, recall the definition of $\g_{\alpha}$ as in Notation~\ref{lienotation}.

\begin{prop}
	\label{calculate gs}
The Lie algebra of $G_{s}$ is precisely $\mathfrak{g}_s=\mathfrak{t} \oplus \bigoplus_{\alpha\in \Phi_s} \mathfrak{g}_\alpha \subset L \fg$.
\end{prop}

\begin{proof} $\mathfrak{gl}_{N,s}$ is a finite dimensional subalgebra of $L\mathfrak{gl}_N$ satisfying $\{X(z) \in L\mathfrak{gl}_N | X(qz)=Ad(s)X(z)\}$. So $\mathfrak{g}_s$= $L_{poly}\mathfrak{g} \cap \mathfrak{gl}_{N,s}$=    $\{X(z) \in L\mathfrak{g} | X(qz)=Ad(s)X(z)\}$.
 Write $X(z)= h(z)+ \sum\limits_{\alpha_0 \in \Phi}f_{\alpha_0}(z)$, with respect to the root decomposition of $\mathfrak{t}$,
 i.e $h(z):\mathbb{C}^* \rightarrow \mathfrak{t}, f_{\alpha_0}(z):\mathbb{C}^* \rightarrow \mathfrak{g}_{\alpha_0}$.
Now the condition $X(qz)=Ad(s)X(z)$ is equivalent to $h(qz)=h(z)$, and $f_{\alpha_0}(qz)= \alpha_0(s) f_{\alpha_0}(z)$.
By Lemma~\ref{twist: qde}
 $h(z)$ is constant function, and the only nonvanishing $f_{\alpha_0}$ are those with $\alpha_0(s)=q^{n_{\alpha_0}}$ for some $n_{\alpha_0} \in \mathbb{Z}$ and in this case $f_{\alpha_0}=z^{n_{\alpha_0}}X_{\alpha_0} \in \g_\alpha$. So  $\g_s=\t \oplus \bigoplus_{\alpha_0 \in \Phi, \alpha_0(s) \in q^\Z} \g_{\alpha_0}z^{n_{\alpha_0}} $. By compare this expression with the definition of $\g_\alpha$ for affine root $\alpha$, the proposition follows.
\end{proof}

\begin{cor}
\label{Gs0}
The subgroup
$G_s^0 \subset LG$ is generated by $T\cup \{ \exp\g_\alpha  \, |\, \alpha\in \Phi_s\}$.

\end{cor}

\begin{eg}
$G=SL_2$, and $T$ diagonal matrices, with roots $\{\alpha, -\alpha\}$
take $s=\begin{pmatrix} \sqrt{q} & 0 \\
                           0    & \sqrt{q}^{-1}
                           \end{pmatrix}
\in T$, hence $\alpha(s)=\sqrt{q}/(\sqrt{q}^{-1})=q$, so $n_{\alpha}=1$, similarly  $n_{-\alpha}=-1$, \\
we have
$X_{\alpha}= \begin{pmatrix} 0 & 1 \\
                                0 & 0
\end{pmatrix},
X_{-\alpha}= \begin{pmatrix} 0 & 0 \\
                                1 & 0
\end{pmatrix} $, then by Corollary~\ref{Gs0},
the group $G_s^0$ is generated by $T,
\exp \begin{pmatrix}  0 & bz \\
                      0 & 0
\end{pmatrix}, $ and $
\exp \begin{pmatrix} 0 & 0 \\
                  cz^{-1} & 0
\end{pmatrix}        $
and so it is equal to the subgroup
$\Big\{ \begin{pmatrix}  a & bz \\
                 cz^{-1} & d
\end{pmatrix} \Big \}$  of $LG$.
In fact, in this case we have $G_s^0=G_s=C_{LG}(s)$. 
\end{eg}

\subsubsection{Calculation of $G_s$} We proceed to calculate $G_s$. The result is given in Corollary~\ref{automorphism}, which states that the component group is controlled by the affine Weyl group. \\

For $M_1, M_2$ two smooth/complex manifolds, write $\mathit{Map}(M_1,M_2)$ for the set of  differentiable/holomorphic maps
$M_1 \to M_2$.

\begin{lem}
\label{normalizer} 
Let $M$ be a connected complex manifold.

Regard $\mathit{Map}(M,G)$ as a group, and $T \subset N_T(G) \subset G\subset \mathit{Map}(M,G)$ as subgroups of constant maps.

Then $N_{\mathit{Map}(M,G)}(T)=\mathit{Map}(M,T) \cdot N_G(T)$ as subgroups of $\mathit{Map}(M,G)$.
\end{lem}

\begin{proof}
Let $f(x) \in N_{\mathit{Map}(M,G)}(T)$. Then $f(x)Tf(x)^{-1}=T$, for any $x \in M$, hence $f(x) \in \mathit{Map}(M, N_G(T))$.  Hence $N_{\mathit{Map}(M,G)}(T) \subset \mathit{Map}(M, N_G(T)).$ Since $M$ is connected, $\mathit{Map}(M, N_G(T))=\mathit{Map}(M,T)\cdot N_G(T)$. Now $\mathit{Map}(M,T) \subset N_{\mathit{Map}(M,G)}(T),$ and $N_G(T) \subset N_{\mathit{Map}(M,G)}(T)$. Hence $\mathit{Map}(M,T) \cdot N_G(T) \subset N_{\mathit{Map}(M,G)}(T).$ 
\end{proof}

\begin{lem} 
\label{twist: normalizer gs}
$(L_{hol}T \cdot N_G(T)) \cap G_s=T \cdot C_{W_\aff}(s)$ as subgroups of $L_{hol}G$.
\end{lem}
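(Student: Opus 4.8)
The plan is to reduce membership in the left‑hand side to a functional equation on $L_{hol}T$ and then solve it with Lemma~\ref{twist: qde}. First, since $\CC^*$ is connected, Lemma~\ref{normalizer} (applied with $M=\CC^*$) identifies $L_{hol}T\cdot N_G(T)$ with the normalizer $N_{L_{hol}G}(T)$; and as $N_G(T)=\bigsqcup_{w\in W}T\dot w$, every element of this group has the form $t(z)\dot w$ with $t(z)\in L_{hol}T$ and $w\in W$, where $\dot w$ is the fixed lift from Notation~\ref{lienotation}.

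Next I would compute when such an element lies in $G_s=C_{L_{hol}G}(s)$ for the $q$-twisted conjugation. Since $\dot w$ is a constant loop, $\dot w s\dot w^{-1}=w(s)$, and $T$ is abelian, the condition $(t(z)\dot w)(qz)\,s\,((t(z)\dot w)(z))^{-1}=s$ collapses to
$$
t(qz)\,t(z)^{-1}=s\,w(s)^{-1}\in T .
$$
Composing with a basis of characters $\chi\in X^*(T)$ turns this into scalar equations $\chi(t)(qz)=\chi(s\,w(s)^{-1})\,\chi(t)(z)$ for the nowhere‑vanishing holomorphic functions $\chi(t)$; by Lemma~\ref{twist: qde} this forces $\chi(s\,w(s)^{-1})\in q^\ZZ$ for every $\chi$ and pins down $\chi(t)$ up to a scalar. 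This amounts to saying that there is a (necessarily unique, since $|q|<1$) cocharacter $\mu\in X_*(T)$ with $q^\mu=s\,w(s)^{-1}$, and that then $t(z)=t_0\,z^\mu$ for an arbitrary constant $t_0\in T$. Since $W_\aff=W\ltimes X_*(T)$ acts on $T$ by $q$-twisted conjugation of the lifted loops — with $\mu\in X_*(T)$ acting by translation by $q^\mu$, because $(z^\mu)(qz)\,s\,(z^\mu)(z)^{-1}=q^\mu s$ — the equation $q^\mu=s\,w(s)^{-1}$ says exactly that $(w,\mu)$ fixes $s$, i.e. $(w,\mu)\in C_{W_\aff}(s)$. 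Hence $t(z)\dot w=t_0\cdot(z^\mu\dot w)$ is the product of $t_0\in T$ with the fixed lift of $(w,\mu)\in C_{W_\aff}(s)$, which gives the inclusion $\subseteq$.

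For the reverse inclusion, $T\subseteq G_s$ because a constant loop in $T$ trivially $q$-twisted commutes with $s$, and for $(w,\mu)\in C_{W_\aff}(s)$ the computation above read backwards shows its lift $z^\mu\dot w$ lies in $G_s$; as $G_s$ is a subgroup this yields $T\cdot C_{W_\aff}(s)\subseteq G_s$, while $T\cdot C_{W_\aff}(s)\subseteq L_{hol}T\cdot N_G(T)$ is immediate. The analytic input is entirely contained in Lemma~\ref{twist: qde}, so the only real care needed is the bookkeeping: fixing the semidirect‑product convention for $W_\aff$ and checking that the action of $W_\aff$ on $T$ under which $C_{W_\aff}(s)$ is the stabilizer of $s$ is precisely the one induced by $q$-twisted conjugation of the lifts $z^\mu\dot w$.
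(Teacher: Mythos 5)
Your proof is correct and takes the same route as the paper's: reduce the condition $t(z)\dot w\in G_s$ to the $q$-difference equation $t(qz)t(z)^{-1}=s\,w(s)^{-1}$, solve it via characters using Lemma~\ref{twist: qde} to get $t(z)=t_0 z^\mu$, and recognize $(w,\mu)\in C_{W_\aff}(s)$. The opening appeal to Lemma~\ref{normalizer} is extra for this particular lemma (the paper invokes it only afterwards, to identify $N_{G_s}(T)=T\cdot W_s$), and you spell out the factorization $t_0z^\mu$ that the paper compresses into ``$f\in X_*(T)$,'' but neither point changes the substance.
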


\begin{proof} The right hand side does not depend on the lifting of $W$ and equals $(X_*(T) \cdot N_G(T)) \cap G_s $ which naturally sits inside the left hand side. We need to show that $(L_{hol}T \cdot N_G(T)) \cap G_s  \subset X_*(T) \cdot N_G(T)$. Suppose $fw \in (L_{hol}T \cdot N_G(T)) \cap G_s$, 
for $f \in L_{hol}T, w \in N_G(T)$. Then we have $f(qz)w s(f(z)w)^{-1}=s$, i.e $f(qz)= w(s)^{-1}f(z)$.
However,  $w(s)^{-1},s \in T$, and this implies $f \in X_*(T)$ by Lemma~\ref{twist: qde}. 
\end{proof}

Let $W_s:=C_{W_\aff}(s)$ be the stabilizer of $s$ in $W_\aff$. By Lemma~\ref{normalizer} and \ref{twist: normalizer gs}, we have $N_{G_s}(T)=N_{L_{hol}G}(T) \cap G_s=
(L_{hol}T \cdot N_G(T) )\cap G_s = T \cdot W_s$ and the Weyl group $W(G_s,T) :=N_{G_s}(T)/T $ is isomorphic to  $W_s$, therefore we have:

\begin{cor}
\label{automorphism}
$G_s= G_s^0 \cdot N_{G_s}(T) =<G_s^0,\dot{w} \,|\, w \in W_s> = <T,\exp \g_\alpha, \dot{w} \,|\, \alpha \in \Phi_s, w \in W_s >.   $ 
\end{cor}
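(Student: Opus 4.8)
The plan is to deduce the statement from the general structure theory of a (possibly disconnected) reductive algebraic group, since all the substantive input has already been assembled: by Proposition~\ref{gs}, $G_s$ is a reductive algebraic group with maximal torus $T$; by Corollary~\ref{Gs0}, its identity component satisfies $G_s^0 = \langle T, \exp\g_\alpha \mid \alpha\in\Phi_s\rangle$; and by Lemmas~\ref{normalizer} and~\ref{twist: normalizer gs} (as recorded in the paragraph preceding the corollary), $N_{G_s}(T) = T\cdot W_s$ with $W_s = C_{W_\aff}(s)$, so that $N_{G_s}(T)$ is generated by $T$ together with the lifts $\dot w$ for $w\in W_s$. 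Granting these, only the first equality $G_s = G_s^0\cdot N_{G_s}(T)$ carries content; the rest is formal.

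First I would prove $G_s = G_s^0\cdot N_{G_s}(T)$. Let $g\in G_s$. Conjugation by $g$ is an algebraic automorphism of the connected reductive group $G_s^0$, so $gTg^{-1}$ is again a maximal torus of $G_s^0$. Since any two maximal tori of a connected group are conjugate by an element of that group, there is $h\in G_s^0$ with $h(gTg^{-1})h^{-1} = T$, i.e.\ $hg\in N_{G_s}(T)$, whence $g = h^{-1}(hg)\in G_s^0\cdot N_{G_s}(T)$. Then, using $N_{G_s}(T) = \langle T,\dot w\mid w\in W_s\rangle$ and $T\subset G_s^0$, one assembles
\[
G_s \;=\; G_s^0\cdot N_{G_s}(T) \;=\; \langle\, G_s^0,\ \dot w \mid w\in W_s\,\rangle \;=\; \langle\, T,\ \exp\g_\alpha,\ \dot w \mid \alpha\in\Phi_s,\ w\in W_s\,\rangle,
\]
the last equality by substituting the description of $G_s^0$ from Corollary~\ref{Gs0}.

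I do not expect a serious obstacle: the difficulty has been front-loaded into establishing that $G_s$ is reductive and into the computation of $G_s^0$ and $N_{G_s}(T)$. The only point needing a moment's care is that the chosen lift $\dot w\in LG$ of $w\in W_s$ genuinely lies in $G_s$; but Lemma~\ref{twist: normalizer gs} gives $(L_{\hol}T\cdot N_G(T))\cap G_s = T\cdot C_{W_\aff}(s)$, so each $\dot w$ differs from an element of $N_{G_s}(T)$ by an element of $T\subset G_s^0$, which is harmless for the generation statement. (Alternatively one could run the maximal-torus argument inside $GL_{N,s}$ via the embedding $(G,T)\hookrightarrow(GL_N,T_N)$, but that seems unnecessary.)
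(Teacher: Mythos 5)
Your proof is correct and matches the paper's argument: the paper likewise deduces the corollary from $N_{G_s}(T)=T\cdot W_s$ and the fact that $T$ is a maximal torus of $G_s$, silently invoking exactly the conjugacy-of-maximal-tori argument you spell out to obtain $G_s=G_s^0\cdot N_{G_s}(T)$. The only small caveat is that Proposition~\ref{gs} does not itself assert that $T$ is a maximal torus of $G_s$ — that fact is stated in the prose just before the corollary — but this is a citation detail, not a gap.
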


\begin{eg}
$G=PGL_2$, and $T$ diagonal matrices, with roots $\{\alpha, -\alpha\}$.
Take $s=\begin{pmatrix} \sqrt{q} & 0 \\
                           0    &  1
                           \end{pmatrix}$.
We have $G_s^0=T$, 
and $G_s=<T, w> \simeq T \rtimes \mathbb{Z}/2$, where  
$w=\begin{pmatrix} 0 & z \\
                               z^{-1} & 0
                           \end{pmatrix}$.
\end{eg}

\begin{rmk}
The same method can be used to calculate the automorphism group of any semisimple (not necessarily semistable) bundle.  Theta functions naturally show up in the calculation for non-semistable bundles, so in general $C_{L_{hol}G} (g(z))$ is not contained in $LG$. 
\end{rmk}

\subsubsection{Untwist twisted conjugation} The twisted conjugation of $L_{hol}G$ is very different from the usual conjugation. However, when restricted to the action of $G_s$ on itself, the twisted conjugation is isomorphic to usual conjugation:

\begin{prop}
\label{twistedconjugation}
The left multiplication by $s^{-1} :G_s \to G_s$ is a $G_s$-equivariant isomorphism of algebraic varieties, where the first action is $q$-twisted conjugation, and second action is usual conjugation. In other words, we have an isomorphism of stacks 
$s^{-1}: G_s/'G_s\to G_s/G_s$. 
\end{prop}
\begin{proof}
For any $k(z) \in G_s$, we have $k(qz) \,  s \, k(z)^{-1}=s$, hence $s^{-1}  Ad'_{k(z)} g(z)= s^{-1} k(qz) g(z) k(z)^{-1}= k(z) s^{-1}g(z) k(z)^{-1}=Ad_{k(z)} (s^{-1} g(z))$.
\end{proof}

 The space $G_s^0$ is stable under the twisted conjugation by $G_s$. Hence $G_s^0/'G_s$ has the usual properties of an adjoint quotient (on its neutral component.)

\begin{cor}
 $\mathbb{C}[G_s^0]^{G_s}=\mathbb{C}[T]^{W_s}  $, where the invariants are taken w.r.t the twisted conjugation.
\end{cor}

So there is a map $ \chi'_s: G_s^0 \rightarrow T//'W_s:=\Spec \:\mathbb{C}[T]^{W_s}  $.  Let $U \subset T$ be a $W_s$-invariant open subset, let $V:={\chi'_s}^{-1}(U//'W_s)$ 

\begin{defn}
Let $S$ be a topological space, and $A \subset S$  a subset. We say $A$ is \textit{abundant} (in $S$)  if the only open subset of  $S$ containing $A$ is $S$.
\end{defn}

Note that $A \subset S$ is abundant if and only if $A$ contains all the closed points of $S$. Examples of abundant subsets that we will use are given in the next corollary. 

\begin{cor}
\label{abundant}
The image of $U$ in $|V/'G_s|$ is abundant. 
\end{cor}
\begin{proof}
	All closed conjugation orbits are semisimple. By Proposition~\ref{twistedconjugation} all closed twisted conjugation orbits orbits are semisimple. 
\end{proof}

Let $T^{s-reg} \subset T$ be the locus where the action of $W_s$ is free.

\begin{cor}
Assume further that $U \subset T^{s-reg}$. Then  
$$V/'{G_s}  \xleftarrow{\simeq} U/'N_{G_s}(T)  \xrightarrow{\simeq} (U \times BT)/'W_s.   $$ 
\end{cor}

\subsection{Etale charts}
\label{etalecharts}
In this section, we will define some \'etale charts of $G_E$. 
The main result in this section is Theorem~\ref{etalesurjective}. Facts about semistable bundles on elliptic curves are collected in Appendix~\ref{ssbundles}.
\subsubsection{Definition and representability}
There are three, mutually commuting actions of $q^\mathbb{Z}, G_s, G$ on 
$\mathbb{C}^* \times G_s^0 \times G $:
$$\xymatrixrowsep{-0.in}\xymatrix{
q \cdot (z,h,g):= (qz, h, h(z)g), & q \in q^{\mathbb{Z}}\\
 k\cdot(z,h,g):= (z,Ad^q_k(h),k(z)g),  &  k \in G_s\\
 g'\cdot(z,h,g):= (z,h,gg'^{-1}),  &  g' \in G
}$$
Put  $\mathscr{P}_s:= (\mathbb{C}^* \times G_s^0 \times G) /q^{\mathbb{Z}}  $.
Then $\mathscr{P}_s$ maps naturally to $E \times G_s^0$ and it is a $G_s$-equivariant (with respect to the twisted conjugation on $G_s$) principal $G$-bundle. Hence $\mathscr{P}_s/G_s \to E \times G^0_s/'G_s$ is a principal $G$-bundle, and therefore it defines a map of stacks $p_s: G_s^0/'G_s \rightarrow \Bun_G(E)$.

Recall the notation $G_E:= \Bun_G^{0,ss}(E)$ for the stack of degree $0$ semistable $G$-bundles.

\begin{prop}
The image of $p_s$ lies in $G_E$.
\end{prop}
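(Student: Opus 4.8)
The plan is to show that every $G$-bundle in the image of $p_s$ is semistable of degree zero, by reducing to $\CC$-points and invoking the description of semistable bundles recalled in Appendix~\ref{ssbundles}. First I would note that semistability and degree are conditions that can be checked on geometric points of the base $S$, and that the pullback of the universal bundle $\mathscr P_s$ along any $\CC$-point $\Spec\CC\to G_s^0/G_s$ is, by construction, the bundle $\calP_{h(z)}$ associated via $q$-twisted conjugation to some element $h(z)\in G_s^0\subset LG\subset L_{\hol}G$. So it suffices to prove: for every $h(z)\in G_s^0$, the bundle $\calP_{h(z)}$ on $E$ is semistable of degree zero.

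The key point is that $G_s^0$ is by Corollary~\ref{Gs0} generated by $T$ and the root subgroups $\exp\g_\alpha$ for $\alpha\in\Phi_s$, i.e.\ it is a (finite-dimensional) reductive subgroup of $LG$ all of whose elements are ``of degree zero'' in a precise sense: the associated loops $h(z)$ lie in the closed subscheme $GL_{N,s}$, whose elements are matrices with entries $cz^{n_{ij}}$. I would argue that the bundle $\calP_s$ attached to the constant loop $s\in T$ is semistable of degree zero — this is essentially the statement that a bundle induced from a cocharacter/semistable torus bundle is semistable, and degree zero follows from $s\in T$ with $G$ semisimple simply-connected — and then that $q$-twisted conjugation does not change the isomorphism class beyond the $G_s$-orbit, while moving along the connected group $G_s^0$ deforms $\calP_s$ within a connected family. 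Since $G_E=\Bun_G(E)^{0,ss}$ is open and closed in $\Bun_G(E)$ (semistability is open, degree is locally constant), and $G_s^0$ is connected containing the point mapping to $\calP_s\in G_E$, the whole image of $G_s^0$ lands in $G_E$; the $G_s$-action only permutes isomorphism classes, so $p_s\colon G_s^0/G_s\to\Bun_G(E)$ factors through $G_E$.

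Concretely I would carry this out in the following order. (1) Reduce to $\CC$-points using that $G_E\subset\Bun_G(E)$ is open (semistability) and closed (degree is locally constant and semistability is also closed on the degree-zero locus, or simply: the connected component of the trivial bundle is a union of connected components of $\Bun_G(E)$, hence open and closed). (2) Identify the bundle attached to a $\CC$-point $h(z)\in G_s^0$ with $\calP_{h(z)}$, using the construction of $\mathscr P_s$ as $(\CC^*\times G_s^0\times G)/q^\ZZ$. (3) Observe the trivial bundle $\calP_1$ (corresponding to $h(z)=1$, the identity) lies in $G_E$ by definition, and $\{1\}$ lies in the connected set $G_s^0$. (4) Since $G_s^0$ is connected and the map $G_s^0\to\Bun_G(E)(\CC)$ (on $\CC$-points, or better the map of stacks $G_s^0\to\Bun_G(E)$ from the quotient presentation) is continuous/morphic, its image lies in the connected component $G_E$ of the trivial bundle. (5) Conclude the map from the quotient stack factors through $G_E$ since the $G_s$-action preserves the image.

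The main obstacle I anticipate is step (4): making precise that ``the image of a connected space under $p_s$ lies in one connected component'' at the level of stacks rather than just sets of isomorphism classes — one wants that $p_s$ restricted to $G_s^0$ (viewed as an atlas, before quotienting) actually lands in the open-and-closed substack $G_E$, which requires knowing $G_s^0$ is connected as a scheme (true, it is a connected algebraic group by Proposition~\ref{connected}/Corollary~\ref{Gs0}) and that $\Bun_G(E)=G_E\sqcup(\text{rest})$ as stacks with $G_E$ open. An alternative, perhaps cleaner, route avoiding connectedness subtleties is to invoke directly the classification of semistable bundles (Appendix~\ref{ssbundles}, cf.\ Friedman--Morgan): a bundle $\calP_{h(z)}$ with $h(z)\in G_s^0$ has structure group reducing to the reductive group $G_s$ whose loops have ``integral'' exponents, and such bundles are exactly the degree-zero semistable ones — one would then just cite that $G_s^0$, being generated by $T$ and affine root subgroups with $\alpha\in\Phi_s$, produces only semistable degree-zero bundles. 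I would present both and use whichever the appendix makes shortest.
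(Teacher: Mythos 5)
Your proposal has a genuine gap in steps (1) and (4): you claim that $G_E=\Bun_G(E)^{0,ss}$ is \emph{open and closed} in $\Bun_G(E)$, and then argue that connectedness of $G_s^0$ forces its image to land in $G_E$ once it hits the trivial bundle. But semistability is only an \emph{open} condition, not a closed one: the unstable locus of $\Bun_G(E)^0$ is a nonempty closed substack (already visible for $SL_2$, where $\Bun_{SL_2}(E)$ is connected and contains plenty of unstable bundles), so $G_E$ is a proper open substack of $\Bun_G(E)^0$, \emph{not} a union of connected components. Consequently ``connected source $\Rightarrow$ image in one connected component'' only delivers the degree-zero conclusion; it does nothing for semistability, and your step (4) does not go through. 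Your parenthetical remark that ``semistability is also closed on the degree-zero locus'' is simply false. Your ``alternative cleaner route'' (reduction of structure group to $G_s$ forces semistability) is also not a theorem you can cite from Appendix~\ref{ssbundles}; the appendix characterizes semistable bundles via closures (Proposition~\ref{semistable}), not via structure-group reductions.

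What the paper actually does is genuinely different and worth understanding: connectedness of $G_s^0/G_s$ gives degree zero (this part of your argument is fine). For semistability it then uses Corollary~\ref{abundant}, which (taking $U=T$, $V=G_s^0$) says the image of $T$ is \emph{abundant} in $|G_s^0/G_s|$, i.e.\ every point of $|G_s^0/G_s|$ has a point of $T$ in its closure. Since $T$ maps to degree-zero semisimple semistable bundles, the closure of every bundle in the image contains a semisimple semistable one, and Proposition~\ref{semistable}(2a) then gives semistability. In short: the correct mechanism is not topological openness/closedness of $G_E$, but the Jordan-decomposition-style characterization of semistability through specialization to semisimple bundles, combined with the orbit-closure structure of the adjoint quotient $G_s^0/G_s$.
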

\begin{proof}
 Let $x \in |G_s^0/'G_s|$. Then by Corollary~\ref{abundant}, there is a $t \in T$, such that $t \in \overline{\{x\}}$. Hence $p_s(t) \in \overline{\{p_s(x)\}}$. Now $p_s(t) \in G_E$ because it is in the image of $T_E \to G_E$.  Hence $p_s(x) \in G_E$, since $G_E \subset \Bun_G(E)$ is open.
\end{proof}

\begin{prop}
The map $p_s$ is representable. 
\end{prop}
\begin{proof}
Let $G_{E,0}$ be the stack classifying pairs $(\mathcal{P}, \beta)$, where $\mathcal{P}$ is a semistable $G$ bundle of degree 0, and $\beta$ is a trivialization of $\mathcal{P}$ at $0 \in E$.  $G_{E,0}$ is representable by Proposition~\ref{representable}. The group $G$ acts on $G_{E,0}$ by changing the trivialization and $G_{E,0}/G=G_E$. 
There is a natural map $p_s': G_s^0 \rightarrow G_{E,0}$ defined by $\mathscr{P}_s$ with the natural trivialization that identifies the fiber over $0 \in E$ with the fiber over $1 \in \mathbb{C}^*$. The map $p_s'$ is $G_s$-equivariant, where $G_s$ acts on $G_{E,0}$ via $ev_1: G_s \rightarrow G$. So $p_s'$ induces $\overline{p_s'}: 	G_s^0/'G_s \rightarrow G_{E,0}/G_s$. Hence $\overline{p_s'}$ is representable.
 We have the following commutative diagram of stacks:
\[
\xymatrix{
 G_s^0/'G_s      \ar[r]^{\overline{p_s'}}  \ar[rrd]_{p_s}       &       G_{E,0}/G_s  \ar[r]   &   G_{E,0}/G \ar[d]_{\simeq}        \\
                                                                     &                                         &     G_E  
}
\]
By Proposition~\ref{gs}, $ev_1: G_s \rightarrow G$ is injective, so the top arrows are representable and hence $p_s$ is representable.
\end{proof}

\subsubsection{1-shifted symplectic stacks}

In this section, we show that the morphism $p_s$ is a symplectomorphism.
\begin{defn}Let $X$ be a smooth analytic stack,  $TX$ its tangent complex. 
\begin{enumerate} 
\item A \textit{weak 1-shifted symplectic structure} is a 1-shifted non-degenerate $2$-form $\omega_X$, i.e. a non-degenerate $\calO_X-$bilinear antisymmetric pairing 
$$ \omega_X:\xymatrix{TX[-1] \times TX[-1] \ar[r] & \calO_X[-1] }.$$

\item A \textit{symplectomorphism} $f : (X, \omega_X) \rightarrow (Y, \omega_Y)$ between smooth stacks with weak 1-shifted symplectic structure is a morphism of stacks $f: X \rightarrow Y$ together with an isomorphism $f^*\omega_Y \simeq \omega_X$.
\end{enumerate}
\end{defn}

\begin{rmk}
To define the actual shifted symplectic structure, the notion of closed forms is needed and requires a more careful definition, see \cite{PTVV}. The weak version above is sufficient for our purpose. 
For smooth stacks with positive dimensional automorphism group, $n=1$ is the only possible value for a $n$-shifted symplectic structure to exist.
\end{rmk}

Shifted symplectic structures relate the stacky and infinitesimal behaviors:
\begin{prop}
\label{symplectomorphismetale}
Let $f: (X, \omega_X) \to (Y, \omega_Y)$ be a symplectomorphism, and $x \in X$. Assume that $f_x: Aut(x)^0 \to Aut(f(x))^0$ is an isomorphism, then $f $ is \'etale at $x$.
\end{prop}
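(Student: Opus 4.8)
The plan is to analyze \'etaleness via the tangent complex. Since both $X$ and $Y$ are smooth stacks, a morphism $f\colon X \to Y$ is \'etale at $x$ precisely when the induced map on tangent complexes $T_xX \to T_{f(x)}Y$ is a quasi-isomorphism. For a smooth stack, the tangent complex at a point $x$ is concentrated in degrees $[-1, 0]$ (for a 1-stack): $H^{-1}(T_xX) = \Lie\,\Aut(x)$ and $H^0(T_xX)$ is the ``honest'' tangent space to the coarse moduli direction. Thus I need to show $f$ induces an isomorphism on both cohomology groups.

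First I would treat $H^{-1}$. The hypothesis says $f_x\colon \Aut(x)^0 \to \Aut(f(x))^0$ is an isomorphism of algebraic groups, so it induces an isomorphism on Lie algebras $\Lie\,\Aut(x) \isom \Lie\,\Aut(f(x))$, which is exactly the map $H^{-1}(T_xX) \to H^{-1}(T_{f(x)}Y)$. So $H^{-1}$ is handled for free. The remaining and crucial point is $H^0$. Here is where the symplectic structure enters: because $f$ is a symplectomorphism, we have $f^*\omega_Y \simeq \omega_X$, and nondegeneracy of a weak 1-shifted symplectic form gives, at each point, a perfect pairing $T_xX[-1] \times T_xX[-1] \to \calO_X[-1]$, i.e.\ an identification $T_xX \simeq (T_xX)^\vee[-1]$ (the ``self-duality up to shift''). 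Concretely this swaps the two cohomology groups: it identifies $H^0(T_xX)$ with the dual $H^{-1}(T_xX)^\vee = (\Lie\,\Aut(x))^\vee$, and symmetrically on the $Y$ side. The compatibility $f^*\omega_Y \simeq \omega_X$ means the square relating $f_*\colon T_xX \to T_{f(x)}Y$ to these dualities commutes, so the map on $H^0$ is identified with the dual (transpose) of the map on $H^{-1}$ — which we already know is an isomorphism. Hence $H^0(T_xX) \to H^0(T_{f(x)}Y)$ is also an isomorphism.

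Therefore the map of tangent complexes is a quasi-isomorphism in both degrees, so $f$ is \'etale at $x$. I would phrase the argument as: (1) recall/establish that for smooth stacks, \'etale $\Leftrightarrow$ quasi-iso on tangent complexes, citing the relevant fact from the appendix on analytic stacks; (2) compute $H^{-1}$ using the hypothesis on automorphism groups; (3) use nondegeneracy of $\omega_X$, $\omega_Y$ together with $f^*\omega_Y \simeq \omega_X$ to deduce the statement on $H^0$ by duality; (4) conclude.

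The main obstacle I anticipate is bookkeeping with the shifted-symplectic self-duality: one must be careful that the isomorphism $T_xX \simeq (T_xX)^\vee[-1]$ really does interchange $H^0$ and $H^{-1}$ as claimed (degree shifts and sign/antisymmetry conventions), and that the naturality square built from $f^*\omega_Y \simeq \omega_X$ genuinely identifies the $H^0$-map with the transpose of the $H^{-1}$-map rather than something off by a twist. A secondary subtlety is making sure ``$\Aut(x)^0$ is an isomorphism'' suffices — but since we only need the statement on Lie algebras, passing to neutral components is harmless, and the component groups don't affect \'etaleness of the map of stacks at the point (they would only matter for, say, being an isomorphism rather than \'etale). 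Once the linear-algebra dictionary is set up correctly, the proof is short.
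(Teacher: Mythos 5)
Your proof is correct and matches the paper's argument: both reduce \'etaleness to a quasi-isomorphism of tangent complexes in degrees $-1,0$, get $H^{-1}$ from the hypothesis on $\Aut(x)^0$, and get $H^0$ from the nondegenerate pairing between $H^{-1}$ and $H^0$ supplied by the weak 1-shifted symplectic structure. The paper's version (Proposition~\ref{symplectomorphismetale}) is simply a terser rendering of the same three steps.
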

\begin{proof}
We need to show that $df_x: T_x X \rightarrow T_{f(x)} Y$ is a quasi-isomorphism. The tangent complex is concentrated in degrees $-1,0$ since the stacks are smooth and $1$-truncated. For degree $-1$, we have $H^{-1}(df_x)=d(f_x)$, so it is an isomorphism. For degree 0, the map $H^0(df_x)$ is also an isomorphism since the weak $1$-shifted symplectic structure pairs $H^{-1}$ and $H^0$.
\end{proof}

\begin{eg}
Fix $\kappa $ an invariant non-degenerate bilinear form on $\g$.  For $\calP \in \Bun_G(E)$, we have a natural identification $T_\calP \Bun_G(E)[-1]  \simeq  R\Gamma(E, \g_\calP)$, and $\Bun_G(E)$ (hence $G_E$) has a natural weak 1-shifted symplectic structure given by the Serre duality pairing:
$$\xymatrix{
R\Gamma(E, \g_{\calP_{g(z)}}) \times R\Gamma(E, \g_{\calP_{g(z)}}) \ar[r]_-{\kappa} &
\tau^{\geq 1}R\Gamma(E, \calO_E)}$$
Similarly, $G/G \simeq \Loc_G(S^1)$ has a natural weak 1-shifted symplectic structure given by Poincar\'e duality. In general, \cite{PTVV} shows that $\Bun_G(X)$ has a $2-n$ shifted symplectic structure for $X$ a $n$-dimensional Calabi-Yau manifold and $\Loc_G(M)$ has a $2-n$ shifted symplectic structure for $M$ a $n$-dimensional oriented smooth manifold.
\end{eg}

The uniformization $p: L_{hol}G /' L_{hol} G \to \Bun_G(E)$ can be thought of as a non-linear \v{C}ech resolution associated to the cover $\C^* \to E$, in the sense that, after linearization:
$$dp_{g(z)}: \xymatrix{      T_{g(z)} L_{hol}G/'L_{hol}G[-1] \ar@{=}[d]  \ar[r]^-{\sim}  & T_{\calP_{g(z)}}\Bun_G(E) [-1]       \ar@{=}[d] \\
\{L_{hol}\g \xrightarrow[\phi_{g(z)}]{}  L_{hol} \g \} \ar[r]^-{\sim} & R\Gamma(E, \g_{\calP_{g(z)}}) }$$
the tangent map in the first row can be identified with the \v{C}ech resolution in the second row above, where $\phi_{g(z)}(X(z))=Ad_{ g(z)^{-1} }X(qz)  - X(z)$, and also complexes are (cohomologically) concentrated in degree $0,1$.

 There is a natural pairing:
\[
\kappa:\xymatrix{ \{L_{hol}\g \to  L_{hol} \g \} \times  \{L_{hol}\g \to  L_{hol} \g \} 
\ar[r]  & \tau^{\geq 1}\{L_{hol}\C \to  L_{hol} \C \} \simeq \C }\]
\[
\xymatrix{
(X^\bullet(z), Y^\bullet(z))  \ar@{|->}[r] & \oint \kappa(X^\bullet(z),Y^\bullet(z) ) \frac{dz}{z}
}\]
It follows from definition that this pairing resolve the Serre duality pairing, i.e:
\begin{prop}
\label{pairing}
The diagram naturally commute:
\[
\xymatrix{ \{L_{hol}\g \to  L_{hol} \g \} \times  \{L_{hol}\g \to  L_{hol} \g \}
\ar[r]_-{\kappa} \ar[d]^{\sim} & \{L_{hol}\C \to  L_{hol} \C \} \ar[d]^{\sim}  \\
R\Gamma(E, \g_{\calP_{g(z)}}) \times R\Gamma(E, \g_{\calP_{g(z)}}) \ar[r]_-{\kappa} &
R\Gamma(E, \calO_E)
}\]
\end{prop}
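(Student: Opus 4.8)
The plan is to realize both pairings through the $\ZZ$-Galois covering $\pi\colon \CC^* \to E = \CC^*/q^\ZZ$, so that the commutative square becomes the assertion that this realization is monoidal. First I would record the chain-level model. Since any holomorphic $G$-bundle on $\CC^*$ is trivializable and $\CC^*$ is Stein, $R\Gamma(\CC^*, \pi^*\g_{\calP_{g(z)}})$ is concentrated in degree $0$ and equals $L_{\hol}\g$; the Cartan--Leray spectral sequence for the covering $\pi$ then collapses to the identification $R\Gamma(E,\g_{\calP_{g(z)}}) \simeq R\Gamma(\ZZ, L_{\hol}\g)$, and the right-hand side is computed by the two-term complex $[L_{\hol}\g \xrightarrow{\gamma - 1} L_{\hol}\g]$, where the generator $\gamma$ of $\ZZ$ acts by $X(z) \mapsto \Ad_{g(z)^{-1}}X(qz)$, i.e.\ $\gamma - 1 = \phi_{g(z)}$. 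This is precisely the chain-level isomorphism $dp_{g(z)}$ described just above the proposition; in the same way $R\Gamma(E,\calO_E) \simeq [L_{\hol}\CC \xrightarrow{\psi} L_{\hol}\CC]$.

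Next I would argue compatibility with products. The Serre duality pairing is, by definition, the cup product $R\Gamma(E,\g_{\calP})^{\otimes 2} \to R\Gamma(E, \g_\calP^{\otimes 2})$ followed by the map induced by $\kappa\colon \g_\calP \otimes \g_\calP \to \calO_E$. Pullback along $\pi$ is symmetric monoidal and carries $\kappa$ on $E$ to $\kappa$ on $\CC^*$ --- here the $\Ad$-invariance of $\kappa$ is exactly what makes $\kappa$ a morphism of $q^\ZZ$-equivariant sheaves, the two twists $\Ad_{g(z)}^{\pm}$ cancelling --- and the group-cohomology functor $R\Gamma(\ZZ,-)$ carries its standard cup product, computed on the two-term (bar/minimal) resolution of $\ZZ$ by the usual diagonal approximation. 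Spelling this out yields an explicit formula for the induced chain pairing $[L_{\hol}\g \to L_{\hol}\g]^{\otimes 2} \to [L_{\hol}\CC \to L_{\hol}\CC]$, and I would then observe that this formula is exactly the definition of $\kappa(X^\bullet(z), Y^\bullet(z))$. Hence the square commutes on the nose, which is the claim that the chain pairing resolves Serre duality.

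The main obstacle is bookkeeping: one must pin down a \emph{single} chain-level model in which both vertical maps of the diagram and both pairings are manifestly instances of the same cup product, and then track the $\Ad$-twist and the shift $z \mapsto qz$ through the diagonal approximation, checking in particular that the resulting formula agrees, signs included, with the stated definition of $\kappa(X^\bullet,Y^\bullet)$. I would also flag that the proposition is stated with the \emph{untruncated} $R\Gamma(E,\calO_E)$ and its two-term model; the truncation $\tau^{\geq 1}$ entering the weak $1$-shifted symplectic form of $\Bun_G(E)$ is applied afterwards and is compatible with everything in sight, so it plays no role here. An alternative that sidesteps the Cartan--Leray bookkeeping is the $\bar\partial$-model: pull the Dolbeault complex of $\g_{\calP}$ back to $\CC^*$, use a $q^\ZZ$-invariant partition of unity to build a chain map from $[L_{\hol}\g \to L_{\hol}\g]$, and compare $\int_E \kappa(\alpha\wedge\beta)$ with the chain pairing --- this is more hands-on but forces a choice of cut-off functions, which I would rather avoid.
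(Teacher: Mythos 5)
Your plan is essentially the right one, and it is in fact more careful than the paper itself: the paper gives no proof of this proposition (it declares ``it follows from definition'') and, crucially, it never actually writes down the formula for the chain-level pairing $\kappa(X^\bullet(z),Y^\bullet(z))$ — the display just above the proposition only names the map. Your strategy of realizing both rows through the $\ZZ$-Galois cover $\pi\colon\CC^*\to E$, identifying $R\Gamma(E,\g_{\calP_{g(z)}})$ with group cohomology $R\Gamma(\ZZ,L_{\hol}\g)$ via the two-term complex $[L_{\hol}\g\xrightarrow{\gamma-1}L_{\hol}\g]$ with $\gamma-1=\phi_{g(z)}$, and then matching the Serre duality pairing with the group-cohomology cup product post-composed with $\kappa$ (using $\Ad$-invariance so that $\kappa$ is a map of $q^\ZZ$-equivariant sheaves) is exactly how to make the paper's assertion precise. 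Two remarks. First, your observation about the missing $\tau^{\geq 1}$ on the top-right corner is correct: the display preceding the proposition does truncate but the proposition's diagram does not; this is a notational slip in the paper, and your resolution (the degree-one part is where the nondegenerate pairing lives, and truncation is applied when building the shifted symplectic form) is the intended reading. Second, because the paper leaves $\kappa(X^\bullet,Y^\bullet)$ undefined, the last step of your argument — ``observe that this formula is exactly the definition'' — cannot literally be checked against the text; what you should instead do is take your cup-product formula as the definition of the chain-level pairing, which is evidently what the authors intended. With that reading, your proof is complete and supplies content the paper omits. Your proposed $\bar\partial$/Dolbeault alternative would also work and is arguably closer to the proof of the analogous Lie-algebra statement (Prop.~\ref{pssymplecto} uses the explicit formula $\oint\kappa(-,-)\tfrac{dz}{z}$), but as you say it forces a choice of bump functions; the group-cohomology route is cleaner.
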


If we view the tangent complex $T_{g(z)}G_s^0/'G_s$ as a subcomplex of $T_{g(z)}L_{hol}G/'L_{hol}G$, then $G_s^0/'G_s$ has an induced 1-shifted 2-form $\omega$.

\begin{prop} 
\label{pssymplecto}
The $1$-shifted $2$-form $\omega$ on $G^0_s/'G_s$ is non-degenerate. And the map $p_s : G_s^0/'G_s \rightarrow G_E$ is a 1-shifted symplectomorphism.
\end{prop}
\begin{proof}
The second statement follows from Proposition~\ref{pairing}. For the first statement, we first prove that the pairing $\oint \kappa(-,-) \frac{dz}{z}:  \xymatrix{\g_s \times \g_s \ar[r]  & \C }$ is non-degenerate. This is because $\g_s=\t \oplus \bigoplus_{\alpha \in \Phi_s} \g_\alpha $, and the pairing pairs $\t$ with $\t$, pairs $\g_\alpha $ with $\g_{-\alpha} $. Now the non-degeneracy of $\omega$ follows from the following tautological Lemma:

\begin{lem}
Let $<-,->: \xymatrix{ V^0 \times V^1 \ar[r] & \C }$ a non-degenerate pairing between finite dimensional vector spaces, and let $\phi: V^0 \to V^1$, such that ${< \Ker(\phi), \Image(\phi)> = 0}$, then the induced pairing 
$$\xymatrix{\Ker(\phi) \times (V^1/\Image(\phi))  \ar[r] &  \C} $$ 
is also non-degenerate.
\end{lem}
\noindent
To complete the proof of Proposition, take $V^0=\g_s, V^1=\g_s$, and $\phi=\phi_{g(z)}$.
\end{proof}

\subsubsection{Etale charts} Let $T_s^{et}:=\{t \in T: G_t \subset G_s\}$, then $T_s^{et}$ is a $W_s$-invariant open subset of $T$, 
also note that $T_s^{et}$ can be computed in terms of root datum and the elliptic parameter $q$ thanks to Corollary~\ref{automorphism}.
Denote $G_s^{0,et}:={\chi'_s} ^{-1}(T_s^{et}//W_s)$.

\begin{thm}
\label{etalesurjective}
The map $p^{et}_s: G_s^{0,et}/'G_s \rightarrow G_E$ is \'etale. And $p^{et,*}_s(\N_{T^*G_E})=\N_{T^*(G_s^{0,et}/'G_s)}.$
\end{thm}                                 
\begin{proof}
We first prove that $p^{et}_s$ is \'etale for $t \in T_s^{et}$. By Proposition~\ref{symplectomorphismetale} and \ref{pssymplecto}, we need to show that $(p_s)_t: Aut(t) \rightarrow Aut(\calP_t)$ is an isomorphism (on the neutral component). This is true because $(p_s)_t$ is identified as $Aut(t)= C_{G_s}(t) = G_s \cap C_{L_{hol}G}(t)= G_s \cap G_t = G_t \xrightarrow{\sim} Aut(\calP_t)$. 
Now the first assertion follows since $T_s^{et}$ is abundant in $|G_s^{0,et}/'G_s|$ and the  \'etale locus is open. For the second statement, for $g(z) \in G_s$, the \v{C}ech complex for $\g^*_{\mathcal{P}_{g(z)}}$ and the first statement give quasi-isomorphisms of complexes in degree $0,1$: $T_{g(z)}^*(G_E) \simeq \{L_{hol}\g^* \to L_{hol} \g^*\} \simeq \{L_{hol}\g \to L_{hol} \g\}^* \simeq \{\g_s \to \g_s\}^* \simeq \{\g^*_s \to \g^*_s\} \simeq T_{g(z)}^*(G^0_s/'G_s)$. Under this identification, $X(z) \in H^0(\g_s^* \to \g_s^*) \simeq_{\kappa} H^0(\g_s \to \g_s)$ is in $\N_{T^*G_E}$ if $X(z) $ is nilpotent in $\g$ for all $z \in \C^*$, and $X(z) $ is in $\N_{T^*(G_s^{0,et}/'G_s)}$ if it is nilpotent as an element in the Lie algebra $\g_s$. Now we see these two notions are equivalent by the explicit formula of $\g_s$ in Proposition~\ref{calculate gs}.  
\end{proof}

View $X_*(T)$ as a subgroup of $LT$, it acts freely on the constant loops $T \subset LT$ via twisted conjugation. We have $T_E \simeq T/'X_*(T) \times BT$. The group $W_\aff=X_*(T) \rtimes W$ acts on $T$, and let $T^{q-reg}$ be the open dense locus where the action of $W_\aff$ is free. Let $G_s^{0,q-reg}:=\chi_s^{-1}(T^{q-reg}//'W_s)$.
Using the identification $T_E^{reg}/W \simeq (T^{q-reg}/'X_*(T) \times BT)/W \simeq  (T^{q-reg} \times BT) /'  W_\aff$, we have a commutative diagram:
\begin{equation}
\label{genericcommutativediagram}
\xymatrix{
(T^{q-reg} \times BT) /'  W_s     \ar[d] \ar[r]^-{\sim}            &     G_s^{0,q-reg}/'G_s  \ar[d]^{p_s}\\
(T^{q-reg} \times BT) /'  W_\aff  \ar[r]^-{\sim}                                     &      G_E^{reg}
}
\end{equation}
Recall the semi-simplification map $\chi_E: G_E \to \fre_E$ as in Appendix~\ref{ssbundles}.
\begin{prop}
\label{pscartesian}
The following commutative diagram is cartesian:
$$\xymatrix{ G_s^{0,et}/'G_s \ar[r]^-{\chi'_s} \ar[d]^{p_s} \ar@{}[rd]|-{\square} &  T^{et}_s //' W_s \ar[d]^{} \\
              G_E        \ar[r]^-{\chi_E}   &  \fre_E \simeq  T//' W_\aff 
}$$ 
\end{prop}

\begin{proof}
 Suffices to show for each small open $U \subset T_s^{et}//'W_s$, the diagram obtained by restricting to $U$ is cartesian :
$$\xymatrix{ {\chi'_s}^{-1}(U) \ar[r]^{\chi'_s} \ar[d]^{p_U} \ar@{}[rd]|-{\square} &  U \ar[d]^{q} \\
               G_E        \ar[r]^{\chi_E}   &   T//'W_\aff
}$$ Assume $U$ is small so that $q$ is an open embedding. Let $\widetilde{U}$ be the preimage of $U$ in $T^{et}_s$.  Now by (\ref{genericcommutativediagram}), $p_U|_{{\chi'_s}^{-1}(U \cap (T^{q-reg}/'W_s))}$ is identified with the composition 
$$((\widetilde{U} \cap T^{q-reg}) \times BT)/'W_s \to (T^{q-reg} \times BT)/'W_\aff  \to G_E$$ 
which is an open embedding by the choice of $U$.
Therefore $p_U$ is generically open embedding, and it is also \'etale, so by Lemma~\ref{openembedding}, the map $p_U$ is an open embedding. Now we need to check the image of $p_U$ equal $\chi_E^{-1}( q(U))$. This is because the image contains all the semi-simple bundles in $\chi_E^{-1}( q(U))$ by construction and hence consist all $\chi_E^{-1}( q(U))$ by Proposition~\ref{semisimpleabundant}.\end{proof}

\subsection{Gluing of charts}
\label{glueing}
In this section, we will glue the charts defined in Section~\ref{etalecharts}, i.e we will calculate the fiber products of the charts. The combinatorics of higher descent data is naturally organized in diagrams introduced in Section~\ref{descentcategory}. The main result of this section is Theorem~\ref{colimitdiagram}.

For $\textbf{s}= (s_1,s_2,...,s_k) \in T^k$, let $G_{\textbf{s}}:=\bigcap_{i=1}^k G_{s_i}$, $W_\textbf{s}:=N_{G_{\textbf{s}}}(T)/T$, define $\chi_\textbf{s}, T_\textbf{s}^{et}, G_\textbf{s}^{et}$ analogously. All the above statements for $G_s$ still holds for $G_\textbf{s}$, and moreover $W_\textbf{s}=\bigcap_{i=1}^k W_{s_i}, T_\textbf{s}^{et}=\bigcap_{i=1}^k T_{s_i}^{et}$
, $T^{q-reg} \subset T_\textbf{s}^{et}$ for all $\textbf{s}$ by the connectedness of $G$.

\begin{prop}
For any $w \in N_G(T)\cdot X_*(T) \subset LG$, the twisted conjugation $Ad'_{w}: G_\bs^0 \xrightarrow{\sim} G_{w(\bs)}^0$ intertwines the action $Ad_{{w}} :G_\bs \xrightarrow{\sim} G_{w(\bs)}$. Hence we have an isomorphism of stacks $Ad'_{{w}}: G_\bs^0/'G_\bs \xrightarrow{\sim} G_{w(\bs)}^0/'G_{w(\bs)}$.
\end{prop}

\begin{proof}
 $G_{{w}(s)}=C_{LG}({w}(s))=Ad_{{w}}C_{LG}(s) $, so we have a isomorphism of algebraic groups: $Ad_{{w}}: G_s \rightarrow G_{w(s)}$.

  Write $w=u\lambda,$ for $u \in N_G(T),$ and $\lambda \in X_*(T)$.   Using the fact that $\lambda(q) \in T \subset G_s$, we have  
  $$Ad_{{w}}'(G_s)={u}\lambda(qz)G_s\lambda(z)^{-1}{u}^{-1}={u}\lambda(z)\lambda(q)G_s \lambda(z)^{-1}{u}^{-1}=Ad_{{w}}(G_s)=G_{w(s)}.$$ 
Since $Ad_{{w}}'$ stablize $T$, we have $Ad_{{w}}': G_s^0 \rightarrow G_{w(s)}^0$ isomorphism of algebraic varieties. 
The pair of isomorphisms of algebraic varieties and algebraic groups $(Ad_{{w}}', Ad_{{w}}):(G_s^0, G_s) \rightarrow (G_{w(s)}^0, G_{w(s)})$ intertwine the twisted conjugation action on both sides. Hence we have an induced isomorphism of quotient stacks, still denoted by $Ad_{{w}}': G_s^0/G_s \rightarrow G_{w(s)}^0/G_{w(s)}$. It's also easy to see that the above map takes \'etale locus to \'etale locus, so we have $Ad_{{w}}':G_s^{0,et}/G_s \rightarrow G_{w(s)}^{0,et}/G_{w(s)}$.
\end{proof}

\begin{defn}
\label{descentdiagramstack}

Let $\dot{W}_\aff \subset N_G(T) \cdot X_*(T)$ be a subgroup such that the map $\dot{W}_\aff \to  (N_G(T) \cdot X_*(T)) /T =W_\aff$ is surjective.
Let $S \subset T$ be a $W_\aff$ (or equivalently $\dot{W}_\aff$) invariant subset, and let $\{V_s,s\in S\}$ be a collection of open subsets of $T$ satisfying $V_{w(s)}=w(V_s)$, for all $w \in W_\aff$. Let $V_\bs := \bigcap_{s \in \bs} V_s$, $U_{\bs}:={\chi'_{\bs}}^{-1}(V_\bs//W_\bs)$.

 Then we have a functor 
$$ \xymatrix{ \bU: \int^{\Delta^{op}} S^\bullet_{\dot{W}_\aff}  \ar[r] & \Stk_\oo  }$$
defined similarly to Construction~\ref{egxreg}  by:
\begin{enumerate}
\item $\bU(\bs):= U_\bs/'G_\bs$;
\item $\bU(w ):= Ad'_{{w}}: \xymatrix{U_\bs/'G_\bs \ar[r]^-{\sim} & V_{w(\bs)}/'G_{w(\bs)}};$
\item $\bU(w'w^{-1}):= \eta_{{w'} {w^{-1}}} \circ Ad'_{{w}} :
 \xymatrix{Ad'_{{w}}  \ar@{=>}[r] &  Ad'_{{w'}} }$;
 \item $\bU(\delta: \bs \to \bs' ):=  i: \xymatrix {U_\bs/'G_\bs \ar[r] &  U_{\bs'}/'G_{\bs'}    }$.
\end{enumerate}

The augmentation morphisms $p_s$ and 2-morphisms $\varphi_{{w}}$ defined below 
extends the functor $\bU$ to  \\
$\xymatrix{ \bU_{+}:\int^{\Delta^{op,\triangleright}} S^\bullet_{\dot{W}_\aff} \ar[r] & \Stk_{\oo}},$ by sending the final object to $G_E$:

There is commutative diagram:
\[
\xymatrix{
\mathbb{C} \times G^0_s \times G \ar[r]^{{\varphi}_{w}}_{\simeq}  \ar[d]
&  \mathbb{C} \times G^0_{w(s)} \times G \ar[d] \\
\mathbb{C} \times G^0_s    \ar[r]^{Id \times Ad'_{\dot{w}}}_{\simeq}  &  \mathbb{C} \times G^0_{w(s)}
}
\]
where  $\varphi_{{w}}(z,h,g):= (z,Ad'_{{w}}(h), {w}(z)g)$. The diagram is $q^\mathbb{Z}$-equivariant and hence induces:
\[
\xymatrix{
\mathscr{P}_s \ar[r]^{{\varphi}_{{w}}}_{\simeq}  \ar[d] & \mathscr{P}_{w(s)} \ar[d] \\
E \times G^0_s    \ar[r]^{Id \times Ad^q_{{w}}}_{\simeq}  &  E \times G^0_{w(s)}
}
\]

Hence and induces $\varphi_{{w}}: p_s \Rightarrow p_{w(s)} \circ Ad'_{{w}}: G^0_s/G_s \rightarrow G_E$  an isomorphism between the morphisms of stacks.
\end{defn}	

We have the main theorem of this section:

\begin{thm} 
\label{colimitdiagram}
Assume $V_s \subset T_s^{et}$ and $\bigcup_{s \in S} V_s =T$, then
\begin{enumerate}
	\item  the natural map in $\Stk$  is an isomorphism :
 $$\xymatrix{\colim_{\int^{\Delta^{op}} S^\bullet_{\dot{W}_\aff}} \bU \ar[r]^-{\sim}  & G_E}$$  
 \item the natural map in $\Cat_\oo$ is an equivalence:
  $$\xymatrix{\lim_{({\int^{\Delta^{op}} S^\bullet_{\dot{W}_\aff}})^{op}} Sh_{\N}(U_\bs/'G_\bs) \ar@{<-}[r]^-{\sim}  & Sh_\N(G_E)  }$$  
 \end{enumerate}
\end{thm}
\begin{proof} 
 By Construction~\ref{egxreg}, we have 
 $\xymatrix{ \bV_{+}:{\int^{\Delta^{op,\triangleright}} S^\bullet_{\dot{W}_\aff}} \ar[r] & \mathscr{S} }$, 
which is a colimit diagram by Proposition~\ref{xcolimit}. 
The character polynomial maps $\chi_\bs$ and $\chi_E$ give a natural transformation $\chi: \bU_{+}(\C) \Rightarrow \bV_{+}$, which is cartesian by argument similar to Proposition~\ref{pscartesian} (note that $T//'W_\aff = T//'\dot{W}_\aff$). Hence the functor $\bU_{+}(\C)$ is a colimit diagram since colimit in $\mathscr{S}$ is stable under base change. Hence we conclude by Theorem~\ref{etalesurjective} and Proposition~\ref{descentss}.
\end{proof}

\subsubsection{A Lie theoretic choice of charts for simply-connected groups}

In this section, we will simplify the previous general discussions to concrete Lie theoretic data involving alcove geometry. We assume $G$ is simply-connected throughout this section.  \\

Choose $\tau \in \mathcal{H},$ such that $q=\exp(2\pi i \tau)$. 
The identification $\Z \simeq \Z \tau$ gives 
$\t_\R= X_*(T) \otimes \R  \simeq X_*(T) \otimes \R \tau$. And hence gives a natural wall stratification on $X_*(T) \otimes \R \tau $. 
Under the identification $\C= \R \times  \R \tau$, we have:
$$(X_*(T) \otimes \R/\Z) \times (X_*(T) \otimes \R \tau)= X_*(T) \otimes \C/\Z \xrightarrow{\Exp:=\exp(2\pi i -)} X_*(T) \otimes \C^*=T. $$ 
Note that the restriction of  exponential map: $X_*(T) \otimes \R \tau  \rightarrow T$ is an embedding. The groups defined in Section~\ref{Lietheoretic} and in Section~\ref{automorphismgroups} coincide under this embedding:
\begin{prop}
\label{maximalgs}
\begin{enumerate}
\item For $a \in \t_\R$, we have $G_a = G_{\Exp(0, a \tau)}$.
\item For $a \in \t_\R$, and $\theta \in X_*(T) \otimes \R/\Z,$ we have
 $G_{\Exp(\theta,a \tau)} \subset G_{\Exp(0,a \tau)}$.
\end{enumerate}
\end{prop}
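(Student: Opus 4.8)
The key computational input is the explicit description of $G_s$ from Proposition~\ref{gs} and the subsequent propositions, namely that $\g_s = \t \oplus \bigoplus_{\alpha \in \Phi_s} \g_\alpha$ where $\Phi_s = \{\alpha \in \Phi_\aff \mid \alpha(s) = 1\}$ (affine roots regarded as maps $T \to \C^*$ via $\alpha_0 + n \mapsto (s \mapsto \alpha_0(s)q^n)$), together with Corollary~\ref{automorphism} controlling the component group by $W_s = C_{W_\aff}(s)$. So the plan is to translate the condition ``$\alpha$ vanishes on the relevant element of $T$'' into the condition ``the affine root $\alpha$ vanishes on the point $a \in \t_\R$'' used in Section~\ref{Lietheoretic}, and check this translation is an equality (for (1)) or an inclusion (for (2)) at the level of both Lie algebras and component groups.

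\textbf{Step 1 (Lie algebra level, part (1)).} Fix $a \in \t_\R$ and set $s := \Exp(0, a\tau) = \exp(2\pi i (a\tau)) \in T$. For an affine root $\alpha = \alpha_0 + n \in \Phi_\aff$, compute $\alpha(s)$ under the stated identification: $\alpha(s) = \alpha_0(s) q^n = \exp(2\pi i \alpha_0(a\tau)) \cdot \exp(2\pi i n\tau) = \exp(2\pi i \tau (\alpha_0(a) + n))$. Since $\tau \in \mathcal H$ lies in the upper half plane, $\exp(2\pi i \tau m) = 1$ for $m \in \R$ if and only if $m = 0$; hence $\alpha(s) = 1 \iff \alpha_0(a) + n = 0 \iff \alpha(a) = 0$ (the latter being exactly the defining condition for $\alpha \in \Phi_{J}$ where $J$ is the facet of $a$, or more precisely the condition $\alpha(a) = 0$ used in Notation~\ref{lienotation} and the definition of $\g_J$ in Section~\ref{Lietheoretic}). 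Therefore $\Phi_s = \Phi_{\aff, a} := \{\alpha \in \Phi_\aff \mid \alpha(a) = 0\}$, and comparing the two formulas for the Lie algebras gives $\g_{a} = \g_s$ as subalgebras of $L\g$. Note here that the $\t$ summand matches: in $\g_a$ the torus part is $\t$, and in $\g_s$ it is also $\t$, corresponding to the affine roots with $\alpha_0 = 0$, i.e.\ $n = 0$ after the identification, which is consistent.

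\textbf{Step 2 (component group level, part (1)).} By Corollary~\ref{automorphism}, $G_a$ and $G_s$ are both generated by $T$, the root subgroups $\exp \g_\alpha$ for the respective root sets (now shown equal), and lifts $\dot w$ for $w$ in the respective stabilizers — $W_a = \Stab_{W_\aff}(a)$ on one side (via Theorem~\ref{reflectiongroup}(1) and Proposition~\ref{connected}, $W_a = W_{\frH_\aff, a}$ is generated by the reflections fixing $a$) and $W_s = C_{W_\aff}(s)$ on the other. So I must check $W_a = W_s$: an element $w = u\lambda \in W_\aff$ fixes $a$ iff it fixes $s = \Exp(0,a\tau)$, which follows because $\Exp$ restricted to $X_*(T)\otimes \R\tau$ is an embedding (as stated just before the proposition) and is $W_\aff$-equivariant — here one uses that $w$ acts on $\t_\R$ and the induced action on $T$ via $\Exp$ intertwines, with the translation part $\lambda$ acting by $\lambda(q)$-multiplication which is $\Exp$ of the lattice translation. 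This gives $G_a = G_s$, proving (1). I expect the main subtlety to be bookkeeping the $\Z \simeq \Z\tau$ identification and checking that the $W_\aff$-action on $\t_\R$ genuinely corresponds, under $\Exp(0, -\tau)$, to the twisted-conjugation action relevant to $C_{W_\aff}(s)$ — in particular that the translation lattice $X_*(T)$ acts on $s$ the way it acts on $a$.

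\textbf{Step 3 (part (2)).} For $\theta \in X_*(T)\otimes \R/\Z$ and $t := \Exp(\theta, a\tau)$, I claim $\Phi_t \subset \Phi_s$ where $s = \Exp(0,a\tau)$, which by the generation statements of Corollary~\ref{automorphism} (applied with $W_t = C_{W_\aff}(t) \subset C_{W_\aff}(s) = W_s$, the inclusion following since any affine Weyl element fixing $t$ fixes its ``$q$-part'' $a$, hence fixes $s$) yields $G_t \subset G_s$. For the root sets: if $\alpha = \alpha_0 + n \in \Phi_t$, then $\alpha(t) = \alpha_0(\Exp\theta)\exp(2\pi i\tau(\alpha_0(a)+n)) = 1$; taking absolute values and using $|\alpha_0(\Exp\theta)| = 1$ (since $\theta$ is ``real'', i.e.\ $\Exp\theta \in$ the compact torus) forces $|\exp(2\pi i\tau(\alpha_0(a)+n))| = 1$, hence $\alpha_0(a) + n = 0$ as $\tau$ has positive imaginary part, hence $\alpha(a) = 0$, i.e.\ $\alpha \in \Phi_{\aff,a} = \Phi_s$. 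This is precisely the ``$q$-part lies in the right facet'' heuristic from the introduction. The inclusion is generally strict because the additional phase condition $\alpha_0(\Exp\theta) = 1$ cuts down the root set further; this is exactly why $\g^{se}_J$-type refinements are needed later. I do not expect any step to be a serious obstacle — the whole proposition is an exercise in unwinding the exponential and the identification of affine roots with characters of $T$; the only place demanding care is the equivariance/compatibility of $\Exp$ with the $W_\aff$-actions, handled in Step 2.
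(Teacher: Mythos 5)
Your proof is correct and takes essentially the same route as the paper: you unwind $\Exp$, use that $\tau$ has nonzero imaginary part to convert the multiplicative vanishing condition $\alpha(s)=1$ into the additive one $\alpha(a)=0$, and invoke $W_\aff$-equivariance of $\Exp$ to match the stabilizers. The paper packages this slightly more uniformly by proving once that $\Phi_{\Exp(\theta,a\tau)} = \Phi_\theta \cap \Phi_a$ and $W_{\Exp(\theta,a\tau)} = W_\theta \cap W_a$ and then reading off both (1) and (2) from the case $\theta=0$ (your Steps 1 and 3 are exactly the $\theta=0$ and general-$\theta$ instances of this identity), but that is an organizational difference rather than a different argument.
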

\begin{proof}
There is a commutative diagram:
$$
\xymatrixcolsep{5pc}
\xymatrix{
(X_*(T) \otimes \R/\Z) \times (X_*(T) \otimes \R \tau) \ar[r]^-{\Exp}_-{W_\aff \text{-equivariant}} \ar[d]^{\alpha} & X_*(T) \otimes \C^* \ar[d]^{\alpha} \\
\R/\Z  \times \R \tau  \ar[r]^-{\Exp}_{\sim} &  \C^*  \\
 \{0\} \times \Z\tau \ar@{^{(}->}[u]  \ar[r]^-{\Exp}_-{\sim}  &   q^\Z \ar@{^{(}->}[u]
 }
$$
Denote $\Phi_\theta:=\{ \alpha=\alpha_0 -n \in \Phi_\aff \,|\, \alpha_0(\theta)=0 \}$ and $W_\theta:=C_{W_\aff}(\theta)$. For $s:=\Exp(\theta, a\tau)$,  then $\Phi_s= \Phi_\theta \cap \Phi_a$ as subset of $\Phi_\aff$ and $W_s =W_\theta \cap W_a$ as group of $W_\aff$. Hence $(1),(2)$ follow since for $\theta=0$, $\Phi_{\theta}= \Phi_\aff$ and $W_\theta=W_\aff$, c.f Proposition~\ref{connected}.
\end{proof}

Now we assume that $G$ is simply-connected.

\begin{cor}
For $s=\Exp(0,a\tau)$, the group $G_s$ is connected. 
\end{cor}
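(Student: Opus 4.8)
The plan is to deduce the corollary directly from Proposition~\ref{maximalgs} together with the results on automorphism groups proved earlier in this subsection. The key point is that for $s = \Exp(0, a\tau)$, Proposition~\ref{maximalgs}(1) identifies $G_s$ with $G_a$, where on the right-hand side $G_a$ is the reductive subgroup of $L_{hol}G$ attached to the facet containing $a$ via the Lie-theoretic construction of Section~\ref{Lietheoretic} (using the collection of affine hyperplanes $\frH_\aff$ and the identification $\Z \simeq \Z\tau$). So it suffices to show that $G_a$ is connected.

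First I would unwind the identification: under $\Exp(0,-\tau)\colon \t_\R \to T$, the set $\Phi_s = \{\alpha \in \Phi_\aff \mid \alpha(s) = 1\}$ from Section~\ref{automorphismgroups} corresponds to $\Phi_a = \{\alpha \in \Phi_\aff \mid \alpha(a) = 0\}$ from the facet construction (this compatibility is implicit in the proof of Proposition~\ref{maximalgs}, where $\Phi_{\theta=0} = \Phi_\aff$ and $W_{\theta=0} = W_\aff$). Hence the stabilizer $W_s = C_{W_\aff}(s)$ becomes $W_a = C_{W_\aff}(a)$, the pointwise stabilizer of $a$ in the affine Weyl group.

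The main step is then to invoke Corollary~\ref{automorphism}, which gives $G_s = \langle T, \exp\g_\alpha, \dot w \mid \alpha \in \Phi_s, w \in W_s\rangle$, and to observe that the component-group contribution $W_s = W_a$ is already generated by reflections $r_H$ for hyperplanes $H \in \frH_\aff$ containing $a$, by Theorem~\ref{reflectiongroup}(1) (equivalently Proposition~\ref{connected}). Since $G$ is simply-connected, the affine hyperplane arrangement $\frH_\aff$ equals $W_\aff = W \ltimes X_*(T)$ (Theorem, Section~\ref{Lietheoretic}), so every such reflection $r_H$ with $H \ni a$ lies in $W_a$, and moreover the reflection $r_H$ for an affine root $\alpha$ vanishing on $a$ is realized inside the rank-one subgroup $\langle T, \exp\g_{\alpha}, \exp\g_{-\alpha}\rangle \subset G_s^0$. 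Therefore each lift $\dot w$ for $w \in W_s$ already lies in $G_s^0$, whence $G_s = G_s^0$ is connected. This is exactly the content of Proposition~\ref{connected} transported through Proposition~\ref{maximalgs}(1); I would phrase the proof as a one-line appeal to those two results.

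The only real subtlety — and the step I would be most careful about — is checking that the simple-connectivity hypothesis is used correctly: it is precisely what guarantees $W_{\frH_\aff} = W_\aff$ (so that the group generated by the affine reflections through $a$ is all of $W_a = C_{W_\aff}(a)$, with no failure coming from a disconnected center or a coarser lattice than the coroot lattice), and hence that Proposition~\ref{connected} applies to the affine facet $J$ containing $a$. Everything else is bookkeeping: matching the notation $\Phi_s \leftrightarrow \Phi_J$, $W_s \leftrightarrow W_J$ across the two constructions, which the paper has already set up.
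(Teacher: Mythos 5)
Your proof is correct and follows the same path the paper intends: the corollary is an immediate consequence of Proposition~\ref{maximalgs}(1) identifying $G_s$ with the Levi subgroup $G_J$ of $LG$ attached to the facet containing $a$, together with Proposition~\ref{connected} (itself a consequence of Theorem~\ref{reflectiongroup}(1), which uses simple-connectivity via $W_{\frH_\aff}=W_\aff=W\ltimes X_*(T)$) which says that group is connected. You have simply unwound the references that the paper leaves implicit; the content is the same.
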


\begin{rmk}
For general $s$,  the group $G_s$ may not be connected, a counter-example is given in \cite{BEG}. 
\end{rmk}

Denote by $T_J^{se}:= (X_*(T) \otimes \R/\Z) \times St_J \cdot \tau  \subset T$ and $G_J^{se}:= {\chi'_J}^{-1} (T_J^{se}//'W_J) \subset G_J$ be the set of elements with ``small eigenvalues''. 

\begin{prop}
\label{seinet}
The subset $T_J^{se}$ is contained in $T^{et}_J$. 
\end{prop}
\begin{proof}
Need to prove that for any $s \in T^{et}_J$, the group $G_s $ is contained in $G_J$. By Proposition~\ref{maximalgs}, we can assume $s= (0, a\tau)$, i.e we need to prove $G_a \subset G_J$ for $a \in St_J $, and this can be easily checked.
\end{proof}

\begin{prop} 
\label{colimtorus} 
There are isomorphisms in $\mathscr{S}$:
\begin{enumerate} 
	\item $ \xymatrix{\colim_{J \in \mathscr{F}_C^{op}} T^{se}_J//'W_J  \ar[r]^-{\sim}   & T//'W_{\aff}}$
	\item $ \xymatrix{\colim_{J \in \mathscr{F}_C^{op}} G^{se}_J/'G_J (\C) \ar[r]^-{\sim}   & G_E(\C)}$
\end{enumerate}
\end{prop}

\begin{proof}
(1) By Proposition~\ref{opencolimit}, we have $ \colim_{J \in \mathscr{F}_C} St_J\tau \times_{W_J} W_\aff \simeq \t_\R \tau ,$ as $W_\aff$-set. Multiply the $W_\aff$-set $X_*(T) \otimes \R/\Z$  on both sides yields  $\colim_{J \in \mathscr{F}_C} (X_*(T) \otimes \R/\Z) \times St_J \tau \times_{W_J} W_\aff \simeq (X_*(T) \otimes \R/\Z) \times \t_\R\tau $ compatible with the diagonal $W_\aff$ action. Now dividing $W_\aff$ on both sides gives $\colim_{J \in \mathscr{F}_C} T^{se}_J/W_J \simeq T/W_{\aff}$. 	For $s= (\theta, a \tau) \in T_J^{se}, w \in W_\aff$, such that $w(s)=s$, we have $w(a)=a$, and $a \in St_J$, hence $w \in W_J$. So the maps $T^{se}_J/W_J \to T/W_\aff$ are fully-faithful, then we get (1) by take $\pi_0$ of the previous equivalence. (2) Follows from (1) and Proposition~\ref{pscartesian}. 
\end{proof}	

\begin{thm}
\label{colimitdiagram2} \begin{enumerate}
	\item 
There is an isomorphism of stacks: 
$$
\xymatrix{
\colim_{J \in \mathscr{F}_C} G^{se}_J/'G_J \ar[r]^-\sim & G_E
}
$$
\item There is an equivalence of $\oo$-categories:
$$
\xymatrix{
	\lim_{J \in \mathscr{F}_C} Sh_{\N}(G^{se}_J/'G_J) & Sh_{\N}(G_E)  \ar[l]_-\sim
}
$$
\end{enumerate}
\end{thm}

\begin{proof}
	The functor $\mathscr{F}^{\triangleright}_C \to \Stk$ (via $J \mapsto G_J^{se}/'G_J$ and $* \mapsto G_E$) satisfies the assumption of Theorem~\ref{descent}: (i) $G^{se}_J/'G_J$ and $G_E$ are analytic stacks. (ii) The maps are \'etale by Proposition~\ref{seinet}. (iii) By Proposition~\ref{colimtorus} (2).
\end{proof}	

\begin{rmk} The locus of small eigenvalues $G_J^{se}$ depends on the choice of $\tau$. Nevertheless, as we will see later in Corollary~\ref{levigroup}, the category of  sheaves with nilpotent singular support $Sh_\N(G_J^{se}/'G_J)$ does not depends on $\tau$ and it is equivalent to $Sh_\N(G_J/G_J)$.
\end{rmk}

\subsubsection{A Lie theoretic choice of charts for general reductive groups}
\label{reductivecharts}
Now assume that $G$ is a connected reductive group. We write $G = (Z^0(G) \times \widetilde{G}_{\textup{der}})/F$, where $\widetilde{G}_{{\der}}$ is the simply-connected cover for the derived group of $G$, and $F$ is some central finite group. Denote by $\widetilde{T}_\der$ the corresponding torus in $\widetilde{G}_\der$, and $\widetilde{\t}_\der$ its Lie algebra. Let $S_0 \subset \widetilde{\t}_\der$ the set of vertices of the affine alcoves in $\widetilde{\t}_{\der,\R}$. Denote by $S:=(X_*(Z^0(G)) \times S_0)/F \subset \t_\R$, for any $s =[(c,s_0)] \in S$, the open subset $V_{\R,s}:= (\frz_\g \times St_{x_0})/F \subset \t_\R$ is independent of the choice of the representatives of $s$. Put $V_s = X_*(T) \otimes \R/\Z \times V_{\R,s}  \subset T$. Then the collection $\{V_s, s \in S \}$ satisfies the assumption of Definition~\ref{descentdiagramstack} and Theorem~\ref{colimitdiagram}.


\subsection{Complex gauge theory on $S^1$}

In this section, we study the stack $G/G \simeq \Loc_G(S^1)$ using 
the gauge uniformization on $S^1$. We will establish the results in previous sections in the present situation. It can be viewed as a nonabelian analog of the uniformization $\C \to \C^*=\C/\Z$. Many of the proofs are similar as before, we shall only highlight some differences in the present situation.  

Denote by $\usG$ the trivial $G$-bundle on $S^1$,  by $\mathcal{A}(\usG)$ the space of connections on $\usG$.  and by $\Conn_G(S^1)$ the moduli stack of smooth $G$-bundles on $S^1$ with connection. Since every $G$-bundle on $S^1$ is trivial, we have an isomorphism of groupoids $\Conn_G(S^1)(pt) = \mathcal{A}(\usG)/ \Aut(\usG)$. We have an identification $\Aut(\usG) \simeq C^{\infty}(S^1,G)=: L_{sm}G$. The trivial connection on $\usG$ gives $\mathcal{A}(\usG) \simeq \Omega^1(S^1, \mathfrak{g})$. Fix $z \in C^{\infty}(S^1,\C^*)$ a degree $1$ map, such that $dz$ is nowhere vanishing. (For example, take $S^1$ to be the unit circle with angle coordinate $\theta$ and $z=e^{i\theta}$). Then we have a identification $- \wedge d \log(z): L_{sm}\g:=C^{\infty}(S^1,\g) \xrightarrow{\sim} \Omega^1(S^1,\g)$.

We have 
\[\Conn_G({S^1})(pt)=L_{sm}\g /'  L_{sm}G  \]

And the action above of $L_{sm}G$ on $L_{sm}\g$ is identified with the gauge transformation (twisted adjoint action): $\ad'_g(a) := gag^{-1} - \frac{dg}{d\log(z)}  \cdot g^{-1}  $ for $g \in L_{sm}G, a \in L_{sm}\g$.

We have $X_*(T) \to L_{sm}T$ via $\lambda \mapsto \lambda\circ z$, then $\frt \subset L_{sm}\frt$ is stable under the gauge action of $X_*(T)$ and the action is identified as translation under $X_*(T) \hookrightarrow X_*(T) \otimes \C \simeq \frt$, where the last isomorphism is given by $(\lambda,c) \mapsto d\lambda(c)$. The group $\dot{W}_\aff \subset L_{sm}G$ acts on $\t$ via gauge transformation, this action factor through $W_\aff$. When restricted to $\t_\R$, this action of $W_\aff$ equal to the one in Section~\ref{Lietheoretic}.

For $a \in \frt \subset L_{sm}\g,$ put $\Phi_a:=\{ \alpha \in \Phi_\aff \;| \; \alpha(a)=0 \}$, $W_a:=C_{W_\aff}(A)$, $G_A:= C_{L_{sm}G}(a)$ and $ \g_a:=Lie(G_a)$. For $\bfa= (a_i) \in \frt^n$, let $G_\bfa := \bigcap_{i=1}^n G_{a_i}, \g_{{\bfa}}:= \bigcap_{i=1}^n \g_{a_i} , \Phi_\bfa := \bigcap_{i=1}^n \Phi_{a_i},  $ and $W_\bfa:=\bigcap_{i=1}^n W_{a_i}.$

\begin{thm}
	\label{covergroup}
	\begin{enumerate}
		\item $G_\bfa^0= <T, \exp  \g_\alpha \,|\, \alpha \in \Phi_\bfa>$.
		\item $G_\bfa=<G^0_\bfa, \dot{w} \,|\, w \in W_\bfa>, $ where $W_\bfa=C_{W_\aff}(\bfa)$. In particular, it agree with $G_a$ in Definition~\ref{parabolicfacet} for $\bfa= (a) \in \t_\R$.
		
		\item The space $\g_\bfa$ is stable under the gauge transformation of $G_\bfa$. The translation by $-a$ gives an isomorphism of stacks $-a: \g_a/'G_a \xrightarrow{\sim}  \g_a/G_a$, where the later action is adjoint action.
		
		\item Let $\chi'_a : \g_a \to \t_a//W_a$ the characteristic polynomial map with respect to the gauge action. Let $ \t_a^{et}:=\{ x \in \t \,|\, W_x \subset W_a, \Phi_x \subset \Phi_a \}  $, and $\g^{et}_a := {\chi'_a}^{-1}(\t_a^{et}//W_a)$. Then the natural map $p^{et}_a: \g^{et}_a/G_a \to \Loc_G(S^1)$ is (representable) \'etale. And $p^{et,*}_a(\N)=\N$.
		
		\item Let $S \subset \t$ be a $W_\aff$-invariant subset, for each $a \in S$, let $V_a \subset \t_a$ be $W_a$-invariant open subset, satisfying $V_{w(a)}=w(V_a)$ for all $w \in W_\aff$. Let $V_\bfa:= \cap_{a \in \bfa} V_a$, and $U_\bfa:= \chi_\bfa^{-1}(V_\bfa//W_\bfa)$, then we have a functor by sending $\bfa$ to $U_\bfa/'G_\bfa$ and $*$ to $\Loc_G(S^1)$:
		
		$$\xymatrix{ \bU: \int^{\Delta^{op,\triangleright}} S^\bullet_{\dot{W}_\aff}  \ar[r] & \Stk} $$
		
		Assume further more that $V_\bfa \subset \t_\bfa^{et}$, and $\bigcup_{a \in S} V_a=\frt$, then the induced map is an isomorphism:
		$$\xymatrix{ \colim_{\int^{\Delta^{op}} S^\bullet_{\dot{W}_\aff} } \bU \ar[r]^-{\sim} & \Loc_G(S^1) \simeq G/G  }$$

		\item  Assume that $G$ is simply-connected, let $\t_J^{se}:=St_J \times i \t_\R \subset \t$, and $\g_J^{se}:=\chi^{-1}(\t_J^{se}//W_J)$. Then there is an isomorphism:
		$$\xymatrix{\colim_{J \in \mathscr F_C^{op}} \g_J^{se}/'G_J \ar[r]^-{\sim} & \Loc_G(S^1) \simeq G/G }$$
		
	\end{enumerate}
\end{thm}
\begin{proof}
	(1) This is similar to Corollary~\ref{Gs0}. We have $\g_a = \{x \in C^{\infty}(S^1,\mathfrak{g}): dx+[a,x] \wedge d \log(z)=0 \}$, Let $x=h + \sum_{\alpha \in \Phi}f_{\alpha} x_{\alpha}$, where $h: S^1 \rightarrow \mathfrak{t}, f_{\alpha}: S^1 \rightarrow \mathbb{C}$. 
	Then the equation $dx+[a,x] \wedge d \log(z)=0$ is equivalent to $dh=0$ and $df_{\alpha}=  \alpha(a) f_{\alpha} \wedge d \log(z)$. The first equation has solution constant functions.  
	The second equation has a nontrivial solution only when $\alpha(a) \in \Z$, and in this case,  the solutions are $f_\alpha=  cz^{\alpha(a)}, c \in \CC$. \\
	(2) Similar to Corollary~\ref{automorphism}, where we need a version of Lemma~\ref{twist: normalizer gs}, with $L_{hol}$ replaced by $L_{sm}$, and $G_s$ by $G_a$. \\
	(3) Similar to Proposition~\ref{twistedconjugation}. As a remark, the map $-a: \g_a/'G_a \rightarrow \g_a/G_a$ can be thought of as untwisting the gauge transformation. Since the gauge transformation is an affine linear action, and the action of $G_a$ fix $a$, so re-center the affine space $\g_a$ at $a$ will make the action of $G_a$ a linear action (in fact adjoint action).\\
	(4) Similar to Theorem~\ref{etalesurjective}. The $1$-shifted symplectic structure on $\Loc_G(S^1)$ is used.\\
	(5) Similar to Theorem~\ref{colimitdiagram}.  \\
	(6) Similar to Theorem~\ref{colimitdiagram2}(1).
\end{proof}

From Theorem~\ref{covergroup}(4)(5)(6), and Proposition~\ref{descentss}, we have:
\begin{thm} 
	\label{sesheaves}
	There is an equivalence:
	$$(1) \xymatrix{ \lim_{(\int^{\Delta^{op}} S^\bullet_{\dot{W}_\aff})^{op} } Sh_\N(U_\bfa/'G_\bfa)  & Sh_\N(\Loc_G(S^1)) \simeq Sh_\N(G/G)  \ar[l]_-{\sim} \  }$$
	And for $G$ simply-connected:
	$$ (2) \xymatrix{\lim_{J \in \mathscr F_C} Sh_\N(\g_J^{se}/'G_J) & Sh_\N(\Loc_G(S^1))  \simeq  Sh_\N(G/G)     \ar[l]_-{\sim}  }$$
\end{thm}

\subsection{Holomorphic gauge theory on elliptic curves}
\label{elliptichol}
 Let $\omega= (\omega_1,\omega_2)$ be a pair of complex numbers not contained in the same real line. $E =E_\omega:= \C/(\Z \omega_1 \oplus \Z \omega_2)$ an elliptic curve. As we shall established below, similar to previous sections, the holomorphic gauge uniformization gives an nonabelian analogue of the uniformization $\C \to E$. Results in this section will not be used in our main theorem.

\subsubsection{Holomorphic gauge uniformization on $E$.}
Denote by $\usG$ the trivial smooth $G$-bundle on $E$,  by $\mathcal{A}^{0,1}(\usG)$ the space of (0,1)-connections on $\usG$. Any such connection $\nabla$ defines a holomorphic structure on $\usG$ by defining the holomorphic sections are those section $s$ satisfying $\nabla(s)=0$. Since every degree 0 holomorphic $G$-bundle on $E$ is trivial as smooth bundle, we have an isomorphism of groupoids $\Bun_G^0(E)(pt) = \mathcal{A}^{0,1}(\usG)/ \Aut(\usG)$. We have an identification $\Aut(\usG) \simeq C^{\infty}(E,G)$. The $\bar{\partial}$ operator and the (0,1)-form $d\bar{z}$ give identifications $\mathcal{A}^{0,1}(\usG) \simeq \Omega^{0,1}(E, \mathfrak{g}) \simeq C^{\infty}(E,\g)$. 
Hence we have 
\[\Bun_G^0(E)(pt)=C^{\infty}(E,\g) /'  C^{\infty} (E,G)  \]

And the action above is identified with the Gauge transformation: $\ad'_g(b) := gbg^{-1} - \bar{\partial} g  \cdot g^{-1}  $ for $g \in C^{\infty} (E,G),$ and $b \in C^{\infty}(E,\g)$. 

Let $ \S_1, \S_2$ be two copies of the unit circle. We have isomorphism of Lie groups $\S_1 \times \S_2 \xrightarrow{\simeq} E$, by $(\theta_1, \theta_2) \mapsto  \frac{\omega_1 \theta_1 + \omega_2 \theta_2}{2 \pi}$. This induces $X_*(T) \times X_*(T) \simeq \Hom_{Lie} (E, T)$. An easy calculation shows that under the identification 
$$\xymatrix { \t_\R \times \t_\R  \ar[r]^-{\sim} &   \t, &   (A_1,A_2) \ar@{|->}[r] & \frac{-2\pi i}{\omega_1 \bar{\omega}_2 - \bar{\omega}_1 \omega_2}  (\omega_2 A_1 - \omega_1 A_2)}   $$
The translation of $X_*(T) \times X_*(T)$ on $ \t_\R \times \t_\R $ is identified with the gauge transformation of $\Hom_{Lie} (E, T) \subset C^{\infty}(E,T)$ on $\t \subset C^{\infty}(E,\t)$ (as constant maps). Let $\Phi_{\ellp}:= \Z \times \Z \times \Phi$, and for any $\alpha= (n_1,n_2,\alpha_0) \in \Phi$, define $\g_\alpha:=e^{i(n_1\theta_1+ n_2\theta_2)}\g_{\alpha_0} \subset C^{\infty}(E,\g)$, and such $\alpha$ defines a map $\alpha: \t_\R \times \t_\R \to \R \times \R,$ by $(a_1,a_2) \mapsto (\alpha(a_1)+n_1, \alpha(a_2) + n_2 )$. And define $W_\ellp:=  (X_*(T) \times X_*(T)) \rtimes W$, and $\dot{W}_\ellp \subset (X_*(T) \times X_*(T)) \rtimes N_G(T)$ be a subgroup, such that $\dot{W}_\ellp \to W_\ellp$ is surjective. For $b= (b_1,b_2)$ under the identification, define $\Phi_b:=\{\alpha \in \Phi_\ellp \;|\; \alpha(a_1,a_2)=0 \}$, and $W_b:=C_{W_\ellp}(a_1,a_2)$.

	For $b= (a_1,a_2) \in \frt \subset C^{\infty}(E,\g) $, let $G_b:= G^{\omega}_b:= C_{C^{\infty}(E_\omega,G)}(b)$ the stabilizer under the gauge transformation, and $ \g_b:=Lie(G_b)$.  For $\bfb= (b_i) \in \frt^n$, let $G_\bfb := \bigcap_{i=1}^n G_{b_i}, \g_{{\bfb}}:= \bigcap_{i=1}^n \g_{b_i} , \Phi_\bfb := \bigcap_{i=1}^n \Phi_{b_i},  $ and $W_\bfb:=\bigcap_{i=1}^n W_{b_i}.$ The following theorem is analogous to Theorem~\ref{covergroup}, we shall omit the proof.

\begin{thm}
	\label{gaugecoverge}

	\begin{enumerate}
		\item $G_\bfb^0= <T, \exp  \g_\alpha \,|\, \alpha \in \Phi_\bfb>$.
		\item $G_\B=<G^0_\bfb, \dot{w} \,|\, w \in W_\bfb>.$

		\item Let $\chi'_\bfb : \g_\bfb \to \t_\bfb//'W_\B$ the characteristic polynomial map with respect to the gauge action. Let $ \t_\bfb^{et}:=\{ x \in \t \,|\, W_x \subset W_\bfb, \Phi_x \subset \Phi_\bfb \}  $, and $\g^{et}_\bfb := {\chi'_\bfb}^{-1}(\t_\bfb^{et}//'W_\bfb)$. Then the natural map $p_\bfb: \g^{et}_\bfb/'G_\bfb \to \Bun_G^0(E)$ is (representable) \'etale.
		
		\item Let $S \subset \t$ be a $W_\ellp$-invariant subset, for each $B \in S$, let $V_b \subset \t_b^{et}$ be $W_b$-invariant open subset, satisfying $V_{w(b)}=w(V_b)$ for all $w \in W_\ellp$. Let $V_\bfb:= \cap_{b \in \bfb} V_b$, and $U_\bfb:= {\chi'_\bfb}^{-1}(V_\bfb//'W_\bfb)$, then we have a functor by sending $\bfb$ to $U_\bfb/'G_\bfb$ and $pt$ to $G_E$:
		
		$$\xymatrix{ \bU: \int^{\Delta^{op,\triangleright}} S^\bullet_{\dot{W}_\ellp}  \ar[r] & \Stk} $$
		
		Assume further more that $V_\bfb \subset \t_\bfb^{et}$, and $\bigcup_{b \in S} V_b=\frt$, then the induced map is an isomorphism:
		$$\xymatrix{ \colim_{\int^{\Delta^{op}} S^\bullet_{\dot{W}_\ellp} } \bU \ar[r]^-{\sim} & G_E }$$
		
		\item There is induced equivalence	$$\xymatrix{ \lim_{(\int^{\Delta^{op}} S^\bullet_{\dot{W}_\ellp} )^{op} }Sh_\N(U_\bfb/'G_\bfb)  & Sh_\N(G_E)  \ar[l]_-{\sim} \  }$$

	\end{enumerate}
\end{thm}

Recall that the points in coarse moduli  $\fre_E$ can be identified with the set of isomorphism classes of degree $0$ semisimple $G$-bundles. 
\begin{cor}
	\label{smoothnessfre}
	Let $\mathcal{P}$ be a point in $\fre_E$, assume that $\Aut(\mathcal{P})$ is connected. Then $\fre_E$ is smooth at $\mathcal{P}$.
\end{cor}
\begin{proof}
	By Theorem~\ref{gaugecoverge}, near $\mathcal{P}$, the stack $G_E$ is locally isomorphic to the quotient stack $Lie(\Aut(\mathcal{P}))/\Aut(\mathcal{P})$ near $0$. When $\Aut(\mathcal{P})$ is connected reductive, the coarse moduli of the later stack is smooth (in fact, an affine space).
\end{proof}	

\begin{rmk}
	\begin{enumerate}
		\item By a theorem of Looijenga, $\fre_E$ is isomorphic to a weighted projective space (with explicit weights depending on the root datum), and it is not always smooth.	\
		\item It is possible to deduce Corollary~\ref{smoothnessfre}  from some general slicing theorem such as in \cite{AHR}. 
	\end{enumerate}
\end{rmk}	

\subsubsection{Relation with gauge uniformization on circle}

\begin{no}
	We denote by $G^i_a, \Phi^i_\aff, W_\aff^i$ the corresponding notation associated to $\S_i, i=1,2.$
\end{no}

The inclusion $\xymatrixcolsep{0.1 in}
\xymatrix{  & {\{0\}} \ar[ld] \ar[rd] \\
	\S_1  \ar[rd]  &  &  \S_2, \ar[ld] \\
	&   E_\omega
} $
induces $\xymatrixcolsep{-0.2 in}
\xymatrix{  & C^{\infty}(E_\omega,G) \ar[ld] \ar[rd] \\
	C^{\infty}(\S_1,G)  \ar[rd]  &  &  C^{\infty}(\S_2,G) \ar[ld] \\
	&   G
}$
\begin{prop}
	Under the above map, let $\bfb= (\bfa_1,\bfa_2)$ we have  
	\[\xymatrixcolsep{0.1 in}
	\xymatrix{  & G^{\omega}_\bfb \ar[ld] \ar[rd] \ar@{}|{\square}[dd]\\
		G^1_{\bfa_1} \ar[rd]  &  &  G^2_{\bfa_2} \ar[ld] \\
		&   G
	}\]
	and all the arrows are injective.
\end{prop}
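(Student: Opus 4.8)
The statement is a compatibility between the holomorphic gauge uniformization on $E_\omega$ and the two smooth gauge uniformizations on the circles $\S_1,\S_2$, for a constant connection $\B=(\A_1,\A_2)\in\t\subset C^\infty(E_\omega,\g)$. The goal is to show that restriction of smooth maps/functions along the inclusions $\{0\}\hookrightarrow \S_i\hookrightarrow E_\omega$ carries the $\omega$-stabilizer $G^\omega_\B$ injectively into $G^1_{\A_1}$, into $G^2_{\A_2}$, and finally into $G=\Aut(\usG)|_0$, and that the resulting square is Cartesian, i.e.\ $G^\omega_\B = G^1_{\A_1}\times_G G^2_{\A_2}$ as subgroups of $C^\infty(E_\omega,G)$ (more precisely, an element of $C^\infty(E_\omega,G)$ lies in $G^\omega_\B$ iff its restriction to $\S_1$ lies in $G^1_{\A_1}$ and its restriction to $\S_2$ lies in $G^2_{\A_2}$).

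\textbf{Step 1: reduce to the Lie algebra and identify all three groups explicitly.} By Theorem~\ref{gaugecoverge}(1)--(2), $G^\omega_\B$ is generated by $T$, the affine root subgroups $\exp\g_\alpha$ for $\alpha\in\Phi_\B$, and the Weyl lifts $\dot w$ for $w\in W_\B$; similarly Theorem~\ref{covergroup}(1)--(2) describes $G^i_{\A_i}$ in terms of $\Phi^i_{\A_i}\subset\Phi^i_\aff$ and $W^i_{\A_i}=C_{W^i_\aff}(\A_i)$. So the whole statement comes down to comparing the combinatorial data. First I would check the identities on Lie algebras: $\g^\omega_\B=\g^1_{\A_1}\cap\g^2_{\A_2}$ inside $C^\infty(E_\omega,\g)$, where the intersection is taken after restriction to the two circles. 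This is the concrete computation: write $B=(\A_1,\A_2)$ under the identification $\t_\R\times\t_\R\xrightarrow{\sim}\t$ recorded just before Theorem~\ref{gaugecoverge}, and note that an affine root $\alpha=(n_1,n_2,\alpha_0)\in\Phi_\daff$ vanishes on $(\A_1,\A_2)$ iff $\alpha_0(\A_1)+n_1=0$ and $\alpha_0(\A_2)+n_2=0$, i.e.\ iff its restriction to $\S_1$ is in $\Phi^1_{\A_1}$ and its restriction to $\S_2$ is in $\Phi^2_{\A_2}$. The point is exactly that the Fourier modes on $E_\omega=\S_1\times\S_2$ are products of modes on the two circles, so the ``holomorphic/constant in one direction'' conditions factor. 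This gives $\Phi_\B = \Phi^1_{\A_1}\cap\Phi^2_{\A_2}$ (under restriction) and hence the Lie algebra identity, together with injectivity of $\g^\omega_\B\to\g^i_{\A_i}$ since the evaluation-at-a-circle map is injective on these finite-dimensional root-span subalgebras (an element of $\g^\omega_\B$ is a Laurent-type expression determined by finitely many coefficients, none of which can vanish on a whole circle unless it is zero).

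\textbf{Step 2: upgrade to groups and check injectivity.} Injectivity of $G^\omega_\B\to C^\infty(\S_i,G)$: any $k\in G^\omega_\B$ restricts to a map $\S_i\to G$; since $G^\omega_\B$ is an algebraic group whose elements are ``loop-polynomial'' in the $E_\omega$-modes (by the explicit description of $G^0_\B$ and the finite component group), and since $\S_i$ is Zariski-dense in the relevant torus $\C^*$ of modes, the restriction map is injective — this is the group analogue of the evaluation-injectivity in Step~1, and can also be seen directly from $ev_0: G^\omega_\B\to G$ being injective (Theorem~\ref{gaugecoverge} gives $ev_{\mathrm{pt}}$ injective, cf.\ Proposition~\ref{gs}), which factors through both $G^i_{\A_i}$. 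That the restriction lands in $G^i_{\A_i}$ is immediate: the gauge-fixing condition $Gg^{\bar\partial}_k(\B)=\B$ on $E_\omega$ restricts to the gauge-fixing condition on $\S_i$ for $\A_i$ (constant connections restrict to constant connections, and the restriction of the $\bar\partial$-gauge equation to a coordinate circle is the $S^1$-gauge equation — this is where I must be slightly careful with the precise normalization of $z$ on each circle versus $\bar z$ on $E$, matching the $-2\pi i/(\omega_1\bar\omega_2-\bar\omega_1\omega_2)$ factor, but it is a bookkeeping matter). Finally $G^\omega_\B\to G$ is injective because it factors through the injection $G^\omega_\B\hookrightarrow G^1_{\A_1}$ followed by $ev_0: G^1_{\A_1}\hookrightarrow G$.

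\textbf{Step 3: the Cartesian property.} For the square to be Cartesian I would show: if $k\in C^\infty(E_\omega,G)$ has $k|_{\S_1}\in G^1_{\A_1}$ and $k|_{\S_2}\in G^2_{\A_2}$, then $k\in G^\omega_\B$. Equivalently, on the level of subgroups of $C^\infty(E_\omega,G)$, $G^1_{\A_1}\cap G^2_{\A_2} = G^\omega_\B$ (intersection inside the ambient smooth gauge group, with $G^i_{\A_i}$ viewed via the inclusions as subgroups fixing $\B$ in the respective directions). Decompose $k$ into Fourier modes on $E_\omega=\S_1\times\S_2$; the condition $k|_{\S_1}\in G^1_{\A_1}$ forces, after expanding in $\theta_1$ at $\theta_2=0$, that only modes $(n_1,n_2,\alpha_0)$ with $\alpha_0(\A_1)+n_1=0$ survive (and that $k$ is valued in $G^1_{\A_1}$, in particular has the right component); symmetrically for $\S_2$. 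Together these two conditions pin $k$ down to exactly the span described in Step~1, i.e.\ $k\in G^\omega_\B$. The cleanest packaging is: use Theorem~\ref{gaugecoverge} and Theorem~\ref{covergroup} to identify $G^\omega_\B$, $G^1_{\A_1}$, $G^2_{\A_2}$ with explicit algebraic subgroups of a loop group in two, one, and one ``variables'' respectively, and then the square is the evident base-change square of these explicit groups.

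\textbf{Expected main obstacle.} The real content is entirely in Step~1/Step~3 — the factorization of Fourier modes on the product $E_\omega=\S_1\times\S_2$ and the compatibility of normalizations (the $-2\pi i/(\omega_1\bar\omega_2-\bar\omega_1\omega_2)$ identification of $\t_\R\times\t_\R$ with $\t$, and $\bar z$ on $E$ versus $z$ on each circle). Once the bookkeeping is set up correctly, injectivity and the Cartesian property are formal consequences of the explicit descriptions in Theorems~\ref{covergroup} and~\ref{gaugecoverge}; there is no hard analysis. Accordingly I would state the proof as: ``Follows by comparing the explicit descriptions in Theorem~\ref{gaugecoverge} and Theorem~\ref{covergroup}, using that $\Phi_\daff=\Z\times\Z\times\Phi$ restricts to $\Phi^i_\aff=\Z\times\Phi$ on $\S_i$ and that evaluation on a coordinate circle is injective on the relevant finite-dimensional subalgebras/subgroups; the Cartesian property is the evident base change of the corresponding explicit algebraic groups.''
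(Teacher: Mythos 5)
Your overall strategy matches the paper's: the paper proves the proposition by observing that the corresponding squares of Weyl group data ($W_\B, W^1_{\A_1}, W^2_{\A_2}, W$) and affine root data ($\Phi_\B, \Phi^1_{\A_1}, \Phi^2_{\A_2}, \Phi$) are Cartesian, and then invoking the explicit generation of the groups from Theorems~\ref{covergroup}(1)--(2) and~\ref{gaugecoverge}(1)--(2). Your Steps~1 and~2 carry out exactly this comparison of combinatorial data together with the injectivity of the evaluation maps, which is the content the paper takes as ``easy to check.''

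However, Step~3 misstates the Cartesian condition, and the intermediate claim you propose to verify is false. You write that Cartesianity is equivalent to: ``if $k\in C^\infty(E_\omega,G)$ has $k|_{\S_1}\in G^1_{\A_1}$ and $k|_{\S_2}\in G^2_{\A_2}$, then $k\in G^\omega_\B$.'' This is not what the square asserts, and it fails in general: take $G$ semisimple and $\B$ generic so that $G^\omega_\B = T$, and let $k(\theta_1,\theta_2)=\exp\bigl(\sin\theta_1\sin\theta_2\cdot H\bigr)$ for any $H\in\t$. Then $k|_{\S_1}$ and $k|_{\S_2}$ are both the constant identity, hence lie in $T=G^i_{\A_i}$, yet $k$ is non-constant and so $k\notin T=G^\omega_\B$. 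The fiber product in the proposition is $G^1_{\A_1}\times_G G^2_{\A_2}$, formed over $G$ via the evaluation maps $ev_0$, and consists of pairs $(g_1,g_2)$ with $ev_0(g_1)=ev_0(g_2)$; it is not an intersection of subgroups of $C^\infty(E_\omega,G)$. The correct route (which your final paragraph gestures at and which is what the paper does) is: use the injectivity from Step~2 to identify all three groups with their images $\bar G^\omega_\B, \bar G^1_{\A_1}, \bar G^2_{\A_2}\subset G$ under $ev_0$, and then show $\bar G^\omega_\B = \bar G^1_{\A_1}\cap\bar G^2_{\A_2}$ inside $G$. This equality is exactly what the Cartesian squares for $\Phi$'s and $W$'s give, since all these are reductive subgroups of $G$ of maximal rank sharing the torus $T$, whose identity components are determined by the image of their affine roots in $\Phi$ and whose component groups are controlled by the respective Weyl groups. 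Your Fourier-mode expansion argument for the false statement does not repair to a proof of the correct one, because the restriction $k|_{\S_1}(\theta_1)=\sum_{n_2}a_{n_1,n_2}(\theta_1)$ only constrains the sums over $n_2$, not the individual modes.
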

\begin{proof}
	It is easy to check that 
	\[
	\xymatrixcolsep{0. in}
	\xymatrixrowsep{0.2 in}
	\xymatrix{  & W_{\bfb} \ar[ld] \ar[rd] \ar@{}|{\square}[dd]\\
		W^1_{\bfa_1}  \ar[rd]  &  &  W^2_{\bfa_2}, \ar[ld] \\
		&   W
	} 
	\xymatrix{  & \Phi_{\bfb} \ar[ld] \ar[rd] \ar@{}|{\square}[dd]\\
		\Phi^1_{\bfa_1}  \ar[rd]  &  &  \Phi^2_{\bfa_2}, \ar[ld] \\
		&   \Phi
	}
	\]
	The proposition follows since the groups involved are determined by the above data.
\end{proof}

\begin{rmk}
	Let $G_c$ be a maximal compact subgroup of $G$. Under Yang-Mills equation, this Proposition can be thought of as an analogue of the fact that for a $G_c$-local system $L$ on $E$, we have 
	\[\xymatrixcolsep{0.1 in}
	\xymatrix{  & \Aut(L) \ar[ld] \ar[rd] \ar@{}|{\square}[dd]\\
		\Aut(L|_{\S_1}) \ar[rd]  &  &  \Aut({L|_{\S_2}}) \ar[ld] \\
		&   \Aut(L|_0)=G_c
	}\]
	Note that both $\Aut(L|_{\S_i})$ and $G^i_{a_i}$ are of the form $C_G(s)$ for some $s \in T_c $ ( or $T$). In particular, they are connected if $G$ is simply-connected.
\end{rmk}

\section{Character sheaves}

\label{liealgebra}
In this section, we establish results on propagation (Proposition~\ref{probagation}) and untwisting (Proposition~\ref{untwisting}) for character sheaves, and then use them to prove our main theorem.

\begin{no}
Recall $\Phi_\aff$ is by definition of set of affine roots of $G$. A subset $R \subset \Phi_\aff$ is called \textit{admissible} if: 
	\begin{itemize}
		\item  $ \alpha + \beta \in R$, for any $\alpha,\beta \in R$, such that $\alpha+\beta \in \Phi_\aff;$
		\item  the map $R \subset  \Phi_\aff \to \Phi$ is injective, where $\Phi_\aff \to \Phi$ is the natural map via $\alpha_0+n \mapsto \alpha_0$.
	\end{itemize}	 
	
	Let $\mathcal{R}:=\{ R \subset \Phi_\aff \; | \; R \text{ admissible }  \}$. For a connected finite dimensional subgroup of $LG$, we shall use the notation $K \in \mathcal{R}$ if $K \supset T$ and the set of roots of $T$ acting on $Lie(K)$ is admissible. Similar for $\frk \in \mathcal{R}$. We see that $\g_s,\g_a,\g_b, G_a, G^0_s,G^0_b \in \mathcal{R}$.
	
Let $L,K \in \mathcal{R},$ with Lie algebra $\frl,\frk$, and assume that $L \subset K$, denote by:
 \begin{itemize} 
 	\item  $\frl/L$ quotient stack  with respect to the adjoint action $\textup{ad}$. 
 	\item $\frl/'L$ the quotient stack with respect to the gauge transformation $\ad '$.
	\item  $L/L$ quotient stack  with respect to the adjoint action $\Ad$. 
	\item  $L/'L$ quotient stack  with respect to the twisted conjugation action $\Ad '$. 
 	\item $\frc_{\frl}:=\{c \in \frl \;|\;  \ad_g(c)=c, \forall g \in L\}$ the center of $\frl$.
 	\item $\frc'_{\frl}:=\{c \in \frl \;|\;  \ad'_g(c)=c, \forall g \in L\}$ the twisted center of $\frl$.
 	\item $Z(L):=\{c \in L \;|\; \Ad_g(c)=c, \forall g \in L\}$ the center of $L$.
 	\item $Z'(L):=\{c \in L \;|\; \Ad '_g(c)=c, \forall g \in L\}$ the twisted center of $L$.
 	\item $W_L$ the Weyl group of $L$.
 	\item $\chi_\frl :  \frl \to \frt//W_L, \chi_L :  L \to \frt//W_L$, the characteristic polynomials.
 	\item $\chi'_\frl :  \frl \to \frt//'W_L, \chi'_L :  L \to \frt//'W_L$, the twisted characteristic polynomials.
    \item For $x \in \frl$, denote $C_L(x):=\{g \in L \;|\; \ad_g(x)=x\}$ and $C'_L(x):=\{g \in L \;|\; \ad'_g(x)=x\}.$
    \item For $x \in L$, denote $C_L(x):=\{g \in L \;|\; \Ad_g(x)=x\}$ and $C'_L(x):=\{g \in L \;|\; \Ad'_g(x)=x\}.$
    \item $\frl^{\frk\textup{-reg}}:=\{ x \in \frl \;|\;  C_K(x)=C_L(x) \}$, and $\frl^{\frk\textup{-reg}'}:=\{ x \in \frl \;|\;  C'_K(x)=C'_L(x) \}. $
    \item $L^{K\textup{-reg}}:=\{ x \in L \;|\;  C_K(x)=C_L(x) \}$, and $L^{K\textup{-reg}'}:=\{ x \in L \;|\;  C'_K(x)=C'_L(x) \}. $
 \end{itemize}
\end{no}	

The following proposition is easy to check.

	\begin{prop}  \label{untwistsingle}
		Let $L,K \in \mathcal{R}$ and $L \subset K$, then:
		\begin{enumerate}
			\item 
		$\frc'_{\frl} \neq \emptyset$. For any $ c\in \frc'_{\frl}$, translation by $-c$ induces identifications: $\frl/'L \simeq \frl/L, \frc'_\frl \simeq \frc_{\frl}, \chi'_\frl = \chi_\frl, C'_L(x)=C_L(x-c),\frl^{\frk\textup{-reg}'}= \frl^{\frk\textup{-reg}}$.
		\item $Z'(L) \neq \emptyset$. For $c \in Z'(L)$,  multiplication by $c^{-1}$ induces identifications: $L/'L \simeq L/L, Z'(L) \simeq Z(L), \chi'_L= \chi_L, C'_L(x)=C_L(c^{-1}x),L^{K\textup{-reg}'}= L^{K\textup{-reg}}$.
		\end{enumerate}
	\end{prop}
	
	Many results in the following sections are stated for both twisted and untwisted case. We shall only give proof for the untwisted one because the twisted statement follows from Proposition~\ref{untwistsingle}.
	
	We call $L \subset P \subset G$ a \textit{parabolic sequence} if $P$ is a parabolic subgroup of the reductive $G$ and  $L \xrightarrow{\sim} P/U_P$ is an isomorphism, where $U_P$ is the unipotent radical of $P$.

\begin{lem} 
	\label{baserestriction}
	\begin{enumerate}

		\item The maps $f:\frl^{\frk \textup{-reg}}/L \to \frk/K,$ and $f':\frl^{\frk \textup{-reg}'}/'L \to \frk/'K$  are \'etale.
	
		\item The maps $df^*$ and $df'{ }^{*}$ respect nilpotent cones.
	
	\end{enumerate}
\end{lem}
\begin{proof}
	(1) Follows from Proposition~\ref{symplectomorphismetale}.       
	(2) For any $Y \in \mfl^{\frk\textup{-reg}}$, and under the identification by shift symplectic form,  we have $H^0(df_Y^*)|_{\mfl}=Id$.
\end{proof}

\begin{lem}
	\label{levigenericiso}
		Let $L \subset Q \subset K$ be a parabolic sequence, denote by $i:\frl \subset \frq$ the inclusion and $q: \frq \to \frl$ the projection. Let $U \subset \frl^{\frk\textup{-reg}}$ invariant open subset, denote by $ ^Q U :=  q^{-1}(U) \subset \frq $, then  $i: U/L \leftrightarrows \;^Q U/Q : q$ are inverses to each other. 
\end{lem}

\begin{proof}
 The map $i$ is fully-faithful by definition of $ \frl^{\frk\textup{-reg}}$. To prove that $i$ is essentially surjective, we need to show that the unipotent radical $N_0$ of $Q$ acts transitively on $ x + \frn_0$, for any $x \in  \frl^{\frk\textup{-reg}}$. Now let $N_{i}:=[N_{i-1},N_0]$, and $\frn_i$ the corresponding Lie algebra, we have $[\frq,\frn_i] \subset \frn_i$.  For any $x \in \frq$, the adjoint action $N_i$ stabilize  $x + \frn_i$. To see this, let $u \in \frn_i$, then by Baker-Campbell-Hausdorff formula:
$$Ad_{e^u} x = x + [u,x] + \frac{1}{2!}[u,[u,x]] +... \in x + \frn_i. $$ 
Now assume $x \in  \frl^{\frk\textup{-reg}}$, then $Ad_x : \frn_i \xrightarrow{\sim} \frn_i$ is an isomorphism (if $[u,x]=0,$ then $e^u \in C_K(x)=C_L(x) \subset L$, hence $u=0$). We prove by induction (in reverse order) that the action of $N_i$ on $x + \frn_i$ is transitive. For $n>>0$, we have $N_n=1$, and the action is automatically transitive. Assume now that $N_{i+1}$ acts transitively. Suffices to show that every $N_i$ orbit intersects $x + \frn_{i+1}$. Now for any $v \in \frn_i$, take $u=Ad_x^{-1}v \in \frn_i$. Then $Ad_{e^u}(x+v) = x +v+ [u,x]+([u,v] +  \frac{1}{2!}[u,[u,x+v]] +...)  \in x + \frn_{i+1}.$ Hence the action of $N_i$ is also transitive.  
\end{proof}	

\subsection{Propagation and untwisting}
\label{propagation_untwisting}

Let $\mathscr{P}$ be the category consist of object $(K,U)$, where $K \in \mathcal{R}$ reductive, and $U$ a twisted-invariant open subset of $\frk$. with morphisms  
$$\mathscr{P}((G_1,U_1),(G_2,U_2))=
\begin{cases}
\{P | G_1 \subset P \subset G_2 \textup{ a parabolic sequence}\},  \;\;\; \textup{ if } G_1 \subset G_2, U_1 \subset U_2 \cap \g_1^{\g_2\textup{-reg}'};   \\
\emptyset, \;\;\; \textup{ otherwise.}
\end{cases}  
$$
The composition is given by 
$$\mathscr{P}((G_2,U_2),(G_3,U_3)) \times \mathscr{P}((G_1,U_1),(G_2,U_2)) \to \mathscr{P}((G_1,U_1),(G_3,U_3))$$
$$ (P_3, P_2) \mapsto P_3 \circ P_2 $$

The nerve of $\mathscr{P}$ 
is the simplicial set  $\textup{N}(\mathscr{P})$ with 
$\textup{N}(\mathscr{P})_n:$= the tuples $\{ (G_0,G_1,...,G_n);(P_1,P_2,...,P_n);(U_0,U_1,...,U_n) \}$, such that $G_i,P_i \in \mathcal{R}$, and $G_{i-1} \subset P_i \subset G_i$ is a parabolic sequence, and $U_i$ is twisted-invariant open subset in $ \g_{i}^{\g_{i+1}\textup{-reg}'}$. The face and degeneration maps are defined in the obvious way.

Put $P_{[i,j]}:=
\begin{cases}
P_{i+1} \circ ... \circ P_{j}, \text{ for } i < j. \\
  G_j, \text{ for } i=j.
  \end{cases}$

We define three functors $\alpha,\alpha',\beta': \textup{N}(\mathscr{P}) \to \textup{Corr}(\Stk^\circ)$, via 
\begin{enumerate}
\item	$\alpha(G_0,G_1,...,G_n;P_1,P_2,...,P_n;U_0,U_1,...,U_n)_{i,j}= \frp_{[i,j]}/P_{[i,j]}.$
\item $\alpha'(G_0,G_1,...,G_n;P_1,P_2,...,P_n;U_0,U_1,...,U_n)_{i,j}= \frp_{[i,j]}/'P_{[i,j]}.$
\item $\beta'(G_0,G_1,...,G_n;P_1,P_2,...,P_n;U_0,U_1,...,U_n)_{i,j}= U_i/'G_i.$
\end{enumerate}

Note that ${\alpha}$ and $\alpha'$ are well defined by Proposition~\ref{transad}. 

Let $\mathscr{P}_{ret} \subset \mathscr{P}$ be full subcategory consists of objects whose $U$-factors are retractable w.r.t $\N$.

\begin{prop}
	\label{probagation}
	There is a natural isomorphism  $Sh_\N(\overline{\alpha}')|_{\textup{N}(\mathscr{P}_{ret})} \simeq Sh_\N(\overline{\beta'})|_{\textup{N}(\mathscr{P}_{ret})}$.
\end{prop}
\begin{proof}
By Lemma~\ref{levigenericiso}, the natural maps $U_{i}/'G_{i} \to \frp_{[i,j]}/'P_{[i,j]}$
induce cartesian squares:
$$  
\xymatrix{
	U_{i}/'G_{i} \ar@{=}[r] \ar[d]  &  U_{i}/'G_{i}  \ar[d] \\
	\frp_{[i,j+1]}/'P_{[i,j+1]}  \ar[r] & \frp_{[i,j]}/'P_{[i,j]}
	.}
$$
And by Corollary~\ref{natural trans in corr}, 
this defines a natural transformation $\eta': \beta'\Rightarrow \alpha'$, or equivalently  $\overline{\eta}':\overline{\alpha}'  \Rightarrow \overline{\beta}'$ (notation c.f. \ref{conjugatefunctor}).
Hence we get an natural transformation $Sh(\overline{\eta}'): Sh(\overline{\alpha}') \Rightarrow  Sh(\overline{\beta}').$ The arrows in $Sh(\overline{\alpha}')$ and $Sh(\overline{\beta}')$ are given by parabolic restriction and restriction to open substacks, hence preserve nilpotent singular support by Proposition~\ref{parabolicnilpotent} and Lemma~\ref{baserestriction}. Let $Sh_\N(\alpha') \subset Sh(\alpha')$ and $ Sh_\N(\beta') \subset Sh(\beta')$ be the corresponding functor that takes an object $(G_0,U_0)$ to $Sh_\N(\g_0/G_0)$ and $Sh_\N(U_0/G_0)$ respectively.
The natural transformation $Sh(\overline{\eta}')$ induces $Sh_\N(\overline{\eta}'):Sh_\N(\overline{\alpha}') \Rightarrow Sh_\N(\overline{\beta}')$. Then $Sh_\N(\overline{\eta}')|_{\textup{N}(\mathscr{P}_{ret})}$ is a natural isomorphism by the definition of retractable substacks.

\end{proof}

Define $ \textup{un}: \frc'_{\g} \times \g/'G \to \g/G$, via $(c,x) \mapsto x-c$. 
Put $\N_{\textup{fat}}:= \mathbf{0}_{\frc'_{\g}} \times \N \subset T^*(\frc'_{\g} \times \g/'G )$.

\begin{lem}
	\label{untwistequivalence}
$\textup{un}^*, \textup{un}_* $ preserves nilpotent singular support and  there are inverse equivalence of functors:
$$\textup{un}^*:Sh_\N(\g/G) \longleftrightarrow Sh_{\N_{\textup{fat}}}(\frc'_{\g} \times \g/'G ): \textup{un}_* $$
\end{lem}	
\begin{proof}
We can write $\textup{un}$ as the composition 
$ \frc'_{\g} \times \g/'G  \xrightarrow{\textup{Un}} \frc'_{\g} \times \g/G \xrightarrow{\pi} \g/G  $, where $\textup{Un}(c,x)=(c,x-c)$, and $\pi$ is the projection onto the second factor. $\textup{Un}$ is an isomorphism and preserves $\N_\textup{fat}$, hence it induces equivalences 
$$\textup{Un}^*:Sh_{\N_{\textup{fat}}}(\frc'_\g \times \g/G) \longleftrightarrow Sh_{\N_{\textup{fat}}}(\frc'_{\g} \times \g/'G ): \textup{Un}_* $$
We then prove the Lemma by applying Proposition~\ref{contractiblefibers} to $\pi$.

\end{proof}

\begin{prop}
	\label{untwisting}
	There is a natural isomorphism $Sh_\N(\overline{\alpha}') \simeq Sh_\N(\overline{\alpha})$.
\end{prop}

\begin{proof}
	Define $\alpha'_{\textup{fat}}:\textup{N}(\mathscr{P}) \to \Corr(\Stk^\circ)$, via $$\alpha'_{\textup{fat}}(G_0,G_1,...,G_n;P_1,P_2,...,P_n;U_0,U_1,...,U_n)_{i,j}= \frc'_{\g_j} \times \frp_{[i,j]}/'P_{[i,j]}. $$

	By Corollary~\ref{natural trans in corr}, the maps $ \frc'_{\g_{j}} \times \frp_{[i,j]}/' P_{[i,j]} \to \frp_{[i,j]}/ P_{[i,j]},$    
	via $(c,x) \mapsto x-c$ defines a natural transformation $\epsilon: \overline{\alpha}'_{\textup{fat}} \Rightarrow \overline{\alpha}$. By Lemma~\ref{untwistequivalence} The induced map $Sh_\N(\epsilon): Sh_{\N_\textup{fat}}(\overline{\alpha}'_{\textup{fat}}) \Rightarrow Sh_\N(\overline{\alpha})$ is an equivalence. Similarly define $\overline{\alpha}'_{\textup{fat}} \Rightarrow \overline{\alpha}'$ by projecting to the second factor. It induces equivalence $Sh_{\N_\textup{fat}}(\overline{\alpha}'_{\textup{fat}}) \Rightarrow Sh_\N(\overline{\alpha}')$. Hence we get $Sh_\N(\overline{\alpha}') \simeq Sh_\N(\overline{\alpha})$. 
\end{proof}

We also state the group analogue of the above statements. Proofs are similar and we shall omit.

Let $\widetilde{\mathscr{P}}$ be the category consist of object $(K,U)$, where $K \in \mathcal{R}$ reductive, and $U$ a twisted-invariant open subset of $K$, with morphisms 
$$\widetilde{\mathscr{P}}((G_1,U_1),(G_2,U_2))=
\begin{cases}
\{P | G_1 \subset P \subset G_2 \textup{ a parabolic sequence}\},  \;\;\; \textup{ if } G_1 \subset G_2, U_1 \subset U_2 \cap G_1^{G_2\textup{-reg}'};   \\
\emptyset, \;\;\; \textup{ otherwise.}
\end{cases}
$$

Define similarly the functors $\alpha,\alpha',\beta': \textup{N}(\widetilde{\mathscr{P}}) \to \textup{Corr}(\Stk^\circ)$, via 
\begin{enumerate}
	\item	$\widetilde{\alpha}(G_0,G_1,...,G_n;P_1,P_2,...,P_n;U_0,U_1,...,U_n)_{i,j}= P_{[i,j]}/P_{[i,j]}.$
	\item $\widetilde{\alpha}'(G_0,G_1,...,G_n;P_1,P_2,...,P_n;U_0,U_1,...,U_n)_{i,j}= P_{[i,j]}/'P_{[i,j]}.$
	\item $\widetilde{\beta}'(G_0,G_1,...,G_n;P_1,P_2,...,P_n;U_0,U_1,...,U_n)_{i,j}= U_i/'G_i.$
\end{enumerate}

\begin{prop} 
	\label{group retraction}
	There are equivalence of functors:
\begin{enumerate}
	\item $Sh_\N(\overline{\widetilde{\alpha}}')|_{\textup{N}(\widetilde{\mathscr{P}}_{ret})} \simeq Sh_\N(\overline{\widetilde{\beta}}')|_{\textup{N}(\widetilde{\mathscr{P}}_{ret})}.$
	\item 
	$Sh_\N(\overline{\widetilde{\alpha}}) \simeq Sh_\N(\overline{\widetilde{\alpha}}').$
\end{enumerate}

\end{prop}

Next, we give some examples of retractable substacks.

\begin{prop} 
	\label{baseopensetexample} 
	\begin{enumerate}
		\item 	Let $K$ be a reductive group with Lie algebra $\frk$. And $V \subset \frt$ be an $W_K$-invariant open subset. Then $V$ is star-shaped centered at some $c \in \frc_\frk$ if and only if  $U:=\chi_\frk^{-1}(V//W) \subset \frk$ is so. In this situation, $U/K$ is a retractable open substack of $\frk/K$ w.r.t $\N$. 
		\item Let $K \in \mathcal{R}$ and $V \subset \frt$ be an $W_K$-twisted-invariant open subset. Then $V$ is star-shaped centered at some $c \in \frc'_\frk$ if and only if  $U:={\chi'_\frk}^{-1}(V//'W) \subset \frk$ is so. In this situation, $U/'K$ is a retractable open substack of $\frk/'K$ w.r.t $\N$. 
	\end{enumerate}
\end{prop}
\begin{proof}
	Denote by $\pi: \frt \to \frt//W$.	Let $x \in U$, and $t \in V$, such that $\chi(x)=  \pi(t)$, then $\chi(\overline{cx}) = \pi(\overline{ct})$. Hence $V$ is star-shaped at $c$ if and only if $U$ is so.
	Now $U/K$ is retractable by Proposition~\ref{contraction}, since $\mathcal{N}$ is biconical with respect to the $\R^+$ action (which commutes with the $K$ action)  centered at $c$.
\end{proof}	
We also have the Lie group version of last proposition.
\begin{prop} 
	\label{levigroup}
	\begin{enumerate}
		\item Let $K$ be a reductive group,  $c \in Z(K)$, and $V$ be a $W$-invariant open subset of $T$, such that $\tilV:=\Exp^{-1}(c^{-1}V) \subset X_*(T) \otimes \C$ is convex and containing $\t_\R$. Put $U := \chi_K^{-1}(V//W)$, then $U/K$ is a retractable substack of $K/K$ w.r.t $\N$.
		
		\item Let $K \in \mathcal{R}$, $c \in Z'(K)$, and $V$ be a $W$-twisted-invariant open subset of $T$, such that $\tilV:=\Exp^{-1}(c^{-1}V) \subset X_*(T) \otimes \C$ is convex and containing $\t_\R$. Put $U := {\chi'_K}^{-1}(V//'W)$, then $U/'K$ is a retractable substack of $K/'K$ w.r.t $\N$.
	\end{enumerate}
	
\end{prop}
\begin{proof}
	Since $\N \subset T^*(K/K)$ is invariant under translation by central elements, it is suffices to assume $c=1$.
	In Proposition~\ref{sesheaves} (1), we could choose $U_a = {\chi'_a}^{-1}(V_a//'W_a)$ for $V_a$ convex in $\t$, and $V_a \cap \t_\R \neq \emptyset$. Then the restriction map $Sh_\N(K/K) \to Sh_\N( U/K)$ is induced by taking the limit over 
	$\int_{\Delta^{op}} S^\bullet_{\dot{W}_{\aff}}$ of the restrictions $Sh_\N({\chi'_\bfa}^{-1}(V_\bfa//'W_\bfa)/K_\bfa) \to Sh_\N(({\chi'_\bfa}^{-1}(V_\bfa \cap \tilde{V} //'W_\bfa)/K_\bfa)$, which is an isomorphism, because when $V_\bfa$ is non-empty, both ${\chi'_\bfa}^{-1}(V_\bfa//'W_\bfa)$ and ${\chi'_\bfa}^{-1}(V_\bfa \cap \tilde{V}//'W_\bfa) $ are retractable open subset of $\frk_\bfa$ by Proposition~\ref{baseopensetexample}. 
\end{proof}

\begin{cor}
	\label{se_is_retractible}
	\begin{enumerate}
		\item 	The substack $\g^{se}_J/'\g_J \subset \g_J/'\g_J$ is retractible w.r.t $\N$.
		\item 	The substack $G^{se}_J/'G_J \subset G_J/'G_J$ is retractible w.r.t $\N$.
	\end{enumerate}
\end{cor}
\begin{proof}
(1)  $V=St_J$ is star-shaped centered at any $c \in J$. Hence $\g^{se}_J= \chi_{\g_J}'^{-1}(St_J//'W_J)$ satisfies the assumption of Proposition~\ref{baseopensetexample}. \\
(2) Take $V=T^{se}_J$, we see that the subset $\Exp^{-1}(c^{-1} T^{se}_J)=\frt_\R \times i(St_J-c) \subset \frt_\R \times i \frt_\R = X_*(T) \otimes \C$ is convex and containing $\t_\R$, hence $G^{se}_J= \chi_{G_J}'^{-1}(T^{se}_J//'W_J)$ satisfies the assumption of Proposition~\ref{levigroup}.

\end{proof}

\subsection{The main theorem}
\begin{thm}
	\label{groupcharactersheaf}
	There are equivalences of $\oo$-categories:
	$$ \xymatrix{ Sh_\N(G/G) \ar[r]^-{\sim}  &   \lim_{J \in \mathscr F_C} Sh_\N(\g_J/G_J) } $$
		$$ \xymatrix{   Sh_\N(G_E)  \ar[r]^-{\sim}  &   \lim_{J \in \mathscr F_C} Sh_\N(G_J/G_J) } $$
		where in the limit, the arrows are parabolic restriction functors.
\end{thm}
\begin{proof}
	Define $\mathscr{F_C} \to \mathscr{P}^{op}$, via $\{J_0 \to J_1 \} \mapsto \{P^{J_0}_{J_1}: (G_{J_1},\g^{se}_{J_1}) \to (G_{J_0}, \g^{se}_{J_0}) \}$. Then 
	\begin{align*} 
	Sh_\N(G/G) & \simeq \lim_{J \in \mathscr{F}_C} Sh_\N(\frg_J^{se}/'G_J) \qquad \text{by Propsition~\ref{sesheaves} (2)}  \\ 
	& \simeq   \lim_{J \in \mathscr{F}_C} Sh_\N(\frg_J/'G_J)    \qquad \text{by Proposition~\ref{probagation} and Corollary~\ref{se_is_retractible} (1)}  \\
	& \simeq   \lim_{J \in \mathscr{F}_C} Sh_\N(\frg_J/G_J)    \qquad \text{by Propsition~\ref{untwisting}.} 
	\end{align*}
Define	$\mathscr{F}_C \to \widetilde{\mathscr{P}}^{op}$, via $\{J_0 \to J_1 \} \mapsto \{P^{J_0}_{J_1}: (G_{J_1},G^{se}_{J_1}) \to (G_{J_0}, G^{se}_{J_0}) \}$. Then
\begin{align*} 
Sh_\N(G_E) & \simeq \lim_{J \in \mathscr{F}_C} Sh_\N(G_J^{se}/'G_J) \qquad \text{by Propsition~\ref{colimitdiagram2} (2)} \\ 
& \simeq   \lim_{J \in \mathscr{F}_C} Sh_\N(G_J/'G_J)    \qquad \text{by Proposition~\ref{group retraction} (1) and Corollary~\ref{se_is_retractible} (2)} \\
& \simeq   \lim_{J \in \mathscr{F}_C} Sh_\N(G_J/G_J)    \qquad \text{by Propsition~\ref{group retraction} (2).} 
\end{align*}
\end{proof}

\begin{eg} For $G=SL_2$, identifying $X_*(T) \subset \t_\R$ as $\Z \subset \R$, and take the alcove $C= (0,1/2) \subset \t_\R$. We have  \\
$Sh_{\mathcal{N}}(G/G)= 
\lim{ 
\xymatrixcolsep{-0.05in}
\xymatrix{
 &  Sh_{\mathcal{N}}(\mathfrak{t}/T) &  \\
Sh_\mathcal{N}(\mathfrak{g}_0/G_0) \ar[ru]^{\Res_{\mfp^{0}_{(0,1/2)}}} & & Sh_\mathcal{N}(\mathfrak{g}_{1/2}/G_{1/2}) \ar[lu]_{\Res_{\mfp^{1/2}_{(0,1/2)}}}
} } $ \\
If the coefficient $k=\C$, the above diagram can be explicitly calculated as:\\
$Sh_\N(G/G, \C)=\lim 
\xymatrixcolsep{-0.05in}
\xymatrix{
 &  \Vect &  \\
\Vect \oplus {\C}[\mathbb{Z}/2] \modu \ar[ru]^{0 \oplus U} & & \Vect \oplus  {\C}[\mathbb{Z}/2] \modu \ar[lu]_{0 \oplus U}
}   \\
=\Vect \oplus \Vect \oplus {\C}[(\Z/2 * \Z/2)] \modu=\Vect \oplus \Vect \oplus {\C}[W_{\aff}] \modu$\\
Where $U:\C[\Z/2] \modu \rightarrow \Vect$ is the forgetful (restriction) functor.  The two $\Vect$ are generated by two cuspidal sheaves, and ${\C}[W_{\aff}] \modu$ corresponds (see \cite{BZN2}) to sheaves coming from Grothendieck-Springer correspondence: 
$$ \xymatrix{  T/T  & B/B  \ar[r]  \ar[l]   &   G/G }.$$
\end{eg}

\section{Uniformization over $\mathcal{M}_{1,1}$ and parallel transport}

Denote $\mathcal{M}_g$ the moduli stack of genus $g$ Riemann surface. Denote $\Bun_{G,g}$ be the stack classifying pairs $(C,P)$, for $C$ a Riemann surface of genus $g$, and $P$ a $G$-bundles on $C$. We have a natural map $\Bun_{G,g} \to \mathcal{M}_g$. 

Denote by $\pi: \Bun_{B,g} \to \Bun_{G,g}$, and $\N_g:= \pi_*(\mathbf{0}_{\Bun_{B,g}})$. We expect $\N_g$ defines parallel transport of the automorphic category $Sh_\N(\Bun_G(C))$ over $\M_g$. More precisely, for any $D \to \M_g$ \'etale map, put $\Bun_{G,D}:= \Bun_{G,g} \times_{\M_g} D$, we expect:

\begin{conj}
Let $i: \{C\} \hookrightarrow D$ be a point. Then functor
 $ i^*: Sh_{\N_g}(\Bun_{G,D})  \to Sh(\Bun_G(C))$ takes value in $Sh_\N(\Bun_G(C))$. Moreover, if $D$ is contractible, then the  induced functor
 $$
 \xymatrix{
  Sh_{\N_g}(\Bun_{G,D})  \ar[r]^-{\sim} & Sh_\N(\Bun_G(C)).
}
  $$ 
   is an equivalence.
\end{conj}

We shall prove a similar statement for elliptic curves. Denote $\M_{1,1}$ the moduli stack of elliptic curves. Denote by $\Bun_{G}:=\Bun_{G,1,1}$ the stack classifying $(E,P)$, for $E$ an elliptic curve and $P$ a $G$-bundle on $E$. And $\Bun^{0,ss}_{G,1,1} \subset \Bun_{G,1,1}$ the subset of degree $0$ semistable bundles. 
Denote $\N^{ss}_{1,1}:=\pi_*(\mathbf{0}_{\Bun^{0,ss}_{B,1,1}})$, 
	for $\pi:  \Bun^{0,ss}_{B,1,1} \to  \Bun^{0,ss}_{G,1,1}$. Let $\mathcal{H} \to \M_{1,1}$ be the universal cover by upper half plane. Put $\Bun^{0,ss}_{G,\calH}:= \Bun^{0,ss}_{G,1,1} \times_{\M_{1,1}} \calH$, and $\N^{ss}_\calH:=\N^{ss}_{1,1} \times_{\M_{1,1}} \calH$.

Choose  $S \subset \t$  a $W_\aff$-invariant subset, and for each $a \in S$, choose $V_{\R,a} \subset \t^{et}_{\R,a}$ open subset, so that $V_{\R,w(a)}=w(V_{\R,a})$ for all $w \in W_\aff$. For any $\tau \in \calH$, put $V_{\tau,a}:=X_*(T) \otimes \R/\Z \times V_{\R,a}\cdot \tau \subset T$. Put $V_{\calH,a}:=\cup_{\tau \in \calH} V_{\tau,a} \subset T \times \calH =:T_{\calH}$. Denote by $U_{\calH,a} \subset G_a \times \calH=:G_{\calH,a}$ consist of those elements with eigenvalues lying in $V_{\calH,a}//W_a$. For $\bfa=(a_1,...,a_n),$ put $U_\bfa = \cap_{a \in \bfa} U_a$, we have charts $p_\bfa:  U_{\calH,\bfa}/'G_{\calH,a} \to \Bun^{0,ss}_{G,\calH}.$ These charts give the uniformization over $\M_{1,1}$:

\begin{thm}
	\label{universaluniformization}
	There is an isomorphism of stacks:
	 $$  \colim_{\int^{\Delta^{op}} S^\bullet_{\dot{W}_\aff}} U_{\calH,\bfa}/'G_{\calH,a} \simeq \Bun^{0,ss}_{G,\calH}. $$
\end{thm}

\begin{prop}
	\label{universalnilpotent}
	$p_\bfa^*(\N^{ss}_\calH) =   \N  \times \mathbf{0}_\calH $ as substacks of $T^*(U_{\calH,\bfa}/'G_{\calH,\bfa}) \subset  T^*(G_{\bfa}/'G_{\bfa}) \times T^*\calH$
\end{prop}
\begin{proof}
	Pick $V_{\R,a}$ small, so that $V_{\R,a}//W_a \to \t_\R//{W_\aff}$ is an open embedding. Put $B_{\bfa}:= G_{\bfa} \cap LB, B_{\calH,\bfa}:= B_{\bfa} \times \calH$ and $U^B_{\calH,\bfa}= U_{\calH,\bfa} \cap B_{\calH,\bfa} $  We have a commutative diagram
	$$\xymatrix{   B_{\calH,\bfa}/'B_{\calH,\bfa} \ar[d] &  U^B_{\calH,\bfa}/'B_{\calH,\bfa}  \ar[l] \ar[r] \ar[d] &  \Bun^{0,ss}_{B,\calH} \ar[d] \\ 
		 G_{\calH,\bfa}/'G_{\calH,\bfa} \ar[d]  &  U_{\calH,\bfa}/'G_{\calH,\bfa}  \ar[l] \ar[r] \ar[d]  &  \Bun^{0,ss}_{G,\calH} \ar[d]  \\
		     T_{\calH}//W_\bfa         &     V_{\calH,\bfa} //W_\bfa    \ar[l]  \ar[r]      &    T_{\calH}//W_{\aff}
 		}$$
 The horizontal arrows are open embeddings, and all squares are cartesian. Hence the claim follows.
\end{proof}

\begin{prop}
	Let $D \to \M_{1,1}$ be an \'etale map, and $i:\{E\} \hookrightarrow D$ a point. Then the functor
	$ i^*: Sh_{\N^{ss}_{1,1}}(\Bun^{0,ss}_{G,D})  \to Sh(\Bun^{0,ss}_G(E))$ takes value in $Sh_\N(\Bun_G(E))$. Moreover, if $D$ is contractible, then the  induced functor
	$$
	\xymatrix{
	 Sh_{\N^{ss}_{1,1}}(\Bun^{0,ss}_{G,D})   \ar[r]^-{\sim} &  Sh_\N(\Bun^{0,ss}_G(E))
	}
	$$ 
	is an equivalence.
\end{prop}
\begin{proof}
 	Assume $D$ is small and hence can be lifted to $\mathcal{H}$. Then by Theorem~\ref{universaluniformization} and Proposition~\ref{universalnilpotent}. We see that $i^*$ is given by the colimit over $\int^{\Delta^{op}} S^\bullet_{\dot{W}_\aff}$ of the restriction functors
$$ i^*_\bfa: Sh_{ \N  \times \mathbf{0}_D } ( U_{D,\bfa}/'G_{D,\bfa} ) \to Sh(U_\bfa/'G_{\bfa}). $$

The notation involving $D$ is self explanatory. We can choose $V_{\R,a}$ so that $U_{D,\bfa}, U_\bfa$ are retractable w.r.t the nilpotent cones, e.g the ones as in section~\ref{reductivecharts}. We need to show the functor  $i^*_\bfa$ maps isomorphically onto $Sh_\N(U_\bfa/'G_{\bfa})$. To see this, we have a commutative diagram
$$\xymatrix{   Sh_{ \N  \times\mathbf{0}_D } (G_{D,\bfa}/G_{D,\bfa})  \ar[r] \ar[d]^{\simeq}  &    Sh(G_\bfa/G_\bfa) \ar[d]^{\simeq} \\
		    Sh_{ \N  \times \mathbf{0}_D } (G_{D,\bfa}/'G_{D,\bfa})   \ar[r] \ar[d]^{\simeq}    &     Sh(G_\bfa/'G_\bfa)  \ar[d]  \\
		  Sh_{ \N  \times \mathbf{0}_D } (U_{D,\bfa}/'G_{D,\bfa})   \ar[r]^-{ i^*_\bfa}     &     Sh(U_\bfa/'G_\bfa)  
}$$
where the top two vertical arrows are untwisting by some element $a \in \bfa$. And bottom two vertical arrows are restriction to open substacks, where the left arrow is an equivalence since $U_{D,\bfa}$ is retractable w.r.t $\mathbf{0}_D \times \N$. The top horizontal arrow can be identified as  $Sh_{ \N  \times \mathbf{0}_D } (G_{\bfa}/G_{\bfa} \times D )  \to  Sh(G_\bfa/G_\bfa)$, which maps isomorphically onto $Sh_\N(G_\bfa/G_\bfa)$. Hence we see $i^*_\bfa$ also maps isomorphically onto $Sh_\N(U_\bfa/'G_\bfa)$.
\end{proof}

Define $\mathscr{A}^{ss}$ a sheaf of category over $\M_{1,1}$ via $ D \mapsto  Sh_{\N^{ss}_{1,1}}(\Bun^{0,ss}_{G,D}) $.

\begin{cor}
	\label{locallyconstant}
	$\mathscr{A}^{ss}$ is a locally constant sheaf  of $\oo$-categories over $\M_{1,1}$, whose stalk at $E \in \M_{1,1}$ is $Sh_\N(G_E)$.
\end{cor}

\section{Remarks}

In this section, we make some comments about several related topics. 

\subsection{Stratification of compact group} Lusztig stratification is commonly used in the study of geometry of $G/G$. We explain its relation with our charts when restricted to a maximal compact subgroup. \\

Let $G_c$ be a simple and simply-connected compact Lie group, and $T_c \subset G_c$ a maximal compact torus. Choose an alcove $C$ in $X_*(T_c) \otimes \R$, we define open cover $\mathscr C$ of $G_c$ by $\mathscr C=\{G_{c,J}^{se}: J  \in \text{vertices of }C \}$. Identifying the cover with its image, and the cover does not depend on the choice of $T_c$ and $C$, hence the cover is intrinsic associated to $G_c$. Denote by $\mathscr S$ the finest stratification of $G_c$ generated by $\mathscr C$ via taking complement and intersection. Then $\mathscr{C}$ can also be recovered from $\mathscr{S}$: a chart in $\mathscr C$ is the union of all strata whose closure containing a fixed closed stratum in $\mathscr S$. It is clear from the definition that $\mathscr C$ and $\mathscr S$ are conjugation invariant. The stratification $\mathscr S$ can also be described more explicitly:

\begin{prop}
$\mathscr S= \{ ^{G_c} (\Exp (J)): J \in \text{faces of C}  \}$.
\end{prop}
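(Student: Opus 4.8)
The plan is to identify the two sides stratum by stratum, using the description of the charts $G_{c,J}^{se}$ and the group-theoretic facts already established. First I would fix a maximal compact torus $T_c\subset G_c$ and an alcove $C$ in $X_*(T_c)\otimes\mathbb R$, so that $\Exp:X_*(T_c)\otimes\mathbb R\to T_c$ (the restriction to the compact torus of the exponential, i.e. $a\mapsto\exp(2\pi i a)$) identifies the wall stratification on $X_*(T_c)\otimes\mathbb R$ with a conjugation-invariant stratification of $T_c$; here the relevant reflection group is the affine Weyl group $W_\aff$ acting on $X_*(T_c)\otimes\mathbb R$, and by Theorem~\ref{reflectiongroup}(2) the closure $\overline C$ is a fundamental domain. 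Thus every conjugacy class of elements of $G_c$ meets $\Exp(\overline C)$, and the subsets $^{G_c}(\Exp(J))$, for $J$ running over the faces of $C$, partition $G_c$: the point here is that $w\in W_\aff$ carries $\Exp(J)$ into $\Exp(J')$ (for faces $J,J'$ of $C$) only if $J=J'$, which is exactly the argument in the proof of Proposition~\ref{opencolimit} (equivalently, in Proposition~\ref{colimtorus}), applied to the compact torus rather than to $\t_{\R\tau}$. This gives the set-theoretic decomposition $G_c=\coprod_{J\in\mathscr F_C}{}^{G_c}(\Exp(J))$.

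Next I would match the charts. By definition $G_{c,J}^{se}=\chi_J^{-1}(T_{c,J}^{se}/\!/W_J)$, where $T_{c,J}^{se}\subset T_c$ is $\Exp(St_J)$ ($St_J$ the star of $J$, Definition~\ref{facet}(4)); since $St_J=\bigcup_{J\subset\overline{J'}}J'$, we get $G_{c,J}^{se}=\bigcup_{J\subset\overline{J'}}{}^{G_c}(\Exp(J'))$. In particular each chart is a union of pieces $^{G_c}(\Exp(J'))$, so the collection $\{{}^{G_c}(\Exp(J')):J'\in\mathscr F_C\}$ is a refinement of $\mathscr C$, and conversely: if two faces $J_1,J_2$ both lie in the closure of $J'$ then $^{G_c}(\Exp(J'))\subset G_{c,J_1}^{se}\cap G_{c,J_2}^{se}$, so the pieces are as coarse as possible subject to refining $\mathscr C$; taking complements one recovers all of them. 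This shows $\mathscr S$, the finest stratification generated by $\mathscr C$ under complement and intersection, has the $^{G_c}(\Exp(J))$ among its strata, and the local-finiteness/combinatorial structure of the face poset $\mathscr F_C$ shows nothing finer is forced. I would then check the frontier condition — $\overline{^{G_c}(\Exp(J'))}\supset{}^{G_c}(\Exp(J))$ iff $J\subset\overline{J'}$ — which follows from continuity of $\Exp$, compactness of $G_c$ (so $^{G_c}(-)$ commutes with closure), and the fact that $\overline{\Exp(J')}\cap\overline C=\bigcup_{J\subset\overline{J'}}\Exp(J)$.

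The remaining point is that each $^{G_c}(\Exp(J))$ is genuinely a single stratum of a stratification, i.e. a locally closed submanifold with the right local structure; here I would invoke Corollary~\ref{automorphism} (and Proposition~\ref{connected}, valid since $G$ is simply-connected) to see that along $^{G_c}(\Exp(J))$ the centralizer type is constant, equal to the compact form of $G_J$, so the piece is a homogeneous fiber bundle over the local transversal $\Exp(J)$ and hence a smooth locally closed submanifold of constant dimension. Finally I would note conjugation-invariance of both sides is automatic from the construction. The main obstacle I anticipate is the bookkeeping in the second paragraph: proving that the Boolean operations (complement, intersection) applied to $\mathscr C$ produce \emph{exactly} the $^{G_c}(\Exp(J))$ and nothing strictly finer. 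This is a purely combinatorial statement about the poset of faces of a simplex together with the stars $St_J$, and the cleanest route is probably to show directly that for $g\in G_c$ the set of charts in $\mathscr C$ containing $g$ determines, and is determined by, the unique face $J$ with $g\in{}^{G_c}(\Exp(J))$ — namely $g\in G_{c,J_0}^{se}$ iff $J_0$ is a vertex of some $J'$ with $J\subset\overline{J'}$, iff $J_0\subset\overline{J}$ — after which the identification of stratifications is formal.
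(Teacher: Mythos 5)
The paper states this proposition without a proof, so your proposal has to stand on its own. The overall plan is sound: establish the set-theoretic decomposition $G_c=\coprod_{J\in\mathscr F_C}{}^{G_c}(\Exp(J))$ via the fundamental domain $\overline C$; express each chart as $G_{c,J_0}^{se}=\bigcup_{J'\in\mathscr F_C,\,J_0\subset\overline{J'}}{}^{G_c}(\Exp(J'))$; observe that membership of $g$ in the charts determines and is determined by the unique $J$ with $g\in{}^{G_c}(\Exp(J))$; and conclude by noting faces of a simplex are determined by their vertex sets. This is the right route and it closes the argument.

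There is, however, an error in the last paragraph. The chain ``$g\in G_{c,J_0}^{se}$ iff $J_0$ is a vertex of some $J'$ with $J\subset\overline{J'}$, iff $J_0\subset\overline{J}$'' is not a chain of equivalences: the middle condition is strictly weaker than the two outer ones. For instance, in a $2$-simplex $C$ with vertices $v_0,v_1,v_2$, take $J_0=\{v_0\}$, $J=\{v_1\}$, $J'=C$; then $J_0$ is a vertex of $J'$ and $J\subset\overline{J'}$, but $J_0\not\subset\overline{J}$ and $g\notin G_{c,v_0}^{se}$. The correct bridge is simply ``$J_0$ is a vertex of $J$'', which \emph{is} equivalent to $J_0\subset\overline{J}$. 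With that replacement the argument is correct: $\{J_0:g\in G_{c,J_0}^{se}\}$ is exactly the vertex set of $J(g)$, so the atoms of the Boolean algebra generated by $\mathscr C$ are the nonempty sets $^{G_c}(\Exp(J))$.

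One smaller point worth making explicit: when you pass from $G_{c,J}^{se}={}^{G_c}\Exp(St_J)=\bigcup_{J\subset\overline{J'}}{}^{G_c}(\Exp(J'))$ (union over \emph{all} facets $J'$ of $\t_\R$) to a union indexed by faces of $C$, you are using the nontrivial fact that for $J,J''\in\mathscr F_C$, some $W_\aff$-translate of $J''$ lies in $St_J$ if and only if $J\subset\overline{J''}$. This follows from the argument in Proposition~\ref{opencolimit} (choose a chamber $C'$ with $J\subset\overline{C'}$ adjacent to the translate, act by $W_J$ to bring $C'$ back to $C$), which you cite in the first paragraph but should also invoke here, since it is the step that makes the re-indexing by $\mathscr F_C$ legitimate.
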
 

Now let $G$ be the complexification of $G_c$, then $G$ has a Lusztig stratification $\mathscr L$  by conjugation invariant subvarieties \cite{Lu7}. Let $\mathscr L_c$ denote the induced stratification on $G_c$. Note that even each stratum in $\mathscr L$ is connected, its intersection with $G_c$ may not be connected.

\begin{prop}
Strata in $\mathscr S$ are precisely the connected components of strata in $\mathscr L_c$.
\end{prop}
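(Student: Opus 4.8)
The plan is to compare the two stratifications cell by cell, using the explicit description of $\mathscr S$ given in the previous proposition together with the known description of Lusztig strata. Recall that a Lusztig stratum of $G$ attached to a Levi datum is an irreducible locally closed subvariety, and that two semisimple elements $s,s'$ of $T$ lie in the same Lusztig stratum precisely when their centralizers $G_s,G_{s'}$ are conjugate in $G$ (more precisely, when the associated ``types'', i.e.\ the $W$-conjugacy classes of the subsystems $\Phi_s\subset\Phi$ together with the relevant quasi-semisimple data, agree); the non-semisimple elements are then distributed among strata by the Jordan decomposition, the stratum of $g=su$ being determined by that of $s$. First I would recall that, for $G$ simply-connected, the centralizer $G_s$ of a semisimple element is connected (Corollary after Proposition~\ref{maximalgs}, and Theorem~\ref{covergroup}), so no component-group subtleties arise on the $G$ side, and the Lusztig type of $s\in T_c$ is recorded exactly by the conjugacy class of the parabolic-facet subgroup $G_J$ where $J$ is the face of $C$ with $\Exp(\mathrm{int}\,J)\ni s$ (up to the $W_{\aff}$-action, which is absorbed into $G$-conjugacy).

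Next I would show the set-theoretic equality of the two partitions after passing to connected components. Fix a face $J$ of $C$ and consider $S_J := {}^{G_c}\bigl(\Exp(\mathrm{relint}\,J)\bigr)$, a stratum of $\mathscr S$. The semisimple part of any $g\in S_J$ is conjugate into $\Exp(\mathrm{relint}\,J)$, hence has centralizer conjugate to $G_{c,J}$, hence $g$ lies in a fixed Lusztig stratum $\mathscr L(J)$ of $G$ restricted to $G_c$; so $S_J$ is contained in a single member of $\mathscr L_c$. Conversely, given a member $X$ of $\mathscr L_c$, any $x\in X$ has semisimple part conjugate (in $G_c$) into $\bar C$, landing in the relative interior of a unique face, and the $W_{\aff}$-orbit of that face together with the element meeting $\bar C$ is well-defined; the resulting assignment $X\mapsto J$ shows $X$ is a union of sets of the form $S_{J'}$ with $J'$ in the $W_{\aff}$-orbit of $J$ — but all such $S_{J'}$ coincide (they are $G_c$-saturations of $W_{\aff}$-equivalent facets, hence equal subsets of $G_c$), so $X$ is a union of the $S_J$ but these are exactly the $\mathscr S$-strata. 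Thus each $\mathscr L_c$-stratum is a disjoint union of $\mathscr S$-strata.

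To finish I must check that each $\mathscr S$-stratum $S_J$ is in fact a \emph{connected component} of the $\mathscr L_c$-stratum $X$ containing it, not a proper sub-union. For this I would argue connectedness of $S_J$ directly: $\mathrm{relint}\,J$ is connected (it is an open simplex), $\Exp$ is continuous, and $G_c$ is connected, so $S_J$, being the image of the connected set $G_c\times\mathrm{relint}\,J$ under the continuous map $(g,a)\mapsto g\Exp(a)g^{-1}$, is connected. Then openness of $S_J$ inside $X$: both $S_J$ and $X$ are locally closed smooth, and near a point of $S_J$ the chart $G_{c,J}^{se}$ identifies a neighbourhood with (the compact form of) $\mathfrak g_{J}^{se}/G_{J}$, in which the closed stratum $\Exp(J)$ and the surrounding stratification match the local model; I would use this local model — essentially the $S^1$-gauge picture of Theorem~\ref{covergroup} and Lemma~\ref{S1: nilpotent} describing the nilpotent/type geometry — to see that $S_J$ is open in $X$. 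Two disjoint nonempty relatively open connected subsets of $X$ that together with their $G_c$-conjugates cover $X$ must be exactly the connected components. I expect the main obstacle to be precisely this last point: verifying that the $\mathscr S$-strata are \emph{open} in the ambient Lusztig stratum, i.e.\ that Lusztig's stratification does not further subdivide, which amounts to checking that the discrete invariant distinguishing $\mathscr S$-strata inside a fixed $\mathscr L_c$-stratum is exactly $\pi_0$; this is where one genuinely uses that passing from $\t_{\R}$ to $T_c$ via $\Exp$ only collapses things along the $W_{\aff}$-action and introduces no extra identifications within a single facet, a fact that follows from the injectivity of $\Exp$ on $\t_{\R\tau}$ (here the relevant real form) combined with the transitivity/connectedness statements already proved.
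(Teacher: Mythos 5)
Your overall plan --- partition each Lusztig stratum of $G_c$ into $\mathscr S$-strata and then show each piece is open --- is the right one, but there is a genuine error in the converse step. You assert that the face $J$ of $C$ attached to a Lusztig stratum $X$ is well-defined up to the $W_{\aff}$-action, so that $X$ is a union of sets $S_{J'}={}^{G_c}\Exp(J')$ with $J'$ ranging over a single $W_{\aff}$-orbit, all of which therefore coincide. This is false, and the $SU(3)$ example directly following the proposition is a counterexample: $L_{(2,1)}\cap G_c$ decomposes as $S_0\sqcup S_1\sqcup S_2$, coming from the three distinct edges of the alcove $C$, and no two distinct faces of $\overline{C}$ are ever $W_{\aff}$-equivalent, since $\overline{C}$ is a fundamental domain. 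The actual phenomenon is that distinct, non-$W_{\aff}$-equivalent faces $J,J'$ of $C$ can yield $G$-conjugate subgroups $G_J,G_{J'}$; the $G$-conjugacy class of the centralizer determines the Lusztig stratum but not the face. What remains true, and is all you need at this stage, is the bare statement: every $g\in G_c$ is semisimple, hence conjugate into $\Exp$ of a unique open face of $\overline{C}$, so the $S_J$ partition $G_c$; since each $S_J$ sits in a single $\mathscr L_c$-stratum (your forward direction), each $\mathscr L_c$-stratum is a finite disjoint union of $\mathscr S$-strata, in general for mutually non-$W_{\aff}$-equivalent faces.

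The openness step is only gestured at and as written does not supply an argument; a direct dimension count closes it and avoids invoking the local charts. If $S_{J_1},\dots,S_{J_r}$ are the $\mathscr S$-strata inside a fixed $\mathscr L_c$-stratum $X$, they all have the same dimension $\dim G_c-\dim G_{c,J_i}+\dim J_i$: the $G_{J_i}$ are $G$-conjugate, so $\dim G_{c,J_i}$ is constant, and $\dim J_i=\dim(X_*(T_c)\otimes\R)-\rk\, W_{J_i}$, where the Coxeter rank $\rk\, W_{J_i}=\rk\, W(G_{J_i},T)$ is likewise a conjugation invariant. Since $G_c$ is compact, $\overline{S_{J'}}={}^{G_c}\Exp(\overline{J'})$ is the union of $S_{J''}$ over all faces $J''$ of $\overline{J'}$, and every such face with $J''\neq J'$ has $\dim J''<\dim J'$ and $\Phi_{J''}\supsetneq\Phi_{J'}$, hence $\dim S_{J''}<\dim S_{J'}$. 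Therefore distinct $S_{J_i}$ inside $X$ cannot lie in each other's closure, so each is open (hence open and closed) in $X$; combined with the connectedness of $S_{J_i}$ that you already established, they are precisely the connected components.
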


\begin{eg}
For $G_c= SU(3), G=SL(3,\C)$.  $ \mathscr L=\{\text{(connected components of) } L_\lambda: \lambda \text{ a partition of } 3. \}$, where $L_\lambda = \{g \in G: $\textup{ the semisimple part }$ g^{ss} \text{ has eigenvalue of type } \lambda.  \} $. The stratum $L_{(2,1)}$ is connected. However $L_{(2,1)} \cap G_c = \coprod_{k=0,1,2} S_k$ has three connected component, where $S_k=\{g \in G_c: g \text{ has eigenvalues }  \{a, a, a^{-2} \},  a = e^{2 \pi i \theta/3}, \text{ and } \theta \in (k,k+1) \}$. 
And $\mathscr S=\{\{I\}, \{e^{2\pi i/3} I \}, \{e^{4\pi i/3} I\}, S_0,S_1,S_2, G_c^{reg}\}$
consists of $7$ strata.
\end{eg}

The closed strata in $\mathscr S$ (or $\mathscr L_c$) are precisely the isolated conjugacy classes in $G_c$, they are in bijection with the vertices of $C$. For a vertex $v$, the corresponding conjugacy class is isomorphic to $G/C_G(\Exp(v))$.   For type A, the isolated conjugacy classes are central elements, hence discrete. For other types, there exists isolated class corresponds to a non-special vertex (i.e those such that $C_G(\Exp(v)) \neq G )$, and is therefore positive dimensional.

\subsection{Nonabelian Weierstrass $\wp$-function}

We have understood the nonabelian analog of $E=\C/(\Z+\Z \tau)$ and $E=\C^*/q^\Z$, we also describe the nonabelian analog of view $E$ as of a cubic equation $y^2=4x^3-g_2x-g_3$ birationally. 

Let $E=\C/\Lambda,$ where $\Lambda=\Z + \Z\tau$, recall the $\wp$-function is defined as a $\Lambda$-invariant meromorphic function on $\C$:
$$ \wp(z)= \frac{1}{z^2}+\sum_{\omega \in \Lambda -\{0\}} (\frac{1}{(z + \omega)^2}-\frac{1}{\omega^2})  $$

\begin{defn} The nonabelian $\wp$-function and its derivative is defined as the following meromorphic functions $\mathfrak{gl}_n \to \mathfrak{gl}_n$:
$$ \wp(Z)= \frac{1}{Z^2}+\sum_{\omega \in \Lambda -\{0\}} (\frac{1}{(Z + \omega I)^2}-\frac{1}{(\omega I)^2 })  $$
$$\wp'(Z)= -2\sum_{\omega \in \Lambda} \frac{1}{(Z + \omega I)^3}$$
\end{defn}

We shall consider $G=GL_n$, recall that $GL_{n,E,0}$ is representable by an smooth algebraic variety, and there is a map $p: GL_n \to GL_{n,E,0}$. For $n=1$, $GL_{1,E,0} = \Pic^0(E) \simeq E$, and the map $p$ is identified as $\C^* \to \C^*/e^{2\pi i \tau \Z}=E$.

\begin{thm} 
\begin{enumerate}
\item Under the maps $\xymatrix{\gl_n \ar[r]^-{\Exp}  & GL_n  \ar[r]^-{p} & GL_{n,E,0} } $, the function $\wp(Z)$ and $\wp'(Z)$ descent to a rational function on $GL_{n,E,0}$.
\item The map $\xymatrix{(\wp, \wp'): GL_{n,E,0} \ar[r] & \mathfrak{gl}_n \times     \mathfrak{gl}_n}$, defines a birational isomorphism between $GL_{n,E,0}$ and the subvariety:
$$\{(X,Y) \in \mathfrak{gl}_n \times \mathfrak{gl}_n : [X,Y]=0, \textup{ and } Y^2=4X^3-g_2 X - g_3\},$$ where $g_2=60 \sum_{\omega \in \Lambda -\{0\}}\omega^{-4}$ and $g_3=140 \sum_{\omega \in \Lambda -\{0\}}\omega^{-6}$.
\end{enumerate}

\begin{rmk}
It is more complicated to (partially) compactify the image of the above rational map to give an actual isomorphism. So far we have only use a single chart, and the various other charts may be useful for this purpose.
\end{rmk}

\end{thm}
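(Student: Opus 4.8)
The plan is to treat the nonabelian Weierstrass $\wp$-function as a family version of the classical one, parametrized by simultaneously commuting semisimple elements. First I would observe that the defining series for $\wp(Z)$ and $\wp'(Z)$ converge (locally uniformly, away from the poles) on the open dense locus of $\mathfrak{gl}_n$ consisting of diagonalizable matrices with no eigenvalue in $\Lambda$, by diagonalizing: if $Z = g\,\mathrm{diag}(z_1,\dots,z_n)\,g^{-1}$ then termwise $\wp(Z) = g\,\mathrm{diag}(\wp(z_1),\dots,\wp(z_n))\,g^{-1}$, and likewise for $\wp'$. This reduces the convergence and the algebraic identity $\wp'^2 = 4\wp^3 - g_2\wp - g_3$ to the classical scalar statements applied eigenvalue-by-eigenvalue, together with the observation that $\wp(Z)$ and $\wp'(Z)$ are polynomials in $Z$ (and $(Z+\omega I)^{-1}$) hence automatically commute with $Z$ and with each other. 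So part (2)'s target subvariety $\{(X,Y):[X,Y]=0,\ Y^2 = 4X^3-g_2X-g_3\}$ is manifestly where the image lands.

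For part (1), the key point is $\Lambda$-periodicity in the appropriate nonabelian sense: $\wp(Z + \omega I) = \wp(Z)$ for $\omega \in \Lambda$ because adding a central element $\omega I$ just shifts the summation index, exactly as in the scalar case; and $\wp$ is conjugation-equivariant, $\wp(gZg^{-1}) = g\,\wp(Z)\,g^{-1}$, since the series is built from conjugation-equivariant operations. Now I would use the exponential uniformization developed in the paper: by Theorem~\ref{covergroup} (or rather its $GL_n$, elliptic incarnation via Theorem~\ref{gaugecoverge} and the charts of Section~\ref{etalecharts}), near a semisimple point the map $\mathfrak{gl}_n \xrightarrow{\Exp} GL_n \xrightarrow{p} GL_{n,E,0}$ is locally modeled on $\mathfrak{g}_s \to G_s \to \dots$, and the relevant identifications show that two elements $Z, Z'$ of the diagonalizable locus have $p(\Exp Z) = p(\Exp Z')$ iff their eigenvalues agree up to $W_{\mathrm{aff}}$, i.e.\ up to permutation and translation by $2\pi i \tau \mathbb{Z} + 2\pi i \mathbb{Z}$ — that is, up to the lattice $\Lambda$ after rescaling $z \mapsto 2\pi i z$ (I must be careful about the normalization: the paper's $\Exp$ is $\exp(2\pi i -)$, so the lattice $\Z + \Z\tau$ must match $\tfrac{1}{2\pi i}\log$ of $e^{2\pi i \Z}\cdot e^{2\pi i \tau \Z}$, which works out). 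Since $\wp, \wp'$ are $\Lambda$-periodic and symmetric-function-like (conjugation-invariant after composing with a class function — actually they are equivariant, not invariant, so I descend them as sections of the adjoint bundle $\mathfrak{gl}_{n,\mathcal P}$ over $GL_{n,E,0}$, or more simply note that their coefficients in any basis are $\Lambda$-periodic conjugation-equivariant hence descend), they factor through $GL_{n,E,0}$ on the dense open locus, defining rational maps there.

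Finally, for the birationality claim in (2): on the dense open locus the composite $GL_{n,E,0} \dashrightarrow \{(X,Y):\dots\}$ restricted to diagonalizable points is, eigenvalue-by-eigenvalue, the classical birational parametrization $E \dashrightarrow \{y^2 = 4x^3-g_2x-g_3\}$, which is an isomorphism of curves; the commuting-pair condition exactly records that $X$ and $Y$ are simultaneously diagonalizable with matched eigenvalue data, so the inverse is: given a commuting pair $(X,Y)$ on the cubic locus, read off the $n$ points of $E$ from the $n$ joint eigenvalues $(x_i,y_i)$, together with the conjugating $g$, and reconstruct the semistable bundle as a direct sum of line bundles twisted by $g$. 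I would package this as: both varieties have a common dense open subset isomorphic to (an open in) $\mathrm{Sym}^n(E)$-with-framing, i.e.\ $(E^{\times n}\setminus\Delta)\times_{} GL_n/T_n$ roughly, and the two rational maps are mutually inverse there. The main obstacle I expect is bookkeeping the normalization constant $2\pi i$ consistently between the paper's $\Exp = \exp(2\pi i-)$ convention, the classical lattice $\Lambda = \Z+\Z\tau$, and the pole structure of $\wp$ (the poles of $\wp(Z)$ sit over $\Exp(\Lambda)\subset GL_n$, i.e.\ over a finite set of points of $GL_{n,E,0}$ corresponding to where line-bundle summands degenerate), and secondarily verifying that the conjugation-equivariant descent really does produce a \emph{rational} (not merely meromorphic-analytic) function on the algebraic variety $GL_{n,E,0}$ — for which I would invoke GAGA together with the fact that $\wp,\wp'$ are, eigenvalue-wise, algebraic functions on the cubic curve, so their elementary symmetric combinations are regular functions on the base of the relevant finite cover, hence algebraic.
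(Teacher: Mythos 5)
The paper states this theorem without proof, so there is nothing internal to compare against; your eigenvalue-wise reduction to the classical Weierstrass relation is the right idea and the conclusion is correct.

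There is, however, a genuine gap in the descent argument for part (1). You assert that $p(\Exp Z)=p(\Exp Z')$ iff the eigenvalues of $Z$ and $Z'$ agree up to the $W_{\aff}$-action (permutation and $\Lambda$-translation). That is the criterion for the underlying bundles $\mathcal{P}_{\Exp Z}$ and $\mathcal{P}_{\Exp Z'}$ to be abstractly isomorphic, not for the framed bundles to coincide in $GL_{n,E,0}$; in effect you have described the fibers of the composite to $|GL_{n,E}|$, not to $GL_{n,E,0}$. Since $\wp$ is only conjugation-\emph{equivariant}, it genuinely would \emph{not} descend along that coarser relation, and your hedge about viewing $\wp$ as a section of the adjoint bundle is pointing at exactly this tension without resolving it. Solving the $q$-difference equation $k(qz)\,\Exp(Z)=\Exp(Z')\,k(z)$ with $k(1)=I$ via Lemma~\ref{twist: qde} gives the correct, finer relation: writing $Z=gDg^{-1}$ and $Z'=g'D'g'^{-1}$ with $D,D'$ diagonal with eigenvalues pairwise distinct modulo $\Lambda$, one has $p(\Exp Z)=p(\Exp Z')$ iff the eigenvalues of $D'$ are a $\Lambda$-translated permutation (by some $\rho$) of those of $D$ \emph{and} $g'^{-1}g$ equals that permutation matrix times a diagonal. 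It is precisely because the permutation of eigenvalues is locked to the permutation appearing in $g'^{-1}g$ that $\wp(Z)=g\,\mathrm{diag}(\wp(z_i))\,g^{-1}$ is constant along these fibers: the diagonal factor cancels against the diagonal matrix, $\Lambda$-periodicity kills the translation, and the two permutations cancel each other. So the descent works, but for a more specific reason than ``equivariant and periodic.'' With the fiber description repaired, the rest — the eigenvalue-wise cubic relation, the commuting condition, and the birational inverse by simultaneous diagonalization (well-defined modulo the residual diagonal gauge, since diagonals commute and drop out) — is sound. Your worry about a stray $2\pi i$ is unfounded: $\Exp=\exp(2\pi i\,{-})$ already carries the normalization, and the composite $\CC\to\CC^*\to E$ has kernel exactly $\Lambda=\ZZ+\ZZ\tau$, matching the lattice in the definition of $\wp$.
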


\subsection{Dependence of restriction functors on parabolic subgroups}
\label{sectiondependence}
The main results here is Proposition~\ref{dependenceparabolic}, which follows immediately from Proposition~\ref{group retraction}. However, we want to proceed to explain the problem in a more natural point of view that compatible with other previous approaches.

In the theory of finite groups,
let $f: A \hookrightarrow B$ be an inclusion of finite groups. A useful tool is the induction/restriction of characters:
\[
\xymatrix{
	\mathbb{C}[A/A] \ar@<.5ex>[r]^{f_*} & \mathbb{C}[B/B] \ar@<.5ex>[l]^{f^*} 
}
\]

Now let $G$ be a reductive Lie group, $L \subset G$ a Levi. It turns out that direct induction/restriction between $G$ and $L$ as in finite group case does not behave well. To correct it, the idea is to use an intermediate parabolic subgroup $P$. And define the parabolic induction/restriction in various context using the diagram 
$$\xymatrix{ L   &  P \ar[r]^p \ar[l]_q & G 
}$$

It's natural to ask  what is the dependence of the resulting restriction/induction on the choice of parabolic subgroups. One heuristic reason the restriction (pull back) along $f: L/L \to G/G$ does not behave well is that the map $f$ is not \'etale (nor smooth). Nevertheless, put $L^r=L^{G\textup{-reg}}$, the map $f|_{L^r} : L^r/L \to G/G$ is \'etale, so when restricted to $L^r$, the ``correct" restriction functor should agree with $f|^*_{L^r}$. And we are done if we could recover the restriction functor from its information on $L^r$. In the setting of perverse character sheaves, this is what happens, essentially as explained in \cite{Gi2}:
\begin{prop} The bottom horizontal arrow is fully faithful and the triangle is naturally commutative.
	$$\xymatrix{ \Perv_\N(G/G) \ar[d]^{\Res_{P}} \ar[dr]^{f^*|_{L^r/L}} \\
		\Perv_\N(L/L) \ar@{^{(}->}[r]  &  \Perv_\N(L^r/L) 
	}$$
	In particular, $\Res_P$ is the unique (up to canonical isomorphism) functor making the diagram commutative. 
\end{prop}

From our perspective, at the level of 1-categories, $L^r$ play the role of a retractable subset. This is possible because $H^0(L) \to H^0(L^r)$ is an isomorphism (both are connected). So two parabolic restrictions are canonical isomorphic since there is a canonical choice of retractable subset, namely the largest one $L^r$. 

However, at the level of $\infty$-categories, $L^r$ is not a retractable subset since the map $H^*(L) \to H^*(L^r)$ is not an isomorphism. (This is more obvious for Lie algebras, where $\mfl$ is contractible while $\mfl^r$ is not.) Nevertheless, we could still choose a retractable subset to get:

\begin{prop} 
	\label{dependenceparabolic}
	Let $P_1,P_2$ be two parabolic subgroup of a reductive group $G$ with the same Levi factor $L$, then there is an isomorphism of functors (depending on a choice of retractable $U$ in $L^{G\textup{-reg}}$ with respect to $\N$):
	$$\xymatrix{ \Res_{P_1} \simeq \Res_{P_2} :  Sh_\N(G/G) \ar[r] & Sh_\N(L/L)  }$$	
\end{prop}	
\begin{proof}
	Let $U \subset L^{G\textup{-reg}}$ be a retractable subset w.r.t $\N$ (we left it to the readers to show the existence of such $U$.)
	Define $i_1,i_2: \Delta^1 \to \widetilde{\mathscr{P}}$ mapping to $P_1: (L,U) \to (G,G)$ and $P_2: (L,U) \to (G,G)$ respectively. By Proposition~\ref{group retraction}, we have $Sh_\N(\alpha(i_1)) \simeq Sh_\N(\beta(i_1)) \simeq Sh_\N(\beta(i_2)) \simeq Sh_\N(\alpha(i_2))$
\end{proof}	
\begin{rmk}
The retractable subset does not always exist for general $H \subset G$ of maximal rank (other than Levi subgroups). A counterexample is that for $H=SL_2 \times SL_2 \subset Sp_4 =G$.
\end{rmk}	

Since the choice of retractable subset is not canonical, it is natural to understand the 
space of choices. This is more clear in the situation of Lie algebras. Let $\mfp_1,\mfp_2$ be two parabolic subalgebra of $\g$ with Levi $\mfl$. The space of choice of a retractable subset of $\mfl$ w.r.t $\g$ is $\mfc_{\mfl}^r:=\frc_\frl \cap \frl^r$. Indeed, for any $x \in \mfc_\mfl^r$, we could choose a small contractible and retractable $U_x$ near $x$, and for $x,y$ close enough and $U_x,U_y$ small enough, we could choose a $U_{x,y}$ containing $U_x, U_y$. Hence we have constructed
\begin{prop}
	Regard $\mfc^r_{\mfl}$ as an $\infty$-groupoid. There is a morphism of $\oo$-groupoids:
	$$ \xymatrix{\mfc^r_{\mfl} \ar[r] & [\Res_{\mfp_1}, \Res_{\mfp_2}], }$$
	where the right hand side is the $\oo$-groupoid of natural isomorphisms between $\Res_{\mfp_1}$ and $\Res_{\mfp_2}$.
\end{prop}

\begin{rmk}
	\begin{enumerate}
		\item Pick any $x \in \mfc_\mfl^r$, it gives Lie algebra version of Proposition~\ref{dependenceparabolic}.
		\item Under Fourier transform, for orbital sheaves, such morphism is constructed in \cite{Mi} using nearby cycle functor of the family given by characteristic polynomial map. Note that the same choice $\mfc_\mfl^r$ is implicit in the proof.
	\end{enumerate}
\end{rmk}

The same approach does not apply to the elliptic situation, we don't know yet if the parabolic restriction for nilpotent sheaves on $G_E$ are isomorphic for different choice of parabolics. The problem is that $E$ is compact, restricting to any (proper) open subset will miss the top cell, hence there is no retractable open subset in this case. This is contrary to the case for $\C^*$ or $\C$ where one could restrict to a smaller open subset where the relavant maps behave well while still retains the topology. We will understand this question for $E$ in a future paper via a different method.


\begin{appendices}

\section{Analytic stacks} \label{analytic stacks}

Denote by $\mathscr{S}$ the $\oo$-category of topological spaces, $\mathscr{C}\textup{plx}$  the site of complex spaces with classical topology, a \textit{prestack} is a presheaf $\mathscr{Y}:\mathscr{C}\textup{plx}^{op} \to \mathscr{S}$, and a \textit{stack} is a sheaf on $\mathscr{C}\textup{plx}$. Denote by 
   $\Stk$ the $\oo$-topoi of stacks.  We view $\Cplx \subset \Stk$ via the Yoneda embedding. A morphism $\mathscr{Y}' \to \mathscr{Y}$ in $\Stk$ is \textit{representable} if for any $X \to \mathscr{Y}, X \in \Cplx$, the fiber product $X \times_{\mathscr{Y}} \mathscr{Y}'$ is representable (by a complex spaces). Let $P$ be a property of morphism in  $\Cplx$, which is stable under base change, we say a representable morphism $\mathscr{Y}' \to \mathscr{Y}$ in $\Stk$ satisfying $P$ if for any $X \to \mathscr{Y}, X \in \Cplx$, the base change map $X \times_{\mathscr{Y}} \mathscr{Y}' \to X$ satisfies property $P$. Such properties include being surjective, \'etale (:=locally biholomorphic), smooth, closed embedding, open embedding, open dense embedding, isomorphism.
\begin{defn}
	A stack $\mathscr{Y}$ is an \textit{analytic stack} if 
	\begin{enumerate}
		\item $\mathscr{Y}(S)$ is a (1-)groupoid, for all $S$.
		\item The diagonal map $\mathscr{Y} \to \mathscr{Y} \times \mathscr{Y}$ is representable.
		\item There exist $Z \in \Cplx$, and $f:Z \to \mathscr{Y}$ (which is automatically representable by (2)) smooth and surjective.
	\end{enumerate}
	We shall call the pair $(Z,f)$ an atlas of $F$.
\end{defn}

\begin{defn}
$f: \mathscr{X} \rightarrow \mathscr{Y}$ is \textit{generically open} if there is $\mathscr{U} \subset \mathscr{X}$ open dense embedding, 
such that $f|_\mathscr{U}: \mathscr{U} \rightarrow \mathscr{Y}$ is an open embedding. 
\end{defn}

\begin{lem}
\label{openembedding}
An \'etale and generically open morphism of analytic stacks is open embedding. 
\end{lem}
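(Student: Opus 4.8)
The claim is: an \'etale, generically open morphism $f:\mathscr{X}\to\mathscr{Y}$ of analytic stacks is an open embedding. Being an open embedding is equivalent to being \'etale and a monomorphism (equivalently, \'etale and universally injective on points, with the diagonal $\Delta_f$ an isomorphism). Since $f$ is already assumed \'etale, the whole content is to upgrade ``generically open'' to ``injective/monomorphism'' everywhere. The strategy is to reduce to a statement about complex manifolds via a smooth atlas, and then run a connectedness/propagation argument using the open dense locus where $f$ is already known to be an embedding.

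\textbf{Step 1: Reduce to the diagonal.} First I would recall that $f$ is an open embedding iff $\Delta_f:\mathscr{X}\to\mathscr{X}\times_{\mathscr{Y}}\mathscr{X}$ is an isomorphism (then $f$ is a monomorphism) together with $f$ \'etale (which gives openness of the image). So it suffices to show $\Delta_f$ is an isomorphism. Since $f$ is \'etale, $\Delta_f$ is an open embedding (the diagonal of an \'etale morphism is \'etale, and it is always a monomorphism, hence an open embedding); thus I only need to show $\Delta_f$ is \emph{surjective}, i.e. that $\mathscr{X}\times_{\mathscr{Y}}\mathscr{X}=\mathscr{X}$.

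\textbf{Step 2: Pass to an atlas and work with manifolds.} Choose a smooth surjection $Y\to\mathscr{Y}$ from a complex manifold, and then a smooth surjection $X\to\mathscr{X}\times_{\mathscr{Y}}Y$ from a complex manifold; the composite $X\to\mathscr{X}$ is a smooth surjective atlas, and $X\to Y$ is \'etale (base change of the \'etale $f$ along a smooth map, composed with the smooth atlas — more precisely I would arrange $X\to Y$ \'etale by first atlasing $\mathscr{X}$ and pulling back). The generic openness of $f$ pulls back: there is an open dense $\mathscr{U}\subset\mathscr{X}$ with $f|_{\mathscr{U}}$ an open embedding, so $U:=\mathscr{U}\times_{\mathscr{X}}X$ is open dense in $X$ and $U\to Y$ is an open embedding. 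Now $\mathscr{X}\times_{\mathscr{Y}}\mathscr{X}=\mathscr{X}$ can be checked after base change to the atlas, so it reduces to: the \'etale map $g:X\to Y$ of complex manifolds, which restricts to an open embedding on a dense open $U\subset X$, identifies any two points of a fiber — equivalently, the relation $R:=X\times_Y X\subset X\times X$ equals the diagonal.

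\textbf{Step 3: Connectedness/propagation.} The map $R\to X$ (first projection) is \'etale, $R$ contains the diagonal $\Delta_X$ as an open-and-closed-in-each-fiber component, and over $U$ we have $R|_U=\Delta_U$ because $g|_U$ is injective. The diagonal $\Delta_X\subset R$ is open (diagonal of \'etale is open embedding) and I must show it is also closed, hence a union of connected components, and that it is everything. Closedness: take a point $r=(x_1,x_2)$ in the closure of $\Delta_X$ in $R$; then $g(x_1)=g(x_2)$ and $x_1,x_2$ are limits of a common sequence $x_n$ with $(x_n,x_n)\to r$, forcing $x_1=x_2$ by Hausdorffness, so $\Delta_X$ is closed in $R$. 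Thus $\Delta_X$ is a union of connected components of $R$. It remains to see $R$ has no other components: any component $R'$ of $R$ with $R'\cap\Delta_X=\emptyset$ would have $p_1(R')$ open in $X$ (as $p_1$ is \'etale) and disjoint from... — here is where I use density of $U$: if $R'$ surjects onto a nonempty open set of $X$, that open set meets $U$, but over $U$ the only component is the diagonal, contradiction. Hence $R=\Delta_X$, which is the desired statement after Step 2, and tracing back through Steps 1–2 gives that $f$ is an open embedding.

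\textbf{Main obstacle.} The delicate point is Step 3: making the ``no extra components'' argument rigorous, in particular controlling how components of $R$ map to $X$ and ensuring $p_1(R')$ is a nonempty open subset (one needs $R'\neq\emptyset$ to surject onto an open set, using that $p_1$ is \'etale hence open, and then that a nonempty open subset of a manifold — or of each connected component of $X$ — meets the dense $U$). One must also be slightly careful that ``generically open'' via an \emph{open dense} substack pulls back to an open dense subset of the atlas, and that density is inherited under the smooth surjections chosen; this is where the smoothness (flatness + openness) of the atlas maps is used. Everything else (diagonal of \'etale is an open embedding, checking isomorphism of stacks on a smooth atlas, Hausdorffness of complex manifolds) is standard.
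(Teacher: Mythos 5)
Your Steps 1–2 (reduce to showing the diagonal $\Delta_f$ is an isomorphism, note $\Delta_f$ is automatically an open embedding when $f$ is \'etale, then base change along a smooth atlas $Y\to\mathscr{Y}$ so that the question becomes one about an \'etale map $g:X\to Y$ of complex spaces with $g|_U$ an open embedding on a dense open $U\subset X$) are the right reduction, and the only route I see. One simplification: you do not need a second atlas for $\mathscr{X}$ — since $f$ is representable and \'etale, $X:=\mathscr{X}\times_{\mathscr{Y}}Y$ is already a complex space and $X\to Y$ is \'etale. The paper states this lemma without proof, so there is nothing to compare against; I assess the argument on its own.

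Step 3 has a genuine gap, exactly where you flag the delicacy. From $g|_U$ injective you only know $U\times_Y U=\Delta_U$ inside $R:=X\times_Y X$, i.e.\ the relation is trivial on pairs \emph{both} of whose entries lie in $U$. But the final sentence invokes the stronger claim that ``over $U$ the only component is the diagonal'', i.e.\ $R\cap p_1^{-1}(U)=\Delta_U$: that there is no $(u,x)\in R$ with $u\in U$, $x\notin U$, $u\neq x$. This does \emph{not} follow formally from $U\times_Y U=\Delta_U$, so knowing that $p_1(R')$ meets $U$ does not by itself yield a contradiction. The repair is a local argument at a hypothetical off-diagonal point: if $(x_1,x_2)\in R$ with $x_1\neq x_2$, use \'etaleness of $g$ and Hausdorffness of the complex space $X$ to choose disjoint opens $V_1\ni x_1$, $V_2\ni x_2$ and a common open $W\ni g(x_1)=g(x_2)$ with $g|_{V_i}:V_i\to W$ isomorphisms; then $\phi:=(g|_{V_1})^{-1}\circ g|_{V_2}:V_2\to V_1$ is a biholomorphism, both $U\cap V_2$ and $\phi^{-1}(U\cap V_1)$ are dense open in $V_2$ and hence meet, and any $u_2$ in the intersection gives $u_1:=\phi(u_2)\in U\cap V_1$ with $u_1\neq u_2$ (as $V_1\cap V_2=\emptyset$) and $g(u_1)=g(u_2)$, contradicting injectivity of $g|_U$. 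Once this is in place, the component bookkeeping is superfluous: one gets $R\cap p_1^{-1}(U)=\Delta_U$, which is open dense in $R$, while $\Delta_X$ is closed in $R$ by Hausdorffness, so $R=\overline{\Delta_U}\subset\Delta_X$ directly.
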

\begin{proof}
Suppose $f: X \to Y$ is generically open and \'etale, but not an open embedding. Then there are $x,x' \in X$ mapping to the same point $y$. By \'etaleness, there is a small ball $U$ near $y$, so that $f^{-1}(U) \to U$ contains a double cover. Therefore $f$ can't be generically open. Contradiction.
\end{proof}

\begin{no}
\label{isoclassofgroupoid}
For a groupoid $\mathscr{C}$, let $|\mathscr{C}|$ be the set of isomorphism classes in $\mathscr{C}$. For a stack $\mathscr{X}$, Denote $|\mathscr{X}|:=|\mathscr{X}(\mathbb{C})|$. It has a natural topology coming from (representable) \'etale morphisms.
\end{no}

Let $x \in Ob({\mathscr{X}(\mathbb{C}))}$, we say $f$ is $\textit{\'etale at x}$ if for any base change $Y \rightarrow \mathscr{Y}$, $f': X:=\mathscr{X} \times_\mathscr{Y} Y \rightarrow Y$ is \'etale at any point $x' \in X$ over $x$. By definition, \'etaleness only depends on isomorphism class of $x$, so we can also speak about $f$ being \'etale at $\underline{\smash{x}} \in |\mathscr{X}|$. The locus in $|\mathscr{X}|$ where $f$ is \'etale is open. And $f$ is \'etale if and only if it is \'etale at every $|\mathscr{X}|$.

\subsection{Tangent groupoid and tangent complex}
$T_x \mathscr{X}$ the tangent groupoid at $x$ is defined to be the fiber category of $\sigma: \mathscr{X}(\mathbb{C}[\epsilon]) \rightarrow \mathscr{X}(\mathbb{C})$  over $x$, 
i.e. $Ob(T_x \mathscr{X}):=\{(v,\phi): v \in \mathscr{X}(\mathbb{C} [\epsilon]),  \phi: \sigma(v) \xrightarrow {\sim} x \}$, and morphism are those induces identity on $x$.  There is an action of $Aut(x)$ on $T_x \mathscr{X}$ via $(v,\phi) \mapsto (v, g \circ \phi)$
We have natural map of groupoid $df_x: T_x \mathscr{X} \rightarrow T_{f(x)} \mathscr{Y}$ by post-composition with $f$. The map intertwine the action of $Aut(x)$ and $Aut(f(x)). $ By base change, we have 

\begin{prop}
Assume $\mathscr{X,Y}$ smooth analytic stacks, then $f$ is \'etale at $x$ if and only if $df_x$ is an equivalence. 
\end{prop}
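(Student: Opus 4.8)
The plan is to reduce the assertion, which is pointwise and compatible with smooth base change, to the holomorphic inverse (rank) theorem for complex manifolds, by passing to a pair of smooth atlases. Throughout I use that the tangent complex $T_x\mathscr X$ of a smooth analytic stack sits in degrees $-1,0$, with $H^{-1}(T_x\mathscr X)=\mathrm{Lie}\,\mathrm{Aut}(x)$ and $H^0(T_x\mathscr X)$ the ``coarse'' tangent space, so that a morphism of tangent groupoids is an equivalence iff it is an isomorphism on both $H^{-1}$ and $H^0$.

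First I would reduce the target. Étaleness of $f$ at $x$ is tested, by definition, after base change along an arbitrary morphism from a complex space, and since étaleness is local for smooth surjections on the target it suffices to test it after base change along one smooth surjection $Y\to\mathscr Y$ with $Y$ a complex manifold and $f(x)$ in its image. Writing $\mathscr X':=\mathscr X\times_{\mathscr Y}Y$ (again a smooth analytic stack, as $\mathscr X$ and $Y\to\mathscr Y$ are smooth) and $x'\in\mathscr X'$ over $x$, one has $T_{x'}\mathscr X'\simeq T_x\mathscr X\times_{T_{f(x)}\mathscr Y}T_{f(x)}Y$ because the functor $(-)(\mathbb C[\epsilon])$ preserves fibre products; hence $df_{x'}$, being the projection to $T_{f(x)}Y$, is an equivalence iff $df_x$ is. So I may assume $\mathscr Y=Y$ is a complex manifold, in which case $T_{f(x)}Y$ sits in degree $0$ and $df_x$ is an equivalence iff $\mathrm{Lie}\,\mathrm{Aut}(x)=0$ and $H^0(df_x)\colon H^0(T_x\mathscr X)\xrightarrow{\ \sim\ }T_{f(x)}Y$.

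Next I would choose a smooth surjection $\pi\colon U\to\mathscr X$ with $U$ a complex manifold and $u\mapsto x$, set $g:=f\circ\pi\colon U\to Y$, and let $R:=U\times_{\mathscr X}U\rightrightarrows U$ with structure maps $s,t$, unit $e$, and a point $r$ over $(u,u)$. This groupoid presents $T_x\mathscr X$ as the two-term complex $[\,T_rR\xrightarrow{\ dt-ds\ }T_uU\,]$, so $\mathrm{Lie}\,\mathrm{Aut}(x)=\ker(ds)_r\cap\ker(dt)_r$ and $H^0(T_x\mathscr X)=\operatorname{coker}(dt-ds)$. Since $gs=gt$, the map $dg_u$ kills $(dt-ds)(T_rR)$ and induces $H^0(df_x)$; since $\pi$ is a smooth surjection $ds_r$ is onto, the unit gives $\ker(dt-ds)_r\supseteq\operatorname{im}(de_u)\cong T_uU$, with equality iff $\ker(ds)_r\cap\ker(dt)_r=0$, and $\dim\ker(ds)_r$ is the relative dimension of $\pi$ at $u$. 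A finite-dimensional linear-algebra chase with these facts shows that the conjunction ``$\mathrm{Lie}\,\mathrm{Aut}(x)=0$ and $H^0(df_x)$ an isomorphism'' is equivalent to ``$dg_u$ is surjective and $\dim\ker dg_u$ equals the relative dimension of $\pi$ at $u$''. By the holomorphic rank theorem the latter says that near $u$ the map $g$ is a submersion whose fibres have the same dimension as those of $\pi$; as smoothness and relative dimension descend along the smooth surjection $\pi$, this is precisely that $f$ is smooth of relative dimension $0$ near $x$, i.e. étale at $x$. Undoing the target reduction (étale is smooth-local on the target) gives both implications.

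The routine but delicate point is this last chase: identifying the atlas groupoid $R\rightrightarrows U$ with the tangent complex, keeping track of the automorphism-torsor subtlety when comparing the $\pi$-fibre at $u$ with $\ker(ds)_r$, and arranging the linear algebra so the two sets of conditions coincide. An alternative that sidesteps all of it is to invoke the relative cotangent complex $L_{\mathscr X/\mathscr Y}$, which for smooth analytic stacks is perfect of amplitude $[-1,1]$: $df_x$ is an equivalence iff $L_{\mathscr X/\mathscr Y}|_x\simeq 0$, and by derived Nakayama this forces $L_{\mathscr X/\mathscr Y}$, hence $f$, to be étale in a neighbourhood of $x$; the converse is immediate.
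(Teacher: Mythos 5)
The reduction in your first paragraph — base change along a smooth atlas $Y\to\mathscr Y$, using that $T_{x'}\mathscr X'\simeq T_x\mathscr X\times_{T_{f(x)}\mathscr Y}T_yY$ and that the projection out of a homotopy fibre product to one leg is an equivalence iff the other leg is — is exactly the content behind the paper's terse ``By base change'' and is fine. But at that point you should have stopped: ``$f$ \'etale at $x$'' is only defined in the paper when $f$ is representable, and representability of $f$ forces $\mathscr X'=\mathscr X\times_{\mathscr Y}Y$ to be a complex space (a complex manifold, by smoothness). After the target reduction one is therefore looking at a holomorphic map of complex manifolds and the statement \emph{is} the classical holomorphic inverse function theorem. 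The subsequent atlas-and-groupoid argument is an unnecessary detour.

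It is also a detour with a real gap. Your ``linear-algebra chase'' asserts that the conjunction ``$\mathrm{Lie}\,\mathrm{Aut}(x)=0$ and $H^0(df_x)$ an isomorphism'' is equivalent to ``$dg_u$ surjective and $\dim\ker dg_u=\dim\ker ds_r$''. That equivalence is false. One always has $dt_r(\ker ds_r)\subseteq\ker dg_u$, and $\dim dt_r(\ker ds_r)=\dim\ker ds_r-\dim(\ker ds_r\cap\ker dt_r)$; the equality $\dim\ker dg_u=\dim\ker ds_r$ therefore only yields $\dim dt_r(\ker ds_r)\le\dim\ker dg_u$ and does not force $\ker ds_r\cap\ker dt_r=0$. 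Concretely, at the linear level take $T_rR=\CC^3$, $T_uU=\CC^2$, $T_yY=\CC$, with $ds_r=dt_r:(a,b,c)\mapsto(a,b)$ and $dg_u:(a,b)\mapsto b$: then $dg_u$ is onto and $\dim\ker dg_u=1=\dim\ker ds_r$, so your condition (II) holds, yet $\mathrm{Lie}\,\mathrm{Aut}(x)=\ker ds_r\cap\ker dt_r=\CC\neq 0$ and $H^0(df_x)$ has nontrivial kernel. So the claimed implication from your two numerical conditions back to the equivalence of $df_x$ does not go through; the correct atlas criterion must explicitly include $\ker ds_r\cap\ker dt_r=0$ and $\ker dg_u=dt_r(\ker ds_r)$, which is just re-stating that $df_x$ is an equivalence. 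A smaller but related imprecision: the two-term complex $[T_rR\xrightarrow{dt-ds}T_uU]$ does not present $T_x\mathscr X$ — its $H^{-1}$ is $\ker(dt-ds)=\operatorname{im}(de_u)\oplus(\ker ds_r\cap\ker dt_r)$, which has an extra $T_uU$ summand; the correct presentation is $[\ker(ds_r)\xrightarrow{dt}T_uU]$. (You do then quote the right formula for $\mathrm{Lie}\,\mathrm{Aut}(x)$, but it does not follow from the complex you wrote.) The cotangent-complex alternative you mention at the end is sound in spirit, but it invokes $L_{\mathscr X/\mathscr Y}$ and derived Nakayama, machinery the paper does not set up for analytic stacks; the intended argument is really just the target base change plus representability plus the classical inverse function theorem.
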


Where an analytic stack is \textit{smooth} if it has an atlas $(Z,f)$, such that $Z$ is a complex manifold.
For smooth analytic stack $\mathscr{X}$, the tangent groupoid $T_x \mathscr{X}$ has a natural structure of category in vector spaces such that the commutativity constraint is trivial. Such datum is equivalent to complex of vector spaces in degree $-1,0$. The assignment is by associate such a category $\mathscr{C}$ to $H^{-1} \rightarrow H^{0}$ where the differential is trivial and $H^0(\mathscr{C})$:=isomorphism classes of objects in $\mathscr{C}$, and $H^{-1}(\mathscr{C})$ := the automorphisms of identity object. Note that both of them have vector space structures. Under this assignment, $Aut(x)$ acts linearly on $H^i(T_x \mathscr{X})$, and $df_x$ induces an linear map between $H^i$ 's and it is an isomorphism if and only if the original functor between groupoids is an equivalence. We have $H^{-1}(T_x \mathscr{X})=Lie(Aut(x))$, and the action of $Aut(x)$ is conjugation.

\begin{eg}
Let $\mathscr{X}=X/G$ be the quotient stack,
$T\mathscr{X}$ is represented by the complex $\mathfrak{g} \otimes_{\mathbb{C}} \mathscr{O}_X \rightarrow TX$,
for $x \in X$, let $\bar{x}$ be the image of $x$ in $\mathscr{X}$, then $T_{\bar{x}}\mathscr{X}$ is quasi-isomorphic to the complex $ \mathfrak{g} \rightarrow T_xX$,
where the arrow is $H \mapsto \frac{d}{dt}|_{t=0}\exp(tH)x$. The action of $Aut(\bar{x})= C_G(x)$  on  $T_{\bar{x}}\mathscr{X} = \{ \mathfrak{g} \xrightarrow{\delta} T_xX \}$ can be identified as:
\begin{itemize}
	\item conjugation on $\mathfrak{g}$;
	\item $g \in Aut(\bar{x}), v=\gamma'(0) \in T_xX,$ for some curve $\gamma(t)$ through $x$. We have $g \cdot v := \frac{d}{dt}|_{t=0}g\gamma(t)$.
\end{itemize}
 Then $\delta$ is an $Aut(\bar{x})$ module map.
Indeed, $\delta(ad_g(H))=\frac{d}{dt}|_{t=0}\exp(t \cdot ad_g(H))x= \frac{d}{dt}|_{t=0}g\exp(tH)g^{-1}x= \frac{d}{dt}|_{t=0}g\exp(tH)x =g \cdot \delta(H),$ where $g^{-1}x=x$ because $g \in C_G(x).$ 
\end{eg}

\subsection{Sheaves on analytic stacks} 
\label{sheaves}
We shall work in the framework of \cite{Lur2,Lur}. Let $k$ be a commutative ring of finite global dimension. Denote by $Sh(X)=Sh(X,k)$ the $\oo$-category of sheaves of $k$-modules on $X$. And for $\mathscr{Y}$ a prestack, define $Sh(\mathscr{Y}):= \lim_{X \in \Cplx_{/\mathscr{Y}}} Sh(X)$. We have the following smooth descent:

\begin{thm}
	\label{cechcover}
	Let $F$ be an analytic stack, $(Z,f)$ an atlas of $\mathscr{Y}$. Denote $Z^\bullet_\mathscr{Y}: \Delta^{op} \to \Stk$ the \v{C}ech nerve of $f$. Then:
	\begin{enumerate}
		\item  $\colim_{\Delta^{op}} Z^\bullet_\mathscr{Y} \simeq \mathscr{Y}$ in $\Stk$.
		\item $\lim_{\Delta} Sh(Z^\bullet_\mathscr{Y}) \simeq Sh(\mathscr{Y}) $ in $\Cat_\oo$.
	\end{enumerate}	
\end{thm}

\begin{thm}
	\label{descent}
	Let $\mathscr{I}$ be an $\oo$-category, and $X: \mathscr{I}^\triangleright \to \mathscr{S}\textup{tk}$ be a functor, such that 
	\begin{enumerate}[label=(\roman*)]
		\item all objects go to analytic stacks;
		\item all arrows go to representable \'etale morphisms;
		\item the induced functor on $\C$-points $X(\C):\mathscr{I}^\triangleright \to \mathscr{S}$ is a colimit diagram.
	\end{enumerate}	
	Then
	\begin{enumerate}
		\item $X$ is a colimit diagram. 
		\item   the induced functor on sheaves $Sh_{X}: \mathscr{I}^{\triangleright,op} \to \Cat_{\infty} $, defined by $I \mapsto {Sh}(X(I))$, is a limit diagram.
	\end{enumerate}
\end{thm}	

\begin{proof}
	By Theorem~\ref{cechcover}, we can assume $X$ takes value in $\Cplx$.  \\
(2) For $I \in \mathscr{I}$, put $f_I: X(I) \to X(*)$. For any $F \in Sh(X(*))$ and $x \in X(*) $ , we have  $(f_{I!}f_i^* F)_x \simeq \coprod_{f_I^{-1}(x)} F_x$ by the \'etaleness of $f_I$.  For fully-faithfulness: by next Lemma, it is equivalent to show that the natural map $\colim_{i \in \mathscr{I}} f_{I!}f_I^*(F) \to F$ is an isomorphism for any $F \in Sh(X(*))$. This can be checked on stalks: for any $x \in X$,  we have  $(f_{I!}f_I^* F)_x \simeq \coprod_{f_I^{-1}(x)} F_x$ by the \'etaleness of $f_I$, hence $(\colim_{I \in \mathscr{I}} f_{I!}f_I^*(F))_x \simeq \colim_{I \in \mathscr{I}} (f_{I!}f_I^*(F))_x \simeq \colim_{I \in \mathscr{I}} \coprod_{f_I^{-1}(x)} F_x \simeq F_x$ because $ \colim_{I \in \mathscr{I}} \coprod_{f_I^{-1}(x)} \simeq \{x\}$ by assumption. For essential surjectivity: let $F_I \in Sh(X(I)),  I \in \mathscr{I}$ be a compatible system, put $F := \colim_{I \in \mathscr{I}} f_{I!} F_I$. Define the $\oo$-category $X_x:=\{(I,x_I)\;|\; x_I \in f_I^{-1}(x)  \}$,  then $F_x =\colim_{I \in \mathscr{I}} \coprod_{x_I \in f_I^{-1}(x)} {F_I}_{x_I} =  \colim_{X_x} {F_I}_{x_I} = \colim_{|X_x|}{F_I}_{x_I} $, the last equality is because for any $(I,x_I) \to (J,x_J)$, the natural map ${F_J}_{x_J} \to {F_I}_{x_I},$ is an isomorphism. Now $|X_x| \simeq *$ by assumption. This induces the isomorphism ${F_I}_{x_I}  \to F_x \simeq  f_I^* F_{x_I}$, hence $f_I^* F \simeq F_I$. \\
	(1) We see that the functor $X$ factors as 
$\xymatrix{\mathscr{I}^{\triangleright}\ar[r]^-{X'} &  Sh(X(*),\mathscr{S}) \ar[r]^-i & \Stk}$, where $X'(I) =f_{I!}f_I^*(X(*))$. The functor $i$ preserves colimit, so it is suffices to show $X'$ is a colimit diagram. This follows the argument in (2), since $X \simeq \colim \; f_{I!}f_I^*(X)$ in $Sh(X(*),\mathscr{S})$.
\end{proof}	

\begin{lem}
Let $C:\mathscr{I}^{\mathscr{\triangleright},op} \to \Cat_\oo$ be a functor, such that for any $a: I \to J$ in $\mathscr{I}$, the left adjoint of $C(a)$ exist. Put $g_I:=C(I \to *),$ and $f_I$ its left adjoint. Then the following are equivalent:
\begin{enumerate}
	\item The induced functor $C(*) \to \lim_{I \in \mathscr{I}} C(I)$ is fully faithful.
	\item The natural transformation $\epsilon: \colim_{i \in \mathscr{I}} f_I  g_I \rightarrow Id$ is an isomorphism.
\end{enumerate}	
\begin{proof} The limit $\oo$-category $\lim_{I \in \mathscr{I}} C(I)$ consists of objects: compatible system $\{c_I \in C(I)\}_{I \in \mathscr{I}}$, and morphisms: $\Hom({c_I},{d_I})=\lim \Hom(c_I,d_I)$. 
	For $x,y \in C(*),$ we have $\Hom(x,y) \to \lim_{i \in \mathscr{I}}  \Hom(g_i(x),g_i(y)) \simeq   \Hom ( \colim_{i \in \mathscr{I}}f_i g_i(x),y)$. Then both (1) and (2) are equivalent to the above arrow being an equivalence.
	\end{proof}
\end{lem}	

\begin{rmk}
	\begin{enumerate}
  \item (Descent for hypercovering)  \'Etale hypercoverings satisfy the assumption of Theorem~\ref{descent}. In fact, a functor $U^{\bullet,\triangleright}: \Delta^{op,\triangleright} \to \Stk $ is an \'etale hypercovering of an analytic stack if and only if it satisfies (i)-(iii) and the additional assumption:
   (iv) the simplicial set $U^{\bullet,\triangleright}(\C)|_{\Delta^{op}}$ is a Kan complex.
	\item Let $\{U_s | s \in S\}$ be a finite open cover of an analytic stack $X$. Let  $\mathscr{P}(S) (\resp \mathscr{P}^0(S))$ be the category of (\resp nonempty) subset of $S$.
		Then the functor $\bU: \mathscr{P}(S)^{op}=\mathscr{P}^0(S)^{op,\triangleright} \to \Stk,$ via $I \to \cap_{s \in I} U_s$ satisfies assumption.
	\end{enumerate}
\end{rmk}

\section{Semisimple and semistable bundles} \label{ssbundles}

In this section, we collected some basic facts about $G$-bundles on elliptic curves. Some references are \cite{BG,FM1,FM2}.
\begin{defn}
 Let $\mathcal{P}$ be a $G$ bundle on a compact Riemann surface $C$. \\
$\mathcal{P}$  is of \textit{degree 0} if it lies in the neutral component $\Bun^0_G(C)$ of $\Bun_G(C)$\\
$\mathcal{P}$  is \textit{semisimple} if $\mathcal{P}$ has reduction to a maximal torus $T$.\\
Let $C=E$ be an elliptic curve.\\
$\mathcal{P}$  is \textit{semistable} if the associated adjoint bundle $\mathfrak{g}_{\mathcal{P}}$ is semistable.
\end{defn}
It's easy to see that semisimple semistable $G$ bundles of degree 0 are exactly those in the image of the map $\Bun^0_T(C) \rightarrow \Bun^0_G(C)$.
Let $G_E:=\Bun^{0,ss}_G(E)$ be the stack of degree 0 semistable $G$ bundles on $E$. And $\fre_E$ be the coarse moduli space of degree 0 semistable $G$ bundles. There is a (non-representable) maps between stacks $\chi_E: G_E \rightarrow \fre_E$, by taking a bundle to its S-equivalence class.
We have $\fre_E \simeq  (\Pic^0(E) \otimes X_*(T))//W$. 
 There is a commutative diagram:
$$ \xymatrix{ T_E/W \ar[r]^i \ar[rd]^{\pi}& G_E \ar[d]^{\chi_E} \\
	&      \fre_E }$$
where $\pi: T_E/W  \to \fre_E$ by forgetting the automorphisms.
Let $\fre_E^{reg} =(\Pic^0(E) \otimes X_*(T))^{reg}//W \subset \fre_E,$ 
where $(\Pic^0(E) \otimes X_*(T))^{reg} \subset (\Pic^0(E) \otimes X_*(T))$ is the open dense locus where the $W$ action is free.
 Take $G^{reg}_E:=\chi^{-1}_E(\fre_E^{reg})$ and $T_E^{reg}/W:=\pi^{-1}(\fre_E^{reg})$, then we have an isomorphism of analytic stacks $T_E^{reg}/W \xrightarrow{\sim} G^{reg}_E.$ 

\begin{prop}
	\label{semisimpleabundant}
	For any $c \in \fre_E$, the subset of semisimple bundles is abundant in $|\chi_E^{-1}(c)|$.
\end{prop}
\begin{proof} Take any $\mathcal{P} \in |\chi_E^{-1}(c)|$, by \cite{BZN2}, there exist a $B$-reduction $\mathcal{P}_B$ of degree 0. Now by \cite[Proposition 3.5]{Ram96}, there exist a family of $G$ bundles on $E$, whose generic fiber is $\mathcal{P}$ and special fiber is $\mathcal{P}^{ss}:=\mathcal{P}_B \times_B T \times_T G$. Hence $\mathcal{P}^{ss} \in \overline{\{\mathcal{P}\}}$, note also that $\mathcal{P}^{ss} \in |\chi^{-1}_E(c)|$.
\end{proof}

Let $\Bun^{0,ss}_{G,x}(E)$ be the moduli stack of degree 0 semistable $G$-bundles on $E$ with trivialization at $x \in E$. We have:

\begin{prop} \label{representable}
	$\Bun^{0,ss}_{G,x}(E)$ is representable by a complex manifold.
\end{prop}
\begin{proof} Consider $\Bun^{0,ss}_{G,x}(E)^{\textup{alg}}$ the smooth algebraic stack of degree $0$ semistable $G$ bundles with trivialization at $x$. For any $G$-bundles $\mathcal{P} $, the natural map $Aut(\mathcal{P}) \rightarrow Aut(\mathcal{P}_x)$ is injective, by Atiyah's classification of vector bundles over elliptic curves. Hence $\Bun^{0,ss}_{G,x}(E)^{\textup{alg}}$ takes values in $\Set \subset \mathscr{S}$, therefore it is representable by a smooth algebraic space. And $\Bun^{0,ss}_{G,x}(E)$ is analytification of  $\Bun^{0,ss}_{G,x}(E)^{\textup{alg}}$, hence it is representable by a complex manifold. 
\end{proof}

\end{appendices}

\newpage
\bibliographystyle{alpha}
\bibliography{paper}

\end{document}